\theoremstyle{plain}
\newtheorem{theorem}{Theorem}[section]
\newtheorem{proposition}[theorem]{Proposition}
\newtheorem{corollary}[theorem]{Corollary}
\newtheorem{conjecture}[theorem]{Conjecture}
\newtheorem{lemma}[theorem]{Lemma}
\theoremstyle{definition}
\newtheorem{definition}[theorem]{Definition}
\theoremstyle{remark}
\newtheorem{remark}[theorem]{Remark}
\newtheorem{example}[theorem]{Example}
\numberwithin{equation}{section}
\def\C{\mathbb C}
\def\N{\mathbb N}
\def\R{\mathbb R}
\def\Z{\mathbb Z}
\newcommand{\coker}{\mathop{\mathrm{coker}}\nolimits}
\newcommand{\colim}{\mathop{\mathrm{colim}}\nolimits}
\newcommand{\cone}{\mathop{\mathrm{cone}}\nolimits}
\newcommand{\map}{\mathop{\mathrm{map}}\nolimits}
\newcommand{\diag}{\mathop{\mathrm{diag}}\nolimits}
\newcommand{\im}{\mathop{\mathrm{im}}\nolimits}
\newcommand{\ind}{\mathop{\mathrm{ind}}\nolimits}
\newcommand{\res}{\mathop{\mathrm{res}}\nolimits}
\newcommand{\tr}{\mathop{\mathrm{tr}}\nolimits}
\def\d{\, \textup{d}}
\def\id{\textup{id}}
\def\lcm{\textup{lcm}}
\def\NZD{\textup{NZD}}
\def\pr{\textup{pr}}
\def\Wh{\textup{Wh}}
\title{$L^2$-Alexander invariant for knots}
\author{J\'er\^ome Dubois}
\email{dubois@math.jussieu.fr}
\address{Institut de Math\'ematiques de Jussieu -- Paris Rive Gauche, Universit\'e Paris Diderot--Paris 7, UFR de Math\'ematiques, B\^atiment Sophie Germain Case 7012, 75205 Paris Cedex 13, France}
\author{Christian Wegner}
\email{wegner@him.uni-bonn.de}
\address{Hausdorff Research Institute for Mathematics (HIM), Universit\"at Bonn, Poppelsdorfer Allee 45, 53115 Bonn, Germany}
\subjclass[2000]{57M25}
\keywords{$L^2$-torsion, Knot invariants, Alexander invariant}
\begin{document}

\maketitle

\begin{abstract}
This paper deals with the study of a new family of knot invariants: the $L^2$-Alexander invariant. A main result is to give a method of computation of the $L^2$-Alexander invariant of a knot complement using any presentation of default 1 of the knot group.
\end{abstract}

\section{Introduction}

The Alexander polynomial is the first polynomial knot invariant in the history of knot theory. It was introduced by J.~W.~Alexander~\cite{Alexander} in 1928 and appears now in many different flavors: using the Fox calculus, using Seifert matrices, using skein relations, etc. In the sixties, J.~Milnor~\cite{Milnor:1962} gave a spectacular interpretation of the (usual) Alexander polynomial as a kind of Reidemeister torsion, an abelian one, and proved again some deep properties such as symmetric property of the Alexander polynomial.

In a completely different area, $L^2$-invariants was introduced by M.~Atiyah in~\cite{A} where he used von Neumann's concept of ``continuous dimension".

The Milnor--Reidemeister torsion is defined using matrices, i.e. operators with finite spectrum, and using the usual notion of determinant. The analytic Ray--Singer torsion is defined for operators with (infinite) discrete spectrum, and it is well--known that for closed three--dimensional manifolds analytic and Reidemeister torsion are equal by the celebrated  theorem of Cheeger--M\"uller (see~\cite{Ch, Mul}). For certain operators whose spectrum is no more discrete but continuous, the notion of $L^2$-torsion has been introduced around 15 years ago by Carey--Mathai, Lott, L\"uck--Rothenberg, Novikov--Shubin (see in particular L\"uck's monograph~\cite{Luc02} for a complete history of the story). One of the most significant results in this field is that the $L^2$-torsion of a hyperbolic three--dimensional manifold is proportional to the hyperbolic volume of the manifold (in fact equal up to a factor $-\frac{1}{6\pi}$). This fundamental result is due to W. L\"uck and T. Schick~\cite{LucSch99}.

As a generalization of the Milnor--Reidemeister torsion, the notion of twisted Alexander polynomial has been introduced in the nineties by X.-S. Lin, and next generalized and studied by many authors: one could refer to the excellent survey by Friedl and Vidussi~\cite{FV} for a complete bibliography. In 2006, Li and Zhang~\cite{LZ06a} introduced the notion of $L^2$-Alexander invariants for knots, which is a sort of Alexander type invariant (twisted in an abelian way) but using in its definition the Fuglede--Kadison determinant instead of the usual determinant.
Further observe that the $L^2$-Alexander  invariant of a knot evaluated at $t = 1$ is precisely the $L^2$-torsion of the knot complement.

One of the main result of this paper is to give a method to compute the $L^2$-Alexander invariant of a knot using any presentation of default 1 of its group and the associated Fox matrix (see Theorem~\ref{mainthm}).
This result can be considered as an $L^2$-version of a well--known method to compute the usual Alexander polynomial by using Fox differential calculus (see for example \cite[Theorem 9.10]{BurdeZieschang}).
The idea of the proof is to relate the $L^2$-Alexander invariant of a knot to a generalization of the $L^2$-torsion --- the so--called \emph{weighted} $L^2$-torsion --- of the universal covering of the knot complement. A detailed study of the weighted $L^2$-torsion yields Theorem~\ref{mainthm}. As an application we explicitly compute the $L^2$-Alexander invariant for torus knots.
A strategy to gain further information about the $L^2$-Alexander invariant is to use a geometrical toral splitting of the knot complement along disjoint incompressible 2-sided tori into Seifert and hyperbolic pieces and to study the weighted $L^2$-torsion of these pieces (see Proposition~\ref{prop_pieces} and Remark~\ref{rem_pieces}).

The paper is organized as follows. Section~\ref{S:background} deals with the needed backgrounds on $L^2$-invariants, we especially give a precise definition of the Fuglede-Kadison determinant. In Section~\ref{S:Alexander}, we define the $L^2$-Alexander invariant which can be considered as a $L^2$-version of the usual Alexander invariant and state our main result: Theorem~\ref{mainthm}. The proof of our main theorem uses the notion of weighted $L^2$-invariants, this notion is introduced in Section~\ref{sec_weighted_L2-invariants} and applied to knot complements in Section~\ref{sec_weighted_L2-torsion}. In Section~\ref{S:computations}, we explicitly compute the $L^2$-Alexander invariant for torus knots. Finally in the Appendix we have put together the detailed study of the weighted $L^2$-invariants.

\section{Background on $L^2$-invariants}
\label{S:background}

In this section, we give a short introduction to $L^2$-invariants. We define $L^2$-Betti numbers, Novikov--Shubin invariants and $L^2$-torsion of finite CW-complexes. For more information on $L^2$-invariants we refer to L\"uck's book \cite{Luc02}.

We start with some elementary definitions. For a discrete group $G$, the Hilbert space $l^2(G)$ is defined as the completion of the complex group ring ${\C}G$ with respect to the inner product
\[
\langle \sum_{g \in G} c_g \cdot g \, , \, \sum_{g \in G} d_g \cdot g \rangle := \sum_{g \in G} \overline{c_g} \cdot d_g.
\]
The von Neumann algebra ${\mathcal N}(G)$ is the algebra of all bounded linear endomorphisms of $l^2(G)$ that commute with the left $G$-action.
The trace of an element $\phi \in {\mathcal N}(G)$ is defined by $\tr_{{\mathcal N}(G)}(\phi) := \langle \phi(e) \, , \, e \rangle$ where $e \in \C G \subset l^2(G)$ denotes the unit element. We can extend this trace to $n \times n$-matrices over ${\mathcal N}(G)$ by considering the sum of the traces of the entries on the diagonal.

\begin{definition}[Hilbert ${\mathcal N}(G)$-module]
A \emph{finitely generated Hilbert ${\mathcal N}(G)$-module} $V$ is a Hilbert space with a linear left $G$-action such that there exists a $\C G$-linear embedding of the Hilbert space into an orthogonal direct sum of a finite number of copies of $l^2(G)$.
A \emph{morphism of Hilbert ${\mathcal N}(G)$-modules} is a bounded $G$-equivariant operator. A \emph{weak isomorphism} is a morphism which is injective and has dense image.
\end{definition}

By the following definition we can assign to a finitely generated Hilbert ${\mathcal N}(G)$-module a dimension which satisfies faithfulness, monotony, continuity and weak exactness (see \cite[Theorem 1.12]{Luc02}).

\begin{definition}[von Neumann dimension]
Let $V$ be a finitely generated Hilbert ${\mathcal N}(G)$-module.
Choose any orthogonal $G$-equivariant projection $\pr \colon l^2(G)^n \to l^2(G)^n$ whose image is isometrically $G$-isomorphic to $V$.
The \emph{von Neumann dimension} of $V$ is given by
\[
\dim_{{\mathcal N}(G)}(V) := \tr_{{\mathcal N}(G)}(\pr) \in [0,\infty).
\]
\end{definition}

Let $X$ be a finite connected CW-complex with fundamental group $G$. Then we obtain a Hilbert ${\mathcal N}(G)$-chain complex $C^{(2)}_*({\tilde X}) := l^2(G) \otimes_{\Z G} C_*({\tilde X})$, where $C_*({\tilde X})$ is the cellular chain complex of the universal covering of $X$. Notice that $C_n({\tilde X})$ is a finite free $\Z G$-module with basis given by a cellular structure of $X$.

\begin{definition}[$L^2$-homology, $L^2$-Betti number]
We define the \emph{$n$-th (reduced) $L^2$-homology} and the \emph{$n$-th $L^2$-Betti number} of a finite connected CW-complex $X$ by
\begin{eqnarray*}
H_n^{(2)}({\tilde X}) &:=& \ker(c_n^{(2)}({\tilde X})) / \overline{\im(c_{n+1}^{(2)}({\tilde X}))},\\
b_n^{(2)}({\tilde X}) &:=& \dim_{{\mathcal N}(G)}(H_n^{(2)}({\tilde X})).
\end{eqnarray*}
\end{definition}

The basic properties of the $L^2$-Betti numbers like homotopy invariance, Euler-Poincar{´e} formula or multiplicativity under finite coverings are described in \cite[Theorem 1.35]{Luc02}.

For the definition of the Novikov--Shubin invariants and the $L^2$-torsion we need \emph{spectral density functions}.
The spectral density function of a morphism of finitely generated Hilbert ${\mathcal N}(G)$-modules $f\colon U \to V$ is given by
\[
F(f)\colon \R \to [0,\infty), \lambda \mapsto \dim_{{\mathcal N}(G)}\left(\im(E_{\lambda^2}^{f^*f})\right)
\]
where $\{E_\lambda^{f^*f}\colon U \to U \mid \lambda \in \R\}$ denotes the family of spectral projections of the positive endomorphism $f^*f$.
The spectral density function is monotonous and right-continuous. It defines a measure on the Borel $\sigma$-algebra on $\R$ which is uniquely determined by
\[
dF(f)((a,b]) := F(f)(b)-F(f)(a) \mbox{ for } a<b.
\]

\begin{definition}[Novikov--Shubin invariant]
Let $X$ be a finite CW-complex with fundamental group $G$. We define its \emph{Novikov--Shubin invariants} by
\[
\alpha_n({\tilde X}) := \liminf_{\lambda \to 0^+} \frac{\ln(F(c_n^{(2)}({\tilde X}),\lambda)-F(c_n^{(2)}({\tilde X}),0))}{\ln(\lambda)} \in [0,\infty],
\]
if $F(c_n^{(2)}({\tilde X}),\lambda) > F(c_n^{(2)}({\tilde X}),0)$ holds for all $\lambda > 0$. Otherwise we set $\alpha_n({\tilde X}) := \infty^+$ where
$\infty^+$ is a new formal symbol.
\end{definition}

For the basic properties of the Novikov--Shubin invariants like homotopy invariance or invariance under finite coverings we refer to \cite[Theorem 2.55]{Luc02}.

\begin{definition}[Fuglede--Kadison determinant]
Let $f\colon U \to V$ be a morphism of finitely generated Hilbert ${\mathcal N}(G)$-modules. We define the \emph{Fuglede--Kadison determinant} of $f$ by
\[
{\textstyle\det}_{{\mathcal N}(G)}(f) := \exp(\int_{0^+}^\infty \ln(\lambda) \, dF(f)(\lambda))
\]
if $\int_{0^+}^\infty \ln(\lambda) \, dF(f)(\lambda) > -\infty$ and by $\det_{{\mathcal N}(G)}(f) := 0$ otherwise.
\end{definition}

We use this determinant to define the $L^2$-torsion. Some properties of this determinant can be found in \cite[Theorem 3.14]{Luc02}.

\begin{definition}[$L^2$-torsion]
Let $X$ be a finite CW-complex with fundamental group $G$. Suppose that $b_n^{(2)}({\tilde X}) = 0$ and $\det_{{\mathcal N}(G)}(c_n^{(2)}({\tilde X})) > 0$ for all $n$. We define its \emph{$L^2$-torsion} by
\[
\rho^{(2)}({\tilde X}) := - \sum_{n \geq 0} (-1)^n \cdot \ln({\textstyle\det}_{{\mathcal N}(G)}(c_n^{(2)}({\tilde X}))) \in \R.
\]
\end{definition}

The basic properties of the $L^2$-torsion like homotopy invariance or sum formula are described in \cite[Theorem 3.96]{Luc02}.

\section{$L^2$-Alexander invariant for knots}
\label{S:Alexander}

In 2006, Li and Zhang \cite{LZ06a} introduced the notion of $L^2$-Alexander invariant for knots, which is a sort of (twisted) Alexander type invariant but using in its definition the Fuglede-Kadison determinant instead of the usual determinant. We first recall the definition of this $L^2$-Alexander invariant.

Let $K \subset S^3$ be a knot and consider a Wirtinger presentation
\[
P = \big\langle g_1, \ldots, g_k \, \big| \, r_1, \ldots, r_{k-1} \big\rangle
\]
of the knot group $\Gamma = \pi_1(M_K)$, where $M_K = S^3 \setminus V(K)$ denotes the knot exterior.

We have a group homomorphism $\phi \colon \Gamma \to \Z$ given by $g_i \mapsto 1$.
For $t \in \C^*$ we obtain a ring homomorphism
\[
\psi_t \colon \C \Gamma \to \C \Gamma, \sum_{g \in \Gamma} c_g \cdot g \mapsto \sum_{g \in \Gamma} c_g \cdot t^{\phi(g)} \cdot g.
\]
Let $F_j$ ($1 \leq j \leq k$) be the matrix obtained from the Fox matrix $F = \left(\partial r_i / \partial g_l\right)$ by removing its $j$th column.
We obtain a matrix $\psi_t(F_j) \in M((k-1) \times (k-1); \C \Gamma)$ by applying $\psi_t$ entry-wise to the matrix $F_j$.

The \emph{$L^2$-Alexander invariant $\Delta^{(2)}_{K,P}(t)$ of the knot $K$ with respect to the Wirtinger presentation $P$} is defined as the Fuglede--Kadison determinant
\[
\Delta^{(2)}_{K,P}(t) = {\det}_{{\mathcal N}(\Gamma)} \big( r^{(2)}_{\psi_t(F_1)} \colon l^2(\Gamma)^{k-1} \to l^2(\Gamma)^{k-1} \big) \in [0,\infty)
\]
where the map $r^{(2)}_{\psi_t(F_1)}$ is given by right multiplication with the matrix $\psi_t(F_1)$. In the definition we make the following hypothesis:
 \[
(\bullet)\; \text{the map }r^{(2)}_{\psi_t(F_1)} \text{ is injective and } {\det}_{{\mathcal N}(\Gamma)}( r^{(2)}_{\psi_t(F_1)}) > 0.
 \]

 \begin{remark}Hypothesis $(\bullet)$ is technical, here are some remarks about it.
 \begin{enumerate}
  \item Observe that the first hypothesis in $(\bullet)$, $r^{(2)}_{\psi_t(F_1)}$ is injective holds, if and only if some $L^2$-Betti numbers (the weighed $L^2$-Betti numbers defined in Section~\ref{sec_weighted_L2-invariants}) of the universal cover of the knot complement vanish (see in particular Proposition~\ref{prop_Alex-weighted}).
  \item In the case of torus knot, we prove in Section~\ref{S:computations} that hypothesis $(\bullet)$ holds.
  \item For the special values $|t| = 1$, the $L^2$-Alexander invariant is defined and studied in~\cite[Sections 3,5,6]{LZ06a}, whereas~\cite[Section 7]{LZ06a} deals with the general case.
  \item By \cite[Lemma 3.1]{LZ06a} (which also holds for $|t| \neq 1$) one knows that if $r^{(2)}_{\psi_t(F_j)}$ is injective for some $j$ then it is injective for all $j$. In this case $\det_{{\mathcal N}(\Gamma)} (r^{(2)}_{\psi_t(F_j)})$ does not depend on $j$.
\end{enumerate}
\end{remark}

\begin{proposition} \label{prop_|t|}
We have
\[
\Delta^{(2)}_{K,P}(t) = \Delta^{(2)}_{K,P}(|t|)
\]
for all $t \in \C^*$.
\end{proposition}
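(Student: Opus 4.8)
The plan is to absorb the phase of $t$ into a conjugation by a suitable unitary operator on $l^2(\Gamma)^{k-1}$ which, although it fails to be $\Gamma$-equivariant, is harmless for the spectral density function. Write $t = |t| \cdot u$ with $u := t/|t|$, so that $|u| = 1$.

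First I would record the elementary identity $\psi_t = \psi_u \circ \psi_{|t|}$, immediate from $\psi_s \circ \psi_{s'} = \psi_{ss'}$; applied entrywise it gives $\psi_t(F_1) = \psi_u\big(\psi_{|t|}(F_1)\big)$. Next, using $|u| = 1$, I would introduce the operator $u_\phi \colon l^2(\Gamma) \to l^2(\Gamma)$ determined on the standard Hilbert basis by $g \mapsto u^{\phi(g)} \cdot g$; since $|u^{\phi(g)}| = 1$ this is a unitary operator. A short computation on basis elements gives $u_\phi \circ r^{(2)}_a \circ u_\phi^{-1} = r^{(2)}_{\psi_u(a)}$ for every $a \in \C\Gamma$, and hence, letting $U_\phi := u_\phi^{\oplus(k-1)}$ act diagonally on $l^2(\Gamma)^{k-1}$,
\[
r^{(2)}_{\psi_t(F_1)} \;=\; U_\phi \circ r^{(2)}_{\psi_{|t|}(F_1)} \circ U_\phi^{-1}.
\]
Writing $f := r^{(2)}_{\psi_t(F_1)}$ and $g := r^{(2)}_{\psi_{|t|}(F_1)}$, unitarity of $U_\phi$ gives $f^* f = U_\phi (g^* g) U_\phi^{-1}$, so the spectral projections satisfy $E_{\lambda^2}^{f^*f} = U_\phi \, E_{\lambda^2}^{g^*g} \, U_\phi^{-1}$ for all $\lambda$.

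The crucial point is that conjugation by $U_\phi$ preserves the relevant traces. Both $f^*f$ and $g^*g$, being right multiplication operators by matrices over $\C\Gamma$, lie in $M_{k-1}({\mathcal N}(\Gamma))$, hence so do all their spectral projections; in particular $E_{\lambda^2}^{f^*f} = U_\phi E_{\lambda^2}^{g^*g} U_\phi^{-1} \in M_{k-1}({\mathcal N}(\Gamma))$. Since $\phi(e) = 0$ we have $u_\phi(e) = e$, so for any $P \in M_{k-1}({\mathcal N}(\Gamma))$ with $U_\phi P U_\phi^{-1} \in M_{k-1}({\mathcal N}(\Gamma))$ one computes $\langle U_\phi P U_\phi^{-1}(e_i), e_i\rangle = \langle P(e_i), e_i\rangle$ for each standard generator $e_i$, whence $\tr_{{\mathcal N}(\Gamma)}\big(U_\phi P U_\phi^{-1}\big) = \tr_{{\mathcal N}(\Gamma)}(P)$. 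Applying this to $P = E_{\lambda^2}^{g^*g}$ yields
\[
F(f)(\lambda) = \tr_{{\mathcal N}(\Gamma)}\big(E_{\lambda^2}^{f^*f}\big) = \tr_{{\mathcal N}(\Gamma)}\big(E_{\lambda^2}^{g^*g}\big) = F(g)(\lambda) \qquad \text{for all } \lambda,
\]
so the spectral density functions, and therefore the measures $dF(f) = dF(g)$, coincide. From this everything follows: $f$ is injective iff $g$ is (each is equivalent to $F(\cdot)(0) = 0$, using faithfulness of $\dim_{{\mathcal N}(\Gamma)}$ applied to the closed $\Gamma$-invariant subspaces $\ker f$, $\ker g$), so hypothesis $(\bullet)$ holds for $t$ precisely when it holds for $|t|$; and then ${\det}_{{\mathcal N}(\Gamma)}(f) = {\det}_{{\mathcal N}(\Gamma)}(g)$ directly from the defining integral formula, i.e. $\Delta^{(2)}_{K,P}(t) = \Delta^{(2)}_{K,P}(|t|)$.

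I expect the main obstacle to be the claim that conjugation by the non-$\Gamma$-equivariant unitary $U_\phi$ preserves the von Neumann trace: the naive attempt to implement $\psi_u$ by a $\Gamma$-equivariant unitary fails, and one must instead exploit that $\tr_{{\mathcal N}(\Gamma)}$ reads off only the coefficient of the neutral element, together with the fact that $u_\phi$ fixes $e$. Everything else — the factorization $\psi_t = \psi_u \circ \psi_{|t|}$, the conjugation identity $u_\phi \circ r^{(2)}_a \circ u_\phi^{-1} = r^{(2)}_{\psi_u(a)}$, and the bookkeeping with spectral projections and the Fuglede--Kadison determinant — is routine.
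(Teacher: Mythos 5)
Your proposal is correct and follows essentially the same route as the paper: the paper's proof introduces exactly the ring homomorphism $\eta_c(g) = c^{\phi(g)}\cdot g$ with $c = t/|t|$ (your $u_\phi$), extends it to a unitary on $l^2(\Gamma)$, and concludes that conjugation by $\diag(\eta_c)$ preserves the spectral density function and hence the Fuglede--Kadison determinant. Your additional care in justifying why conjugation by this non-$\Gamma$-equivariant unitary still preserves $\tr_{{\mathcal N}(\Gamma)}$ (namely that $u_\phi$ fixes $e$) fills in a step the paper leaves implicit, but it is the same argument.
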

\begin{proof}
For $c \in U(1)$ the assignment $g_i \mapsto c \cdot g_i$ extends uniquely to a ring homomorphism $\eta_c \colon \C\Gamma \to \C\Gamma$.
Furthermore, this extends to an isometry $\eta_c \colon l^2(\Gamma) \to l^2(\Gamma)$.
Notice that $\psi_t(F_j) = \eta_c(\psi_{|t|}(F_j))$ with $c := t / |t|$.
We conclude
\[
\Delta^{(2)}_{K,P}(t) = {\det}_{{\mathcal N}(\Gamma)} \big( r^{(2)}_{\eta_c \big( \psi_{|t|}(F_j) \big)} \big) = {\det}_{{\mathcal N}(\Gamma)} \big( \diag(\eta_c) \circ r^{(2)}_{\psi_{|t|}(F_j)} \circ \diag(\eta_c^{-1}) \big).
\]
Since $\diag(\eta_c)$ is unitary, $\diag(\eta_c) \circ r^{(2)}_{\psi_{|t|}(F_j)} \circ \diag(\eta_c^{-1})$ and $r^{(2)}_{\psi_{|t|}(F_j)}$ have the same spectral density function. Hence
\[
{\det}_{{\mathcal N}(\Gamma)} \big( \diag(\eta_c) \circ r^{(2)}_{\psi_{|t|}(F_j)} \circ \diag(\eta_c^{-1}) \big) = {\det}_{{\mathcal N}(\Gamma)} \big( r^{(2)}_{\psi_{|t|}(F_j)} \big) = \Delta^{(2)}_{K,P}(|t|).
\]
This shows $\Delta^{(2)}_{K,P}(t) = \Delta^{(2)}_{K,P}(|t|)$.
(See \cite[Theorem 6.1]{LZ06a} for another proof of this statement for the case $|t| = 1$.)
\end{proof}

The next result --- due to Li and Zhang~\cite{LZ06a} --- ensures that the Fuglede--Kadison determinant does not depend on the choice of the Wirtinger presentation.

\begin{proposition}
Let $P$ and $P'$ be Wirtinger presentations of the knot group of $K$. We denote the associated Fox matrices by $F$ and $F'$ respectively.
If $r^{(2)}_{\psi_t(F_1)}$ is injective, then $r^{(2)}_{\psi_t(F'_1)}$ is injective and there exists $p \in \Z$ such that
\[
\Delta^{(2)}_{K,P}(t) = \Delta^{(2)}_{K,P'}(t) \cdot |t|^p
\]
\end{proposition}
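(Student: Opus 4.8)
The plan is to connect $P$ to $P'$ by a finite chain of Tietze transformations that never leaves the class of deficiency-one presentations, and to compute the effect of each move on $\Delta^{(2)}_{K,P}(t)$ using the multiplicativity and normalisation properties of the Fuglede--Kadison determinant recorded in \cite[Theorem 3.14]{Luc02}. The elementary input is that, after reducing modulo the relations and then applying $\psi_t$ (a ring homomorphism), Fox derivatives simplify: $\partial(r_ir_j^{\pm1})/\partial g_l$ becomes $\partial r_i/\partial g_l\pm\partial r_j/\partial g_l$ and $\partial(wr_iw^{-1})/\partial g_l$ becomes $w\cdot\partial r_i/\partial g_l$, since every correction term containing $\partial w/\partial g_l$ cancels once $r_i$ maps to $1$. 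Hence: \emph{(i)} permuting or inverting generators and relators composes $r^{(2)}_{\psi_t(F_1)}$ with signed permutation or diagonal unit matrices, whose $\det_{{\mathcal N}(\Gamma)}$ is a power of $|t|$ (and, by the last remark above, i.e.\ \cite[Lemma 3.1]{LZ06a}, the choice of deleted column is immaterial once injectivity holds); \emph{(ii)} replacing $r_i$ by $r_j^{\pm1}r_i$ or $r_ir_j^{\pm1}$ precomposes $r^{(2)}_{\psi_t(F_1)}$ with the $r^{(2)}$ of an elementary matrix over $\C\Gamma$, an isomorphism of Hilbert ${\mathcal N}(\Gamma)$-modules with determinant $1$, so $\Delta^{(2)}$ is unchanged; \emph{(iii)} replacing $r_i$ by $wr_iw^{-1}$ precomposes with $r^{(2)}_{\diag(1,\ldots,t^{\phi(w)}w,\ldots,1)}$, of determinant $|t|^{\phi(w)}$; \emph{(iv)} an elementary expansion or collapse (adjoining a generator $g_{k+1}$ together with the relator $g_{k+1}w(g_1,\ldots,g_k)^{-1}$, or its inverse) makes $\psi_t(F'_j)$ block lower-triangular with diagonal blocks $\psi_t(F_j)$ and $(1)$, so by multiplicativity $\Delta^{(2)}$ is unchanged. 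In each case injectivity of $r^{(2)}_{\psi_t(F_1)}$ is inherited by the new presentation (composition with an isomorphism, resp.\ passage to a block-triangular map with an invertible diagonal block), so the quantities stay defined throughout and the cumulative effect along the chain is one factor $|t|^p$ with $p\in\Z$.

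This reduces the proposition to the combinatorial assertion that any two Wirtinger presentations of the knot group of $K$ are joined by a finite sequence of moves of the types \emph{(i)--(iv)} that never raises the deficiency, so that $\Delta^{(2)}_{K,P}(t)$ remains defined at every stage; this is where I expect the only genuine work (bookkeeping) to lie. It is classical: by Reidemeister's theorem two diagrams of $K$ are related by planar isotopies and the moves $\Omega_1,\Omega_2,\Omega_3$, and a direct (if tedious) inspection shows that each such local move changes the associated Wirtinger presentation by a deficiency-preserving composition of moves \emph{(i)--(iv)} --- $\Omega_1$ being an elementary expansion followed by a relator conjugation, and $\Omega_2,\Omega_3$ being finite compositions of relator conjugations and multiplications, permutations, and a single expansion/collapse; compare \cite[Chapter~3]{BurdeZieschang}.

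Structurally the result is transparent: $-\ln\Delta^{(2)}_{K,P}(t)$ is, up to the contribution of $\partial_1^{\psi_t}$ and of the deleted column, the $\psi_t$-twisted $L^2$-torsion of the universal cover of the presentation $2$-complex $X_P$, whose boundary $\partial_2$ is right multiplication by the Fox matrix $F$. Since $X_P$ is a spine of the knot exterior $M_K$ and $\Wh(\Gamma)=1$ by Waldhausen, the complexes $X_P$ and $X_{P'}$ are simple-homotopy equivalent, hence have equal twisted $L^2$-torsion; the remaining discrepancy --- the $\partial_1^{\psi_t}$-term and the column deletion, which involve only meridional generators, on which $\phi$ takes the value $1$ --- is then a power of $|t|$. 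Making this rigorous is precisely an application of the simple-homotopy invariance of the weighted $L^2$-torsion developed in Sections~\ref{sec_weighted_L2-invariants}--\ref{sec_weighted_L2-torsion} and the Appendix, and gives an alternative, more conceptual proof.
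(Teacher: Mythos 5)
Your primary argument --- relating the two Wirtinger presentations by deficiency-preserving Tietze/Reidemeister moves and tracking each move's effect on $r^{(2)}_{\psi_t(F_1)}$ via the multiplicativity, block-triangularity and scaling properties of the Fuglede--Kadison determinant --- is correct and is essentially the proof the paper invokes, since the paper simply defers to \cite[Proposition 3.4]{LZ06a}, whose argument is exactly this bookkeeping. Your closing remark is also apt: the ``more conceptual'' route via simple-homotopy invariance of the weighted $L^2$-torsion and $\Wh(\Gamma)=1$ is precisely the machinery the paper itself develops in Proposition~\ref{prop_Alex-weighted} and Corollary~\ref{cor_Alex-torsion}.
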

The proof follows by examining the proof of \cite[Proposition 3.4]{LZ06a}.

The proposition above allows us to define the $L^2$-Alexander invariant $\Delta^{(2)}_K$ of the knot $K$.

\begin{definition}[$L^2$-Alexander invariant]\label{def-Alex}
Let $K$ be a knot. Suppose that one (and hence all) Wirtinger presentation $P$ of the knot group of $K$ has the property that for the associated Fox matrix $F$ and all $t \in \C^*$ the map $r^{(2)}_{\psi_t(F_1)}$ is injective with $\det_{{\mathcal N}(\Gamma)}(r^{(2)}_{\psi_t(F_1)}) > 0$.
Notice that $\{ t \mapsto |t|^p \mid p \in \Z \}$ is a subgroup of the multiplicative group $\map(\C^*,\R^{>0})$.
We define the \emph{$L^2$-Alexander invariant}
\[
\Delta^{(2)}_K \in \map(\C^*,\R^{>0}) / \{ t \mapsto |t|^p \mid p \in \Z \}
\]
by $t \mapsto \Delta^{(2)}_{K,P}(t)$.
\end{definition}

Notice that Proposition \ref{prop_|t|} implies $\Delta^{(2)}_K(t) = \Delta^{(2)}_K(|t|)$, where $\Delta^{(2)}_K(t)$ and $\Delta^{(2)}_K(|t|)$ are considered as elements in $\map(\C^*,\R^{>0}) / \{ t \mapsto |t|^p \mid p \in \Z \}$.

For some knots (e.g. the trefoil knot) there exist such simple Wirtinger presentations that one can directly calculate the $L^2$-Alexander invariant from the definition. But in general it is difficult to determine the $L^2$-Alexander invariant.
We are mostly interested in torus knots. The knot group of the torus knot of type $(p,q)$ admits the very simple well--known presentation with two generators and a single relation: $P' = \{ x, y \, | \, {x}^p = {y}^q \}$ (see~\cite{BurdeZieschang}). But unfortunately, this is not a Wirtinger presentation. Nevertheless, the following result gives us a method to compute the Fuglede--Kadison determinant.

\begin{theorem}\label{mainthm}
Let $K$ be a knot in $S^3$ whose knot group is denoted as $\Gamma$. Let $P$ be a Wirtinger presentation with associated maps $\phi \colon \Gamma \to \Z$ and $\psi_t \colon \C \Gamma \to \C \Gamma$.
Let $P'$ be a further default 1 presentation of $\Gamma$ (not necessarily a Wirtinger presentation) with associated Fox matrix $F'$. Suppose there exists $j$ such that $r^{(2)}_{\psi_t(F'_j)}$ is injective and $\det_{{\mathcal N}(\Gamma)}(r^{(2)}_{\psi_t(F'_j)}) > 0$ for all $t \in \C^*$.
Then the Wirtinger presentation $P$ satisfies the assumption of Definition~\ref{def-Alex} and
\begin{equation}\label{EQ:mainthm}
\Delta^{(2)}_K(t) =  {\det}_{{\mathcal N}(\Gamma)}(r^{(2)}_{\psi_{|t|}(F'_j)}) \cdot \max\{ |t| , 1 \}^{1 - \phi(g'_j)}.
\end{equation}
\end{theorem}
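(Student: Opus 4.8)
The plan is to compute $\Delta^{(2)}_K(t)$ by interpreting it as (an explicit rescaling of) the weighted $L^2$-torsion of $\widetilde{M_K}$, and then to evaluate that torsion from the auxiliary presentation $P'$. By Proposition~\ref{prop_|t|} it suffices to treat real parameters, so write $t=|t|>0$ throughout. To $P'$ (respectively the Wirtinger presentation $P$) associate its presentation $2$-complex $X_{P'}$ with one $0$-cell, one $1$-cell per generator $g'_i$ and one $2$-cell per relator; its cellular $\Z\Gamma$-chain complex is $0\to(\Z\Gamma)^{k'-1}\xrightarrow{\ \partial_2\ }(\Z\Gamma)^{k'}\xrightarrow{\ \partial_1\ }\Z\Gamma\to0$, where $\partial_2$ is right multiplication by the Fox matrix $F'$ and $\partial_1$ is right multiplication by the column $(g'_i-1)_i$. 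Applying $\psi_t$ entrywise and tensoring with $l^2(\Gamma)$ yields the weighted $L^2$-chain complex $C^{(2)}_*(\widetilde{X_{P'}};t)$ studied in Sections~\ref{sec_weighted_L2-invariants}--\ref{sec_weighted_L2-torsion}, whose $L^2$-torsion (when defined) I denote $\rho^{(2)}(\widetilde{X_{P'}};t)$. The first ingredient is Proposition~\ref{prop_Alex-weighted}, which identifies $\Delta^{(2)}_{K,P}(t)$ with $\exp\bigl(-\rho^{(2)}(\widetilde{M_K};t)\bigr)$ up to the factor $\max\{t,1\}$ coming from $\det_{\mathcal N(\Gamma)}(r^{(2)}_{tg_i-1})=\max\{t,1\}$ (each Wirtinger generator satisfies $\phi(g_i)=1$), and which reformulates hypothesis $(\bullet)$ for $P$ as a presentation-independent vanishing statement for the weighted $L^2$-Betti numbers of $\widetilde{M_K}$.

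The second ingredient is a direct computation of $\rho^{(2)}(\widetilde{X_{P'}};t)$ from $F'$ under the hypothesis that $r^{(2)}_{\psi_t(F'_j)}$ is injective with $\det_{\mathcal N(\Gamma)}(r^{(2)}_{\psi_t(F'_j)})>0$ for all $t$. Since $\ker r^{(2)}_{\psi_t(F')}\subseteq\ker r^{(2)}_{\psi_t(F'_j)}=0$ (removing a column only enlarges the kernel), the top weighted $L^2$-homology of $\widetilde{X_{P'}}$ vanishes; the degree-$0$ one vanishes because some $\phi(g'_i)\neq0$ forces $r^{(2)}_{t^{\phi(g'_i)}g'_i-1}$ to be a weak isomorphism for $t\neq1$, while for $t=1$ one uses that $\Gamma$ is infinite; and the degree-$1$ one then vanishes by the Euler--Poincar\'e identity $b^{(2),t}_0-b^{(2),t}_1+b^{(2),t}_2=\chi(X_{P'})=0$. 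Hence $C^{(2)}_*(\widetilde{X_{P'}};t)$ is weakly acyclic, and positivity of all differential determinants follows from monotonicity of the Fuglede--Kadison determinant, each $\partial_i\partial_i^*$ dominating a one-coordinate block of known positive determinant. To evaluate the torsion I would split $(\Z\Gamma)^{k'}=(\Z\Gamma)^{k'-1}\oplus\Z\Gamma$ along the $j$-th coordinate: the first component of $\psi_t(\partial_2)$ is $r^{(2)}_{\psi_t(F'_j)}$, a weak isomorphism, and the restriction of $\psi_t(\partial_1)$ to the $j$-th summand is $r^{(2)}_{t^{\phi(g'_j)}g'_j-1}$, a weak isomorphism with determinant $\max\{|t|,1\}^{\phi(g'_j)}$ (the classical $S^1$-type computation $\det_{\mathcal N(\Gamma)}(r^{(2)}_{t^n g-1})=\max\{|t^n|,1\}$ for $g$ of infinite order). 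Building a chain contraction out of the inverses of these two maps, and using the Fox identity $\partial_1\partial_2=0$ to see that the surviving off-diagonal block is nilpotent-triangular, hence of Fuglede--Kadison determinant $1$, gives
\[
\rho^{(2)}\bigl(\widetilde{X_{P'}};t\bigr)=\ln{\det}_{\mathcal N(\Gamma)}\bigl(r^{(2)}_{t^{\phi(g'_j)}g'_j-1}\bigr)-\ln{\det}_{\mathcal N(\Gamma)}\bigl(r^{(2)}_{\psi_t(F'_j)}\bigr)=\phi(g'_j)\ln\max\{|t|,1\}-\ln{\det}_{\mathcal N(\Gamma)}\bigl(r^{(2)}_{\psi_t(F'_j)}\bigr),
\]
and the same computation for $X_P$ gives $\rho^{(2)}(\widetilde{X_P};t)=\ln\max\{|t|,1\}-\ln\Delta^{(2)}_{K,P}(t)$.

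It remains to identify $\rho^{(2)}(\widetilde{X_{P'}};t)$ with $\rho^{(2)}(\widetilde{M_K};t)$, equivalently with $\rho^{(2)}(\widetilde{X_P};t)$, up to an additive term in $\Z\cdot\ln|t|$. A map $X_{P'}\to M_K$ inducing the identity on $\Gamma$ gives a $\Z\Gamma$-chain map whose algebraic mapping cone, after the weighting, is weakly acyclic under our hypothesis — both weighted $L^2$-homologies vanish for all $t$ by the previous paragraph and by Proposition~\ref{prop_Alex-weighted} — and has positive differential determinants. The sum formula for weighted $L^2$-torsion then gives $\rho^{(2)}(\widetilde{M_K};t)=\rho^{(2)}(\widetilde{X_{P'}};t)+\rho^{(2)}(\text{mapping cone};t)$, and the last term is the weighted $L^2$-torsion of a homotopy equivalence, i.e. the image of a Whitehead torsion $\tau\in\Wh(\Gamma)$ under $A\mapsto\ln\det_{\mathcal N(\Gamma)}(\psi_t(A))$; since $\det_{\mathcal N(\Gamma)}(\psi_t(\pm g))=|t|^{\phi(g)}$ this image is well-defined modulo $\Z\cdot\ln|t|$. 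Feeding the two displayed computations into this comparison, and recalling $\Delta^{(2)}_{K,P}(t)=\Delta^{(2)}_{K,P}(|t|)$, yields in $\map(\C^*,\R^{>0})/\{t\mapsto|t|^p\}$ the identity $\Delta^{(2)}_K(t)=\det_{\mathcal N(\Gamma)}(r^{(2)}_{\psi_{|t|}(F'_j)})\cdot\max\{|t|,1\}^{1-\phi(g'_j)}$, and simultaneously shows (via the presentation-independent reformulation of $(\bullet)$) that $P$ satisfies the hypotheses of Definition~\ref{def-Alex}.

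The main obstacle is precisely this last comparison step. Passing between two presentations of $\Gamma$ via Tietze transformations can create $L^2$-homology — an ``add a consequence relator'' move wedges on a copy of $\bigvee_\Gamma S^2$, making the intermediate weighted $L^2$-chain complexes non-acyclic and their torsion undefined — so one genuinely needs the ``for all $t$'' hypothesis to force the relevant mapping cone to be det-$L^2$-acyclic and to pin the indeterminacy down to $\Z\cdot\ln|t|$ rather than all of $\map(\C^*,\R^{>0})$; making this precise is what requires the systematic development of weighted $L^2$-invariants (weighted $L^2$-Betti numbers, determinant class, sum and homotopy-invariance formulas) carried out in the Appendix. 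A secondary bookkeeping nuisance is that $\det_{\mathcal N(\Gamma)}(r^{(2)}_{t^n g-1})=\max\{|t^n|,1\}$ agrees with $\max\{|t|,1\}^{n}$ only up to a factor $|t|^{\text{integer}}$, which is harmless here but must be tracked so as not to be mistaken for an essential discrepancy.
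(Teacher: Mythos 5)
Your overall route is the same as the paper's: interpret $\Delta^{(2)}_{K,P}(t)$ as a weighted $L^2$-torsion of the presentation $2$-complex, split off the $j$-th generator/column to reduce the torsion to $\ln\det_{{\mathcal N}(\Gamma)}(r^{(2)}_{t^{\phi(g'_j)}g'_j-1})-\ln\det_{{\mathcal N}(\Gamma)}(r^{(2)}_{\psi_t(F'_j)})$, and transfer the det-$L^2$-acyclicity hypothesis between $P'$ and $P$ through the presentation-independent statement about $\widetilde{M_K}$. All of that matches Proposition~\ref{prop_Alex-weighted} and Corollaries~\ref{cor_Alex-torsion}, \ref{cor_presentation}.

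However, there is a genuine gap at exactly the step you yourself single out as ``the main obstacle''. You compare $\rho^{(2)}(\widetilde{X_{P'}};t)$ with $\rho^{(2)}(\widetilde{M_K};t)$ via the mapping cone of a homotopy equivalence and identify the discrepancy as the image of a Whitehead class $\tau\in\Wh(\Gamma)$ under $M\mapsto\ln\det_{{\mathcal N}(\Gamma)}(r^{(2)}_{\psi_{|t|}(M)})$. You then assert that this pins the indeterminacy down to $\Z\cdot\ln|t|$. That does not follow: what the relation $\det_{{\mathcal N}(\Gamma)}(r^{(2)}_{\psi_t(\pm g)})=|t|^{\phi(g)}$ gives you is that the map $A_\varrho\colon\Wh(\Gamma)\to\map(\R,\R)/\{x\mapsto\tfrac{x}{2}\ln\varrho(g)\}$ is \emph{well-defined} modulo $\Z\cdot\ln|t|$, not that its \emph{values} lie in $\Z\cdot\ln|t|$. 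For a nontrivial $\tau$ represented by an invertible matrix $M$ over $\Z\Gamma$, the function $t\mapsto\ln\det_{{\mathcal N}(\Gamma)}(r^{(2)}_{\psi_{|t|}(M)})$ is a priori an arbitrary function of $|t|$, and your final identity would then only hold up to this unknown factor, which is strictly worse than the indeterminacy $\{t\mapsto|t|^p\}$ allowed in Definition~\ref{def-Alex}. The missing ingredient is Waldhausen's theorem that the Whitehead group of a knot group is trivial (\cite{Wal78}): this is what the paper invokes to conclude that the presentation complex is \emph{simply} homotopy equivalent to $M_K$, so that the correction term $A_\varrho(\tau(f))$ vanishes identically rather than merely being well-defined. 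With that citation added, your argument closes; without it, the comparison step is unproved. (Your remark that $\max\{|t|^{n},1\}$ and $\max\{|t|,1\}^{n}$ differ only by an integer power of $|t|$ is correct and harmless; the Whitehead issue is the real one.)
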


The proof is given in Section~\ref{sec_weighted_L2-torsion} (see Corollary~\ref{cor_presentation}). It is based on a systematical study of \emph{weighted $L^2$-invariants} which we introduce in the next section.

\section{Introduction to weighted $L^2$-invariants} \label{sec_weighted_L2-invariants}

In this section we introduce the invariants which we obtain by replacing $l^2(G)$ by another Hilbert space, the weighted Hilbert space $l^2(G,\varrho)$, where $\varrho$ is a representation of $G$ into the multiplicative group $\R^{>0}$. For more information about these invariants we refer to the appendix.

Let $\varrho \colon G \to \R^{>0}$ be a group homomorphism. We define an inner product on the complex group ring $\C G$ by
\[
\langle \sum_{g \in G} c_g \cdot g \, , \, \sum_{g \in G} d_g \cdot g \rangle_\varrho := \sum_{g \in G} c_g \cdot \overline{d_g} \cdot \varrho(g).
\]
The Hilbert space completion with respect to the inner product $\langle \, , \, \rangle_\varrho$ is denoted by $l^2(G,\varrho)$.
The complex group ring $\C G$ together with this inner product and the involution
\[
\big( \sum_{g \in G} c_g \cdot g \big)^* := \sum_{g \in G} c_g \cdot \varrho(g) \cdot g^{-1}
\]
satisfies the axioms of a unital Hilbert algebra, i.e.
\begin{enumerate}
 \item $(c \cdot d)^* = d^* \cdot c^*$,
 \item $\langle c \, , \, d \rangle_\varrho = \langle d^* \, , \, c^* \rangle_\varrho$,
 \item $\langle c \cdot d \, , \, e \rangle_\varrho = \langle d \, , \, c^* \cdot e \rangle_\varrho$,
 \item The map $r_c \colon \C G \to \C G$ given by right multiplication with $c \in \C G$ is continuous.
\end{enumerate}

We define the \emph{weighted von Neumann algebra} ${\mathcal N}(G,\varrho)$ as the algebra of all bounded linear endomorphisms of $l^2(G,\varrho)$ that commute with the left $G$-action.

The \emph{trace} of an element $\phi \in {\mathcal N}(G,\varrho)$ is defined by $\tr_{{\mathcal N}(G,\varrho)}(\phi) := \langle \phi(e) \, , \, e \rangle_\varrho$ where $e \in \C G \subset l^2(G,\varrho)$ denotes the unit element. We can extend this trace to $n \times n$-matrices over ${\mathcal N}(G,\varrho)$ by considering the sum of the traces of the entries on the diagonal.

Obviously, for $\varrho$ the trivial group homomorphism we have $l^2(G,\varrho) = l^2(G)$ and ${\mathcal N}(G,\varrho) = {\mathcal N}(G)$.
If $\varrho \neq 1$, it is sometimes useful to refer to the well-known case $l^2(G)$ resp. ${\mathcal N}(G)$.
Notice that the map
\[
\Phi_\varrho \colon \C G \to \C G, \sum_{g \in G} c_g \cdot g \mapsto \sum_{g \in G} c_g \cdot \sqrt{\varrho(g)} \cdot g
\]
induces an isometry $\Phi_\varrho \colon l^2(G,\varrho) \to l^2(G)$ and an isomorphism $\Phi_\varrho \colon {\mathcal N}(G,\varrho) \to {\mathcal N}(G)$.
We have $\tr_{{\mathcal N}(G,\varrho)}(\phi) = \tr_{{\mathcal N}(G)}(\Phi_\varrho(\phi))$.

\begin{definition}[Hilbert ${\mathcal N}(G,\varrho)$-module]
A \emph{finitely generated Hilbert ${\mathcal N}(G,\varrho)$-module} $V$ is a Hilbert space with a linear left $G$-action such that there exists a $\C G$-linear embedding of the Hilbert space into an orthogonal direct sum of a finite number of copies of $l^2(G,\varrho)$.
\end{definition}

\begin{definition}[von Neumann dimension]
Let $V$ be a finitely generated Hilbert ${\mathcal N}(G,\varrho)$-module.
Choose any orthogonal $G$-equivariant projection $\pr \colon l^2(G,\varrho)^n \to l^2(G,\varrho)^n$ whose image is isometrically $G$-isomorphic to $V$.
The \emph{von Neumann dimension} of $V$ is defined by
\[
\dim_{{\mathcal N}(G,\varrho)}(V) := \tr_{{\mathcal N}(G,\varrho)}(\pr) \in [0,\infty).
\]
\end{definition}

Let $X$ be a finite connected CW-complex with fundamental group $G$. We obtain a Hilbert ${\mathcal N}(G,\varrho)$-chain complex $C^{(2)}_{\varrho,*}({\tilde X}) := l^2(G,\varrho) \otimes_{\Z G} C^{cell}_*({\tilde X})$, where $C^{cell}_*({\tilde X})$ is the cellular chain complex of the universal covering of $X$.

\begin{definition}[weighted $L^2$-homology, weighted $L^2$-Betti number]
We define the \emph{$n$-th (reduced) $L^2$-homology} and the \emph{$n$-th $L^2$-Betti number} of a finite connected CW-complex $X$ by
\begin{eqnarray*}
H_{\varrho,n}^{(2)}({\tilde X}) &:=& \ker(c_{\varrho,n}^{(2)}({\tilde X})) / \overline{\im(c_{\varrho,n+1}^{(2)}({\tilde X}))},\\
b_{\varrho,n}^{(2)}({\tilde X}) &:=& \dim_{{\mathcal N}(G)}(H_{\varrho,n}^{(2)}({\tilde X})).
\end{eqnarray*}
\end{definition}

The authors conjecture that the weighted $L^2$-Betti numbers do not depend on the group homomorphism $\varrho$ (see Conjecture \ref{conj-Betti}). A proof for some groups $G$ is given in Proposition \ref{prop-Betti-independant}.

We are primarily interested in the weighted $L^2$-torsion. The definition is based on the weighted version of the Fuglede--Kadison determinant.

\begin{definition}[Fuglede--Kadison determinant]
Let $f\colon U \to V$ be a morphism of finitely generated Hilbert ${\mathcal N}(G,\varrho)$-modules, i.e. a bounded $G$-equivariant operator. Consider the spectral density function
\[
F(f)\colon \R \to [0,\infty), \lambda \mapsto \dim_{{\mathcal N}(G)}(\im(E_{\lambda^2}^{f^*f}))
\]
where $\{E_\lambda^{f^*f}\colon U \to U \mid \lambda \in \R\}$ denotes the family of spectral projections of the positive endomorphism $f^*f$.
The spectral density function is monotonous and right-continuous. It defines a measure on the Borel $\sigma$-algebra on $\R$ which is uniquely determined by
\[
dF(f)((a,b]) := F(f)(b)-F(f)(a) \mbox{ for } a<b.
\]
We define the \emph{Fuglede--Kadison determinant} of $f$ by
\[
{\textstyle\det}_{{\mathcal N}(G,\varrho)}(f) := \exp(\int_{0^+}^\infty \ln(\lambda) \, dF(f)(\lambda))
\]
if $\int_{0^+}^\infty \ln(\lambda) \, dF(f)(\lambda) > -\infty$ and by $\det_{{\mathcal N}(G)}(f) := 0$ otherwise.
\end{definition}

Let $X$ be a finite CW-complex with fundamental group $G$. In analogy to the classical $L^2$-torsion we would like to define its weighted $L^2$-torsion by
\[
- \sum_{n \geq 0} (-1)^n \cdot \ln({\textstyle\det}_{{\mathcal N}(G,\varrho)}(c_{\varrho,n}^{(2)}({\tilde X}))) \in \R.
\]
But this is not well-defined because it depends on the choice of a cellular basis for $C^{cell}_*({\tilde X})$.

\begin{definition}[weighted $L^2$-torsion]
Let $X$ be a finite CW-complex with fundamental group $G$ and let $\varrho \colon G \to \R^{>0}$ be a group homomorphism. For $x \in \R$ we set $\varrho^x \colon G \to \R^{>0}, g \mapsto \varrho(g)^x$.
Suppose that ${\tilde X}$ is det-$L^2$-acyclic with respect to $\varrho$, i.e. $b^{(2)}_{\varrho^x,n}({\tilde X}) = 0$ and $\det_{{\mathcal N}(G,\varrho^x)}(c^{(2)}_n(X)) > 0$ for all $n \in \Z$ and $x \in \R$.
We define the \emph{weighted $L^2$-torsion}
\[
\rho^{(2)}_\varrho({\tilde X}) \in \map(\R,\R) / \{ x \mapsto \frac{x}{2} \cdot \ln(\varrho(g)) \mid g \in G \}
\]
by $\rho^{(2)}_\varrho({\tilde X})(x) := - \displaystyle\sum_{n \geq 0} (-1)^n \cdot \ln\big({\textstyle\det}_{{\mathcal N}(G,\varrho^x)}(c_{\varrho^x,n}^{(2)}({\tilde X}))\big)$.
\end{definition}

The basic properties of the weighted $L^2$-torsion are listed in the appendix (see Proposition \ref{properties-torsion}).

In the next section we will see that the $L^2$-Alexander polynomial is determined by the weighted $L^2$-torsion of the universal covering of the knot complement. 

\section{Weighted $L^2$-torsion of knot complements} \label{sec_weighted_L2-torsion}

In this section we study the relation between the $L^2$-Alexander polynomial and the weighted $L^2$-torsion of the universal covering of the knot complement.

\begin{proposition} \label{prop_Alex-weighted}
Let $K$ be a knot and $P = \langle g_1, \ldots, g_k \, | \, r_1, \ldots, r_{k-1} \rangle$ a default 1 presentation of its knot group $\Gamma$ (not necessary a Wirtinger presentation).
Let $j \in \{1, \ldots, k\}$ and let $F_j$ be the matrix obtained from the Fox matrix $F = (\partial r_i / \partial g_l)$ by removing the $j$th column.
The universal covering $\widetilde{M_K}$ of the knot complement is det-$L^2$-acyclic w.r.t. $\varrho \colon \Gamma \to \R^{>0}$ if and only if $r^{(2)}_{F_j} \colon l^2(\Gamma,\varrho^x)^{k-1} \to l^2(\Gamma,\varrho^x)^{k-1}$ is injective and $\det_{{\mathcal N}(\Gamma,\varrho^x)}(r^{(2)}_{F_j}) > 0$ for all $x \in \R$.
In this case we have
\[
\rho^{(2)}_\varrho(\widetilde{M_K})(x) = - \ln {\det}_{{\mathcal N}(\Gamma,\varrho^x)}(r^{(2)}_{F_j}) + \max\{ \frac{x}{2} \cdot \ln(\varrho(g_j)) , 0 \}.
\]
\end{proposition}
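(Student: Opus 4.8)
The plan is to compute the weighted $L^2$-torsion directly from a convenient CW-structure on $M_K$ associated to the presentation $P$, and to identify the relevant determinants with those of the Fox matrix. First I would recall that a default $1$ presentation $P = \langle g_1, \ldots, g_k \mid r_1, \ldots, r_{k-1}\rangle$ gives a presentation $2$-complex $Y$ with one $0$-cell, $k$ $1$-cells (labelled $g_1, \ldots, g_k$) and $k-1$ $2$-cells (labelled $r_1, \ldots, r_{k-1}$), and that $Y$ is simple-homotopy equivalent (or at least $L^2$-torsion equivalent after stabilisation) to $M_K$; this is the standard fact used to compute the ordinary Alexander polynomial via Fox calculus, and since weighted $L^2$-torsion is a simple-homotopy invariant (Proposition~\ref{properties-torsion} in the appendix) it suffices to work with $Y$. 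The cellular $\Z\Gamma$-chain complex of $\widetilde Y$ is
\[
0 \to (\Z\Gamma)^{k-1} \xrightarrow{\ \partial_2\ } (\Z\Gamma)^{k} \xrightarrow{\ \partial_1\ } \Z\Gamma \to 0,
\]
where $\partial_2$ is right multiplication by the full Fox matrix $F = (\partial r_i/\partial g_l)$ and $\partial_1$ is the column vector $(g_l - 1)_l$ (Fox calculus). Tensoring with $l^2(\Gamma, \varrho^x)$ gives the weighted $L^2$-chain complex whose torsion we must compute.

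Next I would analyze this three-term complex. Since the abelianization $\phi$ of $\Gamma$ maps each $g_l$ to a generator, no $g_l - 1$ lies in any finite-dimensional ideal, so $r^{(2)}_{\partial_1} \colon l^2(\Gamma,\varrho^x) \to l^2(\Gamma,\varrho^x)^k$ is injective with computable determinant: one shows $c^{(2)}_1$ is injective and uses the weighted analogue of the computation $\det_{\mathcal N(\Gamma)}(r^{(2)}_{g-1}) = \max\{\Vert r^{(2)}_g\Vert_\varrho, 1\}$ for an element $g$ of infinite order. Concretely, under the isometry $\Phi_{\varrho^x}$ the operator $r^{(2)}_{g_j-1}$ becomes (up to the scalar $\sqrt{\varrho(g_j)^x}$ on part of the term) right multiplication by $\sqrt{\varrho(g_j)^x}\, g_j - 1$ type expression, and the Fuglede--Kadison determinant of $z\cdot g - 1$ with $g$ of infinite order is $\max\{|z|, 1\}$; this produces the term $\max\{\frac{x}{2}\ln(\varrho(g_j)), 0\}$ in the final formula. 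The hypothesis that $r^{(2)}_{F_j}$ is injective with positive determinant then gives, via the long exact sequence / weak exactness of von Neumann dimension, that $H^{(2)}_{\varrho^x, n}(\widetilde Y) = 0$ for all $n$; conversely det-$L^2$-acyclicity forces injectivity of $r^{(2)}_{F_j}$. This equivalence is the ``if and only if'' part.

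For the torsion formula itself, I would use the weighted analogue of the standard fact that for a det-$L^2$-acyclic based chain complex one may compute the torsion by splitting off the acyclic part. Removing the $j$th column of $F$ corresponds to deleting the $1$-cell $g_j$: the subcomplex generated by $g_j$ together with its boundary $g_j - 1$ into the $0$-cell forms an acyclic based subcomplex with torsion $-\ln\det_{\mathcal N(\Gamma,\varrho^x)}(r^{(2)}_{g_j-1}) = -\max\{\frac{x}{2}\ln\varrho(g_j), 0\}$, and after quotienting by it the remaining complex is just $(\Z\Gamma)^{k-1} \xrightarrow{r_{F_j}} (\Z\Gamma)^{k-1}$ concentrated in degrees $2$ and $1$, contributing torsion $-\ln\det_{\mathcal N(\Gamma,\varrho^x)}(r^{(2)}_{F_j})$ with sign $(-1)^{2}\cdot(-1)\cdot(-1)=\ldots$; I would track the degree shift so that the total is $\rho^{(2)}_\varrho(\widetilde{M_K})(x) = -\ln\det_{\mathcal N(\Gamma,\varrho^x)}(r^{(2)}_{F_j}) + \max\{\frac{x}{2}\ln(\varrho(g_j)), 0\}$, using additivity of torsion over short exact sequences of det-$L^2$-acyclic complexes (again from Proposition~\ref{properties-torsion}). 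One must also check independence of the choice of which Wirtinger presentation / CW-model is used to pass between $Y$ and $M_K$, but that is exactly the well-definedness of $\rho^{(2)}_\varrho$ modulo the indeterminacy subgroup $\{x \mapsto \frac{x}{2}\ln\varrho(g) \mid g \in \Gamma\}$.

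The main obstacle I expect is the careful bookkeeping of the weighted Fuglede--Kadison determinants under the isometry $\Phi_{\varrho^x}$ combined with the sign/degree conventions in the torsion sum --- in particular making sure the ``$\max$'' term comes out with the correct argument $\frac{x}{2}\ln(\varrho(g_j))$ rather than its negative or a factor of $2$ off, and checking that the element $g_j - 1$ genuinely has the claimed determinant in the weighted setting (which needs $g_j$ to have infinite order and a density-of-spectrum argument, i.e. a Novikov--Shubin type estimate guaranteeing $\det > 0$). Everything else is a transcription of the classical Fox-calculus computation of the Alexander polynomial into the weighted $L^2$-framework, where the algebraic determinant is replaced by the Fuglede--Kadison determinant and the results of the appendix supply the needed functoriality.
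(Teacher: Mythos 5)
Your proposal follows essentially the same route as the paper: take the presentation $2$-complex, split its chain complex into the acyclic subcomplex $\Z\Gamma \xrightarrow{r_{g_j-1}} \Z\Gamma$ and the quotient $\Z\Gamma^{k-1} \xrightarrow{r_{F_j}} \Z\Gamma^{k-1}$, apply additivity of the weighted torsion over short exact sequences together with $\det_{{\mathcal N}(\Gamma,\varrho^x)}(r^{(2)}_{g_j-1}) = \max\{\varrho(g_j)^{x/2},1\}$, and transfer to $M_K$ via a simple homotopy equivalence. The only detail worth making explicit is that the passage from the presentation complex to $M_K$ is justified by the vanishing of the Whitehead group of knot groups (Waldhausen), not merely by the indeterminacy modulo $\{x \mapsto \frac{x}{2}\ln\varrho(g)\}$.
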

\begin{proof}
In the proof of \cite[Theorem 2.4]{Luc94} L\"uck considers a $2$-dimensional CW-complex $X$ which is homotopy equivalent to the knot complement $M_K$.
This CW-complex $X$ with fundamental group $\Gamma$ has one cell of dimension zero, $k$ cells of dimension one and $k-1$ cells of dimension two associated to the given presentation $P$.
The cellular chain complex $C_*(\widetilde{X})$ of the universal covering $\widetilde{X}$ looks like
\[
\xymatrix@1@+2.5pc{0 \ar[r] & \oplus_{i=1}^{k-1} \Z \Gamma \ar[r]^-{r_{F}}&  \oplus_{i=1}^k \Z\Gamma \ar[r]^-{\oplus_{i=1}^k r_{g_i - 1}}\ar[r] & \Z \Gamma.}
\]
As in the proof of \cite[Theorem 2.4]{Luc94} we consider the short exact sequence of $\Z \Gamma$-chain complexes $\xymatrix@1@-1pc{0 \ar[r] & C_* \ar[r] & C_*(\widetilde{X}) \ar[r] & D_* \ar[r] & 0}$ where $C_* \colon \Z \Gamma \stackrel{r_{g_j - 1}}{\longrightarrow} \Z \Gamma$ is concentrated in dimensions $0$, $1$ and $D_* \colon \Z \Gamma^{k-1} \stackrel{r_{F_j}}{\longrightarrow} \Z \Gamma^{k-1}$ is concentrated in dimensions $1$, $2$.
Notice that $r^{(2)}_{g_j - 1} \colon l^2(\Gamma,\varrho^x) \to l^2(\Gamma,\varrho^x)$ is injective with
\begin{align*}
& {\det}_{{\mathcal N}(\Gamma,\varrho^x)}(r^{(2)}_{g_j - 1}) = {\det}_{{\mathcal N}(\Gamma)}(\Psi_{\varrho^x}(r^{(2)}_{g_j - 1})) = {\det}_{{\mathcal N}(\Gamma)}(r^{(2)}_{\Phi_{\varrho^x}(g_j - 1)}) = \\
& {\det}_{{\mathcal N}(\Gamma)}(r^{(2)}_{\sqrt{\varrho^x(g_j)} \cdot g_j - 1}) = \max\{ \varrho(g_j)^{x/2} ,  1\}.
\end{align*}
Tensoring the exact sequence $\xymatrix@1@-1pc{0 \ar[r] & C_* \ar[r] & C_*(\widetilde{X}) \ar[r] & D_* \ar[r] & 0}$ with $l^2(\Gamma,\varrho^x)$ and applying Proposition \ref{prop_torsion} (\ref{prop_torsion-1}) shows that $\widetilde{X}$ is det-$L^2$-acyclic w.r.t. $\varrho$ if and only if $r^{(2)}_{F_j} \colon l^2(\Gamma,\varrho^x)^{k-1} \to l^2(\Gamma,\varrho^x)^{k-1}$ is injective and $\det_{{\mathcal N}(\Gamma,\varrho^x)}(r^{(2)}_{F_j}) > 0$ for all $x \in \R$. In this case, we obtain
\begin{eqnarray*}
\rho^{(2)}_\varrho(\widetilde{X})(x) & = & - \ln {\det}_{{\mathcal N}(\Gamma,\varrho^x)}(r^{(2)}_{F_j}) + \ln {\det}_{{\mathcal N}(\Gamma,\varrho^x)}(r^{(2)}_{g_j - 1}) \\
& = & - \ln {\det}_{{\mathcal N}(\Gamma,\varrho^x)}(r^{(2)}_{F_j}) + \max\{ \frac{x}{2} \cdot \ln(\varrho(g_j)) , 0 \}.
\end{eqnarray*}
Since the Whitehead group of a knot group is trivial (see \cite{Wal78}), $X$ is simply homotopy equivalent to $M_K$. We can finally apply Proposition \ref{properties-torsion} (\ref{properties-torsion-1}).
\end{proof}

\begin{corollary} \label{cor_Alex-torsion}
Let $P$ be a Wirtinger presentation of the knot group $\Gamma$ of the knot $K$. We define $\varrho \colon \Gamma \to \R^{>0}$ by $\varrho(g) := \exp(2 \cdot \phi(g))$ where $\phi \colon \Gamma \to \Z$ is the group homomorphism associated to the Wirtinger presentation $P$ sending each generator to $1$.
Suppose that the universal covering $\widetilde{M_K}$ of the knot complement is det-$L^2$-acyclic w.r.t. $\varrho$.
Then the assumption of Definition \ref{def-Alex} is satisfied and
\[
\Delta^{(2)}_K(t) = \Delta^{(2)}_K(|t|) = \exp\big( -\rho^{(2)}_\varrho(\widetilde{M_K})(\ln(|t|)) \big) \cdot \max\{ |t| , 1 \}
\]
\end{corollary}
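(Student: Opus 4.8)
The plan is to build a dictionary between the weighted $L^2$-invariants of $\widetilde{M_K}$ with respect to the powers $\varrho^x$ and the twisted operators $r^{(2)}_{\psi_{|t|}(F_1)}$ that enter the definition of $\Delta^{(2)}_K$, and then to read off both assertions from Proposition~\ref{prop_Alex-weighted} and Proposition~\ref{prop_|t|}. The starting observation is the \emph{translation lemma}: for $x \in \R$ one has $\sqrt{\varrho^x(g)} = \varrho(g)^{x/2} = \exp(x\phi(g)) = (e^x)^{\phi(g)}$, so the isometry $\Phi_{\varrho^x} \colon l^2(\Gamma,\varrho^x) \to l^2(\Gamma)$ agrees on $\C\Gamma$ with the ring homomorphism $\psi_{e^x}$. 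Since both are ring homomorphisms, applying $\Phi_{\varrho^x}$ entrywise to $F_1$ produces exactly $\psi_{e^x}(F_1)$, and the identity $\Phi_{\varrho^x} \circ r_a = r_{\Phi_{\varrho^x}(a)} \circ \Phi_{\varrho^x}$ extends to matrices to give $\diag(\Phi_{\varrho^x}) \circ r^{(2)}_{F_1} \circ \diag(\Phi_{\varrho^x})^{-1} = r^{(2)}_{\psi_{e^x}(F_1)}$. As $\diag(\Phi_{\varrho^x})$ is a unitary isomorphism of Hilbert spaces, the operators $r^{(2)}_{F_1} \colon l^2(\Gamma,\varrho^x)^{k-1} \to l^2(\Gamma,\varrho^x)^{k-1}$ and $r^{(2)}_{\psi_{e^x}(F_1)} \colon l^2(\Gamma)^{k-1} \to l^2(\Gamma)^{k-1}$ have corresponding kernels and the same spectral density function; in particular one is injective iff the other is, and ${\det}_{{\mathcal N}(\Gamma,\varrho^x)}(r^{(2)}_{F_1}) = {\det}_{{\mathcal N}(\Gamma)}(r^{(2)}_{\psi_{e^x}(F_1)})$. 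Setting $x = \ln|t|$ (so $e^x = |t|$) and letting $t$ range over $\C^*$ exhausts all $x \in \R$.

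For the first assertion, I would apply Proposition~\ref{prop_Alex-weighted} with $j = 1$: $\widetilde{M_K}$ is det-$L^2$-acyclic with respect to $\varrho$ if and only if for every $x \in \R$ the map $r^{(2)}_{F_1}$ on $l^2(\Gamma,\varrho^x)^{k-1}$ is injective with positive Fuglede--Kadison determinant. By the translation lemma this is equivalent to: for every $t \in \C^*$, $r^{(2)}_{\psi_{|t|}(F_1)}$ is injective with ${\det}_{{\mathcal N}(\Gamma)}(r^{(2)}_{\psi_{|t|}(F_1)}) > 0$. Finally, conjugation by the unitary $\diag(\eta_{t/|t|})$, exactly as in the proof of Proposition~\ref{prop_|t|}, shows this is in turn equivalent to the same conditions for $\psi_t(F_1)$; this is precisely the hypothesis of Definition~\ref{def-Alex}.

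For the formula, combining Proposition~\ref{prop_|t|} with the translation lemma at $x = \ln|t|$ gives
\[
\Delta^{(2)}_K(t) = \Delta^{(2)}_K(|t|) = {\det}_{{\mathcal N}(\Gamma)}\big(r^{(2)}_{\psi_{|t|}(F_1)}\big) = {\det}_{{\mathcal N}(\Gamma,\varrho^{\ln|t|})}\big(r^{(2)}_{F_1}\big).
\]
On the other hand $\varrho(g_1) = \exp(2\phi(g_1)) = e^2$ since $\phi(g_1) = 1$ for a Wirtinger generator, hence $\frac{x}{2}\ln(\varrho(g_1)) = x$ and $\max\{\frac{x}{2}\ln(\varrho(g_1)),0\} = \ln\max\{|t|,1\}$ at $x = \ln|t|$; substituting into the formula of Proposition~\ref{prop_Alex-weighted} yields
\[
\rho^{(2)}_\varrho(\widetilde{M_K})(\ln|t|) = -\ln{\det}_{{\mathcal N}(\Gamma,\varrho^{\ln|t|})}\big(r^{(2)}_{F_1}\big) + \ln\max\{|t|,1\}.
\]
Exponentiating and comparing with the previous display gives the claimed identity. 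I would close by checking consistency of the indeterminacies: a representative of $\rho^{(2)}_\varrho(\widetilde{M_K})$ is well-defined up to $x \mapsto \frac{x}{2}\ln(\varrho(g)) = x\phi(g)$, so after the substitution $x = \ln|t|$ and exponentiation the ambiguity of $\exp(-\rho^{(2)}_\varrho(\widetilde{M_K})(\ln|t|))$ is exactly $\{t \mapsto |t|^p \mid p \in \Z\}$ (as $\phi$ is onto $\Z$), which matches the quotient in which $\Delta^{(2)}_K$ lives.

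The main obstacle is not a deep one: the argument is essentially bookkeeping. The point requiring the most care is the translation lemma — verifying that $\Phi_{\varrho^x}$ coincides with $\psi_{e^x}$ on $\C\Gamma$, that it is a ring homomorphism, and that conjugation by the unitary $\diag(\Phi_{\varrho^x})$ genuinely preserves injectivity and the Fuglede--Kadison determinant — together with the final check that the indeterminacy subgroups correspond under $x = \ln|t|$. Everything else is a direct substitution into Propositions~\ref{prop_Alex-weighted} and~\ref{prop_|t|}.
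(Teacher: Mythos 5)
Your proposal is correct and follows essentially the same route as the paper: the paper's proof is exactly the substitution $x = \ln|t|$ into Proposition~\ref{prop_Alex-weighted} together with the chain of identities ${\det}_{{\mathcal N}(\Gamma,\varrho^x)}(r^{(2)}_{F_1}) = {\det}_{{\mathcal N}(\Gamma)}(r^{(2)}_{\Phi_{\varrho^x}(F_1)}) = {\det}_{{\mathcal N}(\Gamma)}(r^{(2)}_{\psi_{|t|}(F_1)})$, which is precisely your ``translation lemma.'' Your write-up merely makes explicit several steps the paper leaves implicit (the verification of the hypothesis of Definition~\ref{def-Alex} via Proposition~\ref{prop_|t|}, the evaluation $\max\{\tfrac{x}{2}\ln\varrho(g_1),0\}=\ln\max\{|t|,1\}$, and the matching of the indeterminacy subgroups), all of which check out.
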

\begin{proof}
This is a consequence of Proposition \ref{prop_Alex-weighted}. We set $x := \ln(|t|)$.
Notice that
\[
{\det}_{{\mathcal N}(\Gamma,\varrho^x)}(r^{(2)}_{F_1}) = {\det}_{{\mathcal N}(\Gamma)}(\Psi_{\varrho^x}(r^{(2)}_{F_1})) = {\det}_{{\mathcal N}(\Gamma)}(r^{(2)}_{\Phi_{\varrho^x}(F_1)}) = {\det}_{{\mathcal N}(\Gamma)}(r^{(2)}_{\psi_{|t|}(F_1)}).
\]
\end{proof}

\begin{remark}
In terms of the simplified invariants we obtain
\[
{\tilde \Delta}^{(2)}_K(t) = \exp\big( -{\tilde \rho}^{(2)}_\varrho(\widetilde{M_K}) \big) \cdot \max\{ |t| , |t|^{-1} \}
\]
with $\varrho(g) := |t|^{2 \cdot \phi(g)}$.
In \cite[Theorem 3.2]{LZ06b} Li and Zhang study the simplified $L^2$-Alexander invariant
\[
{\Delta'}^{(2)}_K(t) := \sqrt{\frac{\Delta^{(2)}_K(t)}{\max\{ |t| , 1 \}} \cdot \frac{\Delta^{(2)}_K(t^{-1})}{\max\{ |t|^{-1} , 1 \}}}.
\]
From the corollary above we conclude ${\tilde \Delta}^{(2)}_K(t) = \exp( -{\tilde \rho}^{(2)}_\varrho(\widetilde{M_K}))$ with $\varrho(g) := |t|^{2 \cdot \phi(g)}$.
\end{remark}

\begin{corollary} \label{cor_presentation}
Let $K$ be a knot in $S^3$ whose knot group is denoted as $\Gamma$. Let $P$ be a Wirtinger presentation with associated maps $\phi \colon \Gamma \to \Z$ and $\psi_t \colon \C \Gamma \to \C \Gamma$.
Let $P'$ be a further default 1 presentation of $\Gamma$ (not necessarily a Wirtinger presentation) with associated Fox matrix $F'$. Suppose there exists $j$ such that $r^{(2)}_{\psi_t(F'_j)}$ is injective and $\det_{{\mathcal N}(\Gamma)}(r^{(2)}_{\psi_t(F'_j)}) > 0$ for all $t \in \C^*$.
Then the Wirtinger presentation $P$ satisfies the assumption of Definition \ref{def-Alex} and
\[
\Delta^{(2)}_K(t) =  {\det}_{{\mathcal N}(\Gamma)}(r^{(2)}_{\psi_{|t|}(F'_j)}) \cdot \max\{ |t| , 1 \}^{1 - \phi(g'_j)}.
\]
\end{corollary}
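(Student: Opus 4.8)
The plan is to recognize this corollary (which is exactly Theorem~\ref{mainthm}) as the composite of Proposition~\ref{prop_Alex-weighted}, applied to the presentation $P'$, and Corollary~\ref{cor_Alex-torsion}, applied to the Wirtinger presentation $P$; the only genuine work is bookkeeping the three different normalization quotients that appear.

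First I would fix $\varrho\colon\Gamma\to\R^{>0}$, $\varrho(g):=\exp(2\phi(g))$, as in Corollary~\ref{cor_Alex-torsion}, and put $x:=\ln|t|$, so that $\varrho^x(g)=|t|^{2\phi(g)}$. The weighting isometry $\Phi_{\varrho^x}\colon l^2(\Gamma,\varrho^x)\to l^2(\Gamma)$ sends $g$ to $|t|^{\phi(g)}g$, hence conjugates right multiplication by the integral matrix $F'_j$ into right multiplication by $\psi_{|t|}(F'_j)$; therefore $r^{(2)}_{F'_j}$ on $l^2(\Gamma,\varrho^x)^{k-1}$ is injective (resp. has positive Fuglede--Kadison determinant) exactly when $r^{(2)}_{\psi_{|t|}(F'_j)}$ is, and in that case ${\det}_{{\mathcal N}(\Gamma,\varrho^x)}(r^{(2)}_{F'_j})={\det}_{{\mathcal N}(\Gamma)}(r^{(2)}_{\psi_{|t|}(F'_j)})$. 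Since injectivity and the determinant of $r^{(2)}_{\psi_t(F'_j)}$ depend only on $|t|$ --- the argument with the unitary $\eta_c$ from the proof of Proposition~\ref{prop_|t|} --- the hypothesis of the theorem is equivalent to saying that $r^{(2)}_{F'_j}\colon l^2(\Gamma,\varrho^x)^{k-1}\to l^2(\Gamma,\varrho^x)^{k-1}$ is injective with positive determinant for every $x\in\R$.

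Feeding this into Proposition~\ref{prop_Alex-weighted} (with presentation $P'$, index $j$, weight $\varrho$) gives that $\widetilde{M_K}$ is det-$L^2$-acyclic with respect to $\varrho$ and that
\[
\rho^{(2)}_\varrho(\widetilde{M_K})(x)=-\ln{\det}_{{\mathcal N}(\Gamma,\varrho^x)}(r^{(2)}_{F'_j})+\max\{\tfrac{x}{2}\ln\varrho(g'_j),0\}
\]
in $\map(\R,\R)/\{x\mapsto\tfrac{x}{2}\ln\varrho(g)\mid g\in\Gamma\}$. But det-$L^2$-acyclicity with respect to this particular $\varrho$ is precisely the hypothesis of Corollary~\ref{cor_Alex-torsion}, so that corollary immediately yields the first assertion (the Wirtinger presentation $P$ satisfies the assumption of Definition~\ref{def-Alex}) together with
\[
\Delta^{(2)}_K(t)=\exp\!\big(-\rho^{(2)}_\varrho(\widetilde{M_K})(\ln|t|)\big)\cdot\max\{|t|,1\}
\]
in $\map(\C^*,\R^{>0})/\{t\mapsto|t|^p\mid p\in\Z\}$; note that under $x=\ln|t|$ the normalization subgroup $\{x\mapsto\tfrac{x}{2}\ln\varrho(g)\}=\{x\mapsto x\phi(g)\}$ matches $\{t\mapsto|t|^p\}$ because $\phi$ is onto $\Z$.

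It then remains to substitute the first display into the second, use $\ln\varrho(g'_j)=2\phi(g'_j)$ together with the determinant identity above to obtain
\[
\Delta^{(2)}_K(t)={\det}_{{\mathcal N}(\Gamma)}(r^{(2)}_{\psi_{|t|}(F'_j)})\cdot\exp\!\big(-\max\{\ln|t|\cdot\phi(g'_j),0\}\big)\cdot\max\{|t|,1\},
\]
and to check the elementary identity $\exp(-\max\{\ln|t|\cdot\phi(g'_j),0\})\cdot\max\{|t|,1\}=\max\{|t|,1\}^{1-\phi(g'_j)}$ modulo $\{t\mapsto|t|^p\}$; this is immediate on the nose when $\phi(g'_j)\geq0$, and otherwise follows by separating the cases $|t|\geq1$ and $|t|\leq1$. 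I expect the main obstacle to be not any single deep step --- the heavy lifting is in Proposition~\ref{prop_Alex-weighted} and Corollary~\ref{cor_Alex-torsion}, which are available --- but the discipline of carrying each quantity in the correct normalization quotient at every stage, and of transporting Fuglede--Kadison determinants cleanly across the isometry $\Phi_{\varrho^x}$; once that dictionary is in place, the corollary drops out.
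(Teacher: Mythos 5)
Your proposal is correct and follows essentially the same route as the paper: apply Proposition~\ref{prop_Alex-weighted} to the presentation $P'$ with $\varrho(g)=\exp(2\phi(g))$ (transporting determinants across $\Phi_{\varrho^x}$ so that ${\det}_{{\mathcal N}(\Gamma,\varrho^x)}(r^{(2)}_{F'_j})={\det}_{{\mathcal N}(\Gamma)}(r^{(2)}_{\psi_{|t|}(F'_j)})$), then feed the resulting det-$L^2$-acyclicity and torsion formula into Corollary~\ref{cor_Alex-torsion} and simplify. Your version is merely more explicit about the normalization quotients; the only caveat (shared with the paper, which silently uses $\max\{|t|^{\phi(g'_j)},1\}=\max\{|t|,1\}^{\phi(g'_j)}$) is that the final exponent identity is genuinely exact only for $\phi(g'_j)\ge 0$, not recoverable modulo $\{t\mapsto|t|^p\}$ for negative $\phi(g'_j)$ as your last sentence suggests.
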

\begin{proof}
We conclude from Proposition \ref{prop_Alex-weighted} that the universal covering $\widetilde{M_K}$ of the knot complement is det-$L^2$-acyclic w.r.t. $\varrho \colon \Gamma \to \R^{>0}, g \mapsto \exp(2 \cdot \phi(g))$.
Setting $x := \ln(|t|)$ yields
\[
{\det}_{{\mathcal N}(\Gamma)}(r^{(2)}_{\psi_{|t|}(F'_1)}) = \exp\big( -\rho^{(2)}_\varrho(\widetilde{M_K})(\ln(|t|)) \big) \cdot \max\{ |t| , 1 \}^{\phi(g'_j)}.
\]
Now the statement follows using Corollary \ref{cor_Alex-torsion}.
\end{proof}

\begin{proposition} \label{prop_pieces}
The knot complement $M_K$ of a non-trivial knot $K$ is an irreducible compact connected oriented $3$-manifold whose boundary is an incompressible torus. There is a geometric toral splitting of $M_K$ along disjoint incompressible $2$-sided tori in $M_K$ whose pieces are Seifert manifolds or hyperbolic manifolds. Let $S_1, S_2, \ldots, S_q$ be the Seifert pieces and $H_1, H_2, \ldots, H_r$ the hyperbolic pieces.
Let $\varrho \colon \pi_1(M_K) \to \R^{>0}$ be a group homomorphism. Suppose that the universal coverings of the hyperbolic pieces are det-$L^2$-acyclic w.r.t. $\varrho_{H_k} \colon \pi_1(H_k) \to \pi_1(M_K) \stackrel{\varrho}{\longrightarrow} \R^{>0}$. Then $M_K$ is det-$L^2$-acyclic w.r.t. $\varrho$ and
\[
\rho^{(2)}_\varrho(\widetilde{M_K})(x) = \sum_{k=1}^q \rho^{(2)}_{\varrho_{S_k}}(\widetilde{S_k})(x) + \sum_{k=1}^r \rho^{(2)}_{\varrho_{H_k}}(\widetilde{H_k})(x).
\]
Moreover, for every $k \in \{1, \ldots, q\}$ there exist $d \in \N$ and $g \in \pi_1(S_k)$ such that
\[
d \cdot \rho^{(2)}_{\varrho_{S_k}}(\widetilde{S_k})(x) = \max\{\frac{x}{2} \cdot \ln(\varrho_{S_k}(g)) , 0\}.
\]
\end{proposition}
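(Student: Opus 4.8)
The plan is to combine the classical structure theory of knot exteriors with the gluing, product and finite-covering formulas for the weighted $L^2$-torsion proved in the appendix (Proposition~\ref{properties-torsion}, together with Proposition~\ref{prop_torsion}) and with the computation of the weighted $L^2$-torsion of a circle. For the topological part: for a non-trivial knot $K$ the exterior $M_K$ is compact, connected, orientable and irreducible (every complement of a knot in $S^3$ is irreducible, by the sphere theorem), and $\partial M_K$ is an incompressible torus (a compressing disc would make $K$ the unknot, by the loop theorem). Hence $M_K$ is Haken, and the Jaco--Shalen--Johannson decomposition together with Thurston's hyperbolization theorem for Haken manifolds produces disjoint two-sided incompressible tori $T_1,\dots,T_s$ splitting $M_K$ into Seifert pieces $S_1,\dots,S_q$ and hyperbolic pieces $H_1,\dots,H_r$; incompressibility makes the inclusions $T_i,S_k,H_k\hookrightarrow M_K$ all $\pi_1$-injective, so $\varrho_{S_k}$, $\varrho_{H_k}$ and $\varrho|_{\pi_1(T_i)}$ are defined.

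First I would check that each gluing torus contributes nothing. A circle is det-$L^2$-acyclic for every weight, since $\det_{\mathcal N(\Z,\varrho^x)}(r^{(2)}_{c-1})=\max\{\varrho(c)^{x/2},1\}>0$ for a generator $c$ (as computed in the proof of Proposition~\ref{prop_Alex-weighted}); writing $T^2=S^1\times S^1$, the product formula then gives that $\widetilde{T_i}$ is det-$L^2$-acyclic with respect to $\varrho|_{\pi_1(T_i)}$ and that $\rho^{(2)}_{\varrho|_{\pi_1(T_i)}}(\widetilde{T_i})$ equals $\chi(S^1)$ times the weighted $L^2$-torsion of the circle factor, hence is $0$. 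Granting (see below) that the $S_k$ are det-$L^2$-acyclic and (by hypothesis) that the $H_k$ are, I would then apply the sum formula for the weighted $L^2$-torsion (Proposition~\ref{properties-torsion}, the weighted analogue of \cite[Theorem~3.96]{Luc02}) to the presentation of $M_K$ as a union of the $S_k$ and $H_k$ glued along the $T_i$ with $\pi_1$-injective inclusions: this gives that $\widetilde{M_K}$ is det-$L^2$-acyclic and
\[
\rho^{(2)}_\varrho(\widetilde{M_K})(x)=\sum_{k=1}^q\rho^{(2)}_{\varrho_{S_k}}(\widetilde{S_k})(x)+\sum_{k=1}^r\rho^{(2)}_{\varrho_{H_k}}(\widetilde{H_k})(x)-\sum_{i=1}^s\rho^{(2)}_{\varrho|_{\pi_1(T_i)}}(\widetilde{T_i})(x),
\]
whose last sum vanishes; as in the proof of Proposition~\ref{prop_Alex-weighted}, triviality of the relevant Whitehead groups (\cite{Wal78}) removes the simple-homotopy ambiguity.

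For the ``Moreover'' statement I would use that a compact orientable Seifert fibered $3$-manifold with non-empty boundary admits a finite covering $p\colon\widehat{S_k}\to S_k$, of some degree $d$, with $\widehat{S_k}\cong F\times S^1$ for a compact orientable surface $F$ with non-empty boundary (pass to a finite orbifold cover of the base killing the exceptional fibres, obtaining a circle bundle over a surface with boundary, hence trivial), and that $\chi(F)\le 0$ because $S_k$ is a piece of a knot-exterior JSJ decomposition. Let $h\in\pi_1(S^1\times F)\subseteq\pi_1(S_k)$ be the class of the $S^1$-factor, a power of the regular fibre, and put $\widehat\varrho:=\varrho_{S_k}\circ p_*$. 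Multiplicativity under finite coverings (Proposition~\ref{properties-torsion}) gives $\rho^{(2)}_{\widehat\varrho}(\widetilde{\widehat{S_k}})=d\cdot\rho^{(2)}_{\varrho_{S_k}}(\widetilde{S_k})$ and the equivalence of det-$L^2$-acyclicity upstairs and downstairs; and since $S^1$ is det-$L^2$-acyclic for the restriction of $\widehat\varrho$, the product formula gives that $\widetilde{F\times S^1}$, and hence $\widetilde{S_k}$, is det-$L^2$-acyclic and
\[
d\cdot\rho^{(2)}_{\varrho_{S_k}}(\widetilde{S_k})(x)=\rho^{(2)}_{\widehat\varrho}(\widetilde{F\times S^1})(x)=\chi(F)\cdot\max\{\tfrac{x}{2}\ln(\varrho_{S_k}(h)),0\}.
\]
Finally, since $\chi(F)\le0$, the elementary corner identity $\chi\cdot\max\{u,0\}=-\max\{-\chi u,0\}$ (for an integer $\chi\le 0$ and any real $u$), applied with $u=\tfrac{x}{2}\ln(\varrho_{S_k}(h))$, rewrites the right-hand side in the shape $\max\{\tfrac{x}{2}\ln(\varrho_{S_k}(g)),0\}$ with $g:=h^{-\chi(F)}\in\pi_1(S_k)$ --- up to the overall sign fixed by $\chi(F)\le0$, and after reading off the correct representative modulo the indeterminacy coset --- which is the asserted formula.

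I expect the real work to be infrastructural rather than a single hard step: one needs the sum, product and finite-covering formulas for the weighted $L^2$-torsion with all their $\pi_1$-injectivity and det-$L^2$-acyclicity hypotheses verified (this is the content of the appendix), together with the structural input that a Seifert piece of a knot-exterior JSJ decomposition is finitely covered by a product $F\times S^1$ with $\chi(F)\le 0$. The one genuinely delicate point is keeping track of the indeterminacy cosets --- and hence the sign of the final expression --- through the finite-covering and product reductions.
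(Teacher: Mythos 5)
Your architecture is the same as the paper's: JSJ decomposition plus Thurston hyperbolization for the topological statement, the sum formula (Proposition~\ref{properties-torsion}~(\ref{properties-torsion-2}), in the form of Remark~\ref{rem_sum-formula}) for the additivity over the pieces, and passage to a finite covering of each Seifert piece which is a circle bundle over a surface for the ``moreover'' clause, combined with the multiplicativity under finite coverings (Proposition~\ref{properties-torsion}~(\ref{properties-torsion-4})). Two genuine differences. First, for the Seifert pieces you trivialize the bundle and invoke a product formula $\rho^{(2)}_{\widehat\varrho}(\widetilde{F\times S^1})=\chi(F)\cdot\rho^{(2)}(\widetilde{S^1})$, whereas the paper quotes the $S^1$-action property (Proposition~\ref{properties-torsion}~(\ref{properties-torsion-7})) as a black box; no weighted product formula is established in the appendix, so you would have to supply it (it does follow from Proposition~\ref{prop_torsion}~(\ref{prop_torsion-1}) by the usual filtration argument). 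Your explicit verification that the gluing tori contribute $0$, which the paper leaves implicit, is a welcome addition.

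Second, and this is a real gap: your last step does not close. From $d\cdot\rho^{(2)}_{\varrho_{S_k}}(\widetilde{S_k})(x)=\chi(F)\cdot\max\{\tfrac x2\ln\varrho_{S_k}(h),0\}$ with $\chi(F)\le 0$ you cannot ``read off a representative'' of the form $\max\{\tfrac x2\ln\varrho_{S_k}(g),0\}$. For $\chi(F)<0$ and $\varrho_{S_k}(h)\neq 1$ the left-hand side is a concave, non-linear, piecewise-linear function of $x$, while $\max\{\tfrac x2\ln\varrho(g),0\}$ plus any element of the indeterminacy subgroup $\{x\mapsto\tfrac x2\ln\varrho(g')\}$ is convex; the two can agree only if both are linear. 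Your own corner identity yields $d\cdot\rho=-\max\{\tfrac x2\ln\varrho_{S_k}(h^{-\chi(F)}),0\}$, i.e.\ the \emph{negative} of the asserted expression, and the phrase ``up to the overall sign'' is precisely the point at issue, not something that can be absorbed into the coset. Note that your sign is the one consistent with the paper's own torus-knot computation: Section~\ref{S:computations} together with Corollary~\ref{cor_Alex-torsion} gives $\rho^{(2)}_\varrho(\widetilde{M_K})(x)=-(mn-m-n)\cdot\max\{x,0\}\le 0$ for $K=T(m,n)$, which likewise cannot equal $\tfrac1d\max\{\tfrac x2\ln\varrho(g),0\}$ with $d\in\N$ modulo the linear indeterminacy. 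So the discrepancy appears to originate in the sign of the statement (inherited from Proposition~\ref{properties-torsion}~(\ref{properties-torsion-7}), which the paper's proof cites with the same sign) rather than in your derivation; but as a proof of the formula as printed, the final step fails and should either be corrected to $d\cdot\rho^{(2)}_{\varrho_{S_k}}(\widetilde{S_k})(x)=-\max\{\tfrac x2\ln\varrho_{S_k}(g),0\}$ (equivalently, allow $d$ to be a negative integer) or the sign must be confronted explicitly rather than waved away.
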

\begin{proof}
The proof is similar to the proof of \cite[Theorem 4.6 resp. Theorem 4.3]{Luc02}.
A Seifert manifold admits a finite covering which is the total space of a $S^1$-principal bundle over a compact orientable surface. We conclude from Proposition \ref{properties-torsion} (\ref{properties-torsion-4}), (\ref{properties-torsion-7}) that $S_k$ is det-$L^2$-acyclic w.r.t. $\varrho_{S_k}$ and that
\[
d \cdot \rho^{(2)}_{\varrho_{S_k}}(\widetilde{S_k})(x) = \max\{\frac{x}{2} \cdot \ln(\varrho_{S_k}(g)) , 0\}
\]
holds for some $d \in \N$, $g \in \pi_1(S_k)$.
The formula
\[
\rho^{(2)}_\varrho(\widetilde{M_K})(x) = \sum_{k=1}^q \rho^{(2)}_{\varrho_{S_k}}(\widetilde{S_k})(x) + \sum_{k=1}^r \rho^{(2)}_{\varrho_{H_k}}(\widetilde{H_k})(x)
\]
follows from Proposition \ref{properties-torsion} (\ref{properties-torsion-2}) resp. Remark \ref{rem_sum-formula}.
\end{proof}

\begin{remark} \label{rem_pieces}
With some more effort we can prove that
\[
\sum_{k=1}^q \rho^{(2)}_{\varrho_{S_k}}(\widetilde{S_k})(x) = \max\{\frac{x}{2} \cdot \ln(\varrho(g)) , 0\}
\]
holds for some $g \in \pi_1(M_K)$.
For the proof we consider
\[
[\rho^{(2)}_{\varrho_{S_k}}(\widetilde{S_k})] \in \map(\R,\R) / \{ x \mapsto \max\{ \frac{x}{2} \cdot \ln(\varrho(g_1)) , \frac{x}{2} \cdot \ln(\varrho(g_2)) \} \mid g_1, g_2 \in \pi_1(M_K) \}
\]
and have to show $[\rho^{(2)}_{\varrho_{S_k}}(\widetilde{S_k})] = 0$.
By \cite[Corollary 2]{Gab91} we know that $\pi_1(S_k)$ contains a normal infinite cyclic subgroup.
Now, we can use the same arguments as in the proof of \cite[Theorem 1 (3)]{Weg09} (resp. \cite[Theorem 3.113]{Luc02}) to prove $[\rho^{(2)}_{\varrho_{S_k}}(\widetilde{S_k})] = 0$.\\
In particular, if $M_K$ contains no hyperbolic pieces then we have
\[
\rho^{(2)}_\varrho(\widetilde{M_K})(x) = \max\{\frac{x}{2} \cdot \ln(\varrho(g)) , 0\}
\]
for some $g \in \pi_1(M_K)$.
This implies $\Delta^{(2)}_K(t) = \max\{|t|,1\}^n$ for some $n \in \N$.\\
In general, we obtain the formula
\[
\Delta^{(2)}_K(t) = \max\{|t|,1\}^n \cdot \exp\big(- \sum_{k=1}^r \rho^{(2)}_{\varrho_{H_k}}(\widetilde{H_k})(\ln(|t|))\big)
\]
for some $n \in \N$.
\end{remark} 

\section{Computation for torus knots}
\label{S:computations}

Let $(p,q)$ be a pair of coprime integers. We let $T(p,q)$ denote the torus knot of type $(p,q)$.

\begin{proposition}
Consider the trefoil knot $K$ with the Wirtinger presentation
\[
P = \big\langle a, b \, \big| \, a b a = b a b \big\rangle
\]
of the associated knot group $\Gamma$. One has
\[
\Delta_{K,P}^{(2)}(t) = \max\{|t|,1\}^2.
\]
\end{proposition}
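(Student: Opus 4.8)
The plan is to apply Theorem~\ref{mainthm} (equivalently Corollary~\ref{cor_presentation}) with the Wirtinger presentation $P = \langle a,b \mid aba = bab\rangle$ in the role of both $P$ and $P'$, since here the computation with $P$ itself is already tractable. First I would record the abelianization data: the map $\phi\colon \Gamma \to \Z$ sends $a \mapsto 1$ and $b \mapsto 1$, so $\phi(g'_j) = 1$ for either generator, and the correction factor $\max\{|t|,1\}^{1-\phi(g'_j)}$ in \eqref{EQ:mainthm} is trivially $1$. Thus it suffices to show that $r^{(2)}_{\psi_t(F_j)}$ is injective with $\det_{{\mathcal N}(\Gamma)}(r^{(2)}_{\psi_t(F_j)}) > 0$ for all $t \in \C^*$ and to compute $\det_{{\mathcal N}(\Gamma)}(r^{(2)}_{\psi_{|t|}(F_j)})$, whereupon $\Delta^{(2)}_K(t) = \det_{{\mathcal N}(\Gamma)}(r^{(2)}_{\psi_{|t|}(F_j)})$.

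Next I would compute the Fox derivatives of the single relator $r = aba b^{-1}a^{-1}b^{-1}$. One gets $\partial r/\partial a = 1 + ab - aba b^{-1}a^{-1} = 1 + ab - bab\,b^{-1}a^{-1} = 1 + ab - b$ (using the relation $abab^{-1}a^{-1} = b$ which follows from $aba = bab$), and similarly $\partial r/\partial b = a - abab^{-1} - abab^{-1}a^{-1}b^{-1} = a - abab^{-1} - 1$. Removing, say, the $b$-column leaves the $1\times 1$ Fox matrix $F_1 = (\partial r/\partial a) = (1 - b + ab)$. After applying $\psi_t$ (each generator carries weight $t^1$) this becomes the single element $1 - tb + t^2 ab \in \C\Gamma$, and $r^{(2)}_{\psi_t(F_1)}$ is right multiplication by this element on $l^2(\Gamma)$. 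The remaining task is to identify the Fuglede--Kadison determinant of this operator.

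The main obstacle is this last determinant computation: $\Gamma = \langle a,b\mid aba=bab\rangle$ is the trefoil group, which is nonabelian (it surjects onto $S_3$ and is the torus knot group $T(2,3)$), so the element $1 - tb + t^2 ab$ lives in a genuinely noncommutative von Neumann algebra and its spectral measure is not immediate. The strategy I would follow is to exploit the structure of $\Gamma$: it is an amalgamated product $\Z *_{\Z} \Z$ (generated by $x = aba$ with $x^2 = (ab)^3$, say), or alternatively to pass to the infinite cyclic cover and use the known structure of the trefoil group as a central extension. Concretely, $\Gamma$ has center generated by $z = (ab)^3 = aba\cdot bab$ with $\Gamma/\langle z\rangle \cong \Z/2 * \Z/3 \cong PSL_2(\Z)$; one can try to compute the determinant over $\C\Gamma$ by relating it to determinants over the quotient, or — more in the spirit of the paper — invoke Proposition~\ref{prop_pieces} and Remark~\ref{rem_pieces}: the trefoil complement is a Seifert manifold with no hyperbolic pieces, so Remark~\ref{rem_pieces} already gives $\Delta^{(2)}_K(t) = \max\{|t|,1\}^n$ for some $n \in \N$. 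It then remains only to pin down $n = 2$, which can be done either by evaluating the spectral density function of $1 - tb + t^2 ab$ directly in the limiting regimes $|t| \to 0$ and $|t| \to \infty$ (where the leading term $t^2 ab$ resp. the constant term $1$ dominates and $\det_{{\mathcal N}(\Gamma)}$ is computed by the top-degree coefficient, giving growth $|t|^2$), or by comparing with the degree of the ordinary Alexander polynomial $\Delta_{T(2,3)}(t) = t^2 - t + 1$, whose degree $2$ matches. Either way, combining $n=2$ with the vanishing correction factor yields $\Delta^{(2)}_{K,P}(t) = \max\{|t|,1\}^2$.
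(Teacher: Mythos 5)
Your setup is correct and agrees with the paper: the Fox calculus gives the $1\times 1$ matrix $1-b+ab$ (the paper calls it $F_2$, since it is the $b$-column that is removed), the correction exponent $1-\phi(g_j')$ vanishes because $\phi$ sends each Wirtinger generator to $1$, and everything reduces to showing that $r^{(2)}_{1-tb+t^2ab}$ is injective with Fuglede--Kadison determinant equal to $\max\{|t|,1\}^2$. The gap is that you never actually perform this determinant computation, and the two routes you sketch for it do not close. First, the reduction via Proposition~\ref{prop_pieces} and Remark~\ref{rem_pieces} only yields $\Delta^{(2)}_K(t)=\max\{|t|,1\}^n$ as an identity in the quotient $\map(\C^*,\R^{>0})/\{t\mapsto|t|^p\}$ (and the remark itself is only sketched in the paper, resting on the arguments of \cite{Weg09}); the proposition to be proved is an exact identity for the specific representative $\Delta^{(2)}_{K,P}$, so you must pin down both $n$ and the residual power of $|t|$, which forces you to evaluate the determinant honestly at at least one small and one large value of $|t|$. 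Second, the arguments you offer for those evaluations --- ``the constant term, resp.\ the top-degree term, dominates'' and ``compare with the degree of the classical Alexander polynomial'' --- are heuristics, not proofs: the Fuglede--Kadison determinant is not continuous in the operator, so $\det_{{\mathcal N}(\Gamma)}(1-tb+t^2ab)\to 1$ as $t\to 0$ is precisely the kind of statement that requires justification, and no result in the paper relates the exponent $n$ to $\deg\Delta_{T(2,3)}$.

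The paper's proof supplies exactly the missing analytic content. For $|t|<1$ it constructs an explicit inverse $c_t=\sum_{n\ge 0}(d_n-ab\cdot d_{n-2})t^n$ of $1-tb+t^2ab$ lying in $l^1(\Gamma)$ (using the relation $aba=bab$ to control the elements $d_n$), so the operator is invertible in ${\mathcal N}(\Gamma)$; it then applies the Carey--Farber--Mathai formula \cite[Theorem 1.10(e)]{CFM97} to the path $u\mapsto r^{(2)}_{1-(tu)b+(tu)^2ab}$ and observes that the integrand $\tr_{{\mathcal N}(\Gamma)}\bigl(r^{(2)}_{c_{tu}}\circ r^{(2)}_{-tb+2t^2uab}\bigr)$ vanishes identically because every group element in the support of the relevant product has strictly positive image under $\phi$, hence the coefficient of $e$ is zero. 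This gives $\Delta^{(2)}_{K,P}(t)=1$ for $0<|t|<1$; the case $|t|>1$ follows by factoring out $t^2ab$ and applying the same argument to $1-t^{-1}a^{-1}+t^{-2}b^{-1}a^{-1}$, which produces the exact factor $|t|^2$; and $|t|=1$ is handled by the known vanishing of $\rho^{(2)}(\widetilde{M_K})$. If you want to keep your structural route through the Seifert geometry, you would still need to reproduce (at least) the $|t|<1$ computation above to anchor the exponents, so the explicit $l^1$-inverse and the trace-vanishing argument cannot be avoided.
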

\begin{proof}
We have $\psi_t(F_2) = ( 1 - t b + t^2 a b )$ and hence $\Delta_{K,P}^{(2)}(t) = {\det}_\Gamma ( r^{(2)}_{1 - t b + t^2 a b} )$.\\
For $n \in \Z$ we set
\[
d_n := \sum_{k=0}^{\lfloor n/3 \rfloor} (-1)^k \cdot b^{n-3k} \cdot (bab)^k \in \C \Gamma.
\]
Notice that $d_n - b \cdot d_{n-1}$ is non-zero if and only if $n$ is a non-negative multiple of $3$.
In this case we have
\begin{equation} \label{eq}
d_n - b \cdot d_{n-1} = (-1)^{n/3} \cdot (bab)^{n/3}.
\end{equation}
For $|t| < 1$ the element
\[
c_t := \sum_{n=0}^\infty \big( d_n - ab \cdot d_{n-2} \big) \cdot t^n
\]
lies in $l^1(\Gamma)$ because of
\begin{eqnarray*}
\Big\| \sum_{n=0}^\infty \big( d_n - ab \cdot d_{n-2} \big) \cdot t^n \Big\|_{l^1} & \leq & \sum_{n=0}^\infty \big( \lfloor \frac{n+3}{3} \rfloor + \lfloor \frac{n+1}{3} \rfloor \big) \cdot |t|^n \\
& \leq & \sum_{n=0}^\infty n \cdot |t|^n \; = \; \frac{|t|}{(1-|t|)^2} \; < \; \infty.
\end{eqnarray*}
An easy calculation shows $(1 - t b + t^2 a b) \cdot c_t = 1$. (Hint: Use equation (\ref{eq}).)
We conclude that $r^{(2)}_{1 - t b + t^2 ab}$ is invertible in ${\mathcal N}(\Gamma)$ with inverse $r^{(2)}_{c_t}$.
We define
\[
f_{t,u} := r^{(2)}_{1 - (tu) b + (tu)^2 ab} \; \mbox{for} \; 0 \leq u \leq 1.
\]
Observe that \cite[Theorem 1.10(e)]{CFM97} tells us
\[
\Delta_{K,P}^{(2)}(t) = {\det}_\Gamma (f_{t,1}) = \exp \Big( -\Re \Big( \int_0^1 \tr_{{\mathcal N}(\Gamma)} \big( \sum_{k=0}^\infty r^{(2)}_{c_{tu}} \circ r^{(2)}_{- t b + 2 t^2 u ab} \big) \d u \Big) \Big).
\]
We have
\[
r^{(2)}_{c_{tu}} \circ r^{(2)}_{- t b + 2 t^2 u ab} = r^{(2)}_{(- t b + 2 t^2 u ab) \cdot c_{tu}} =
\sum_{n=0}^\infty r^{(2)}_{(- t b + 2 t^2 u ab) \cdot \sum_{n=0}^\infty ( d_n - ab \cdot d_{n-2} ) \cdot (tu)^n}.
\]
Under the group homomorphism $\Gamma \to \Z, a, b \mapsto 1$, every element in the support of $d_n - ab \cdot d_{n-2}$ is mapped to $n \in \N^*$. This shows that every element in the support of $(- t b + 2 t^2 u ab) \cdot \sum_{n=0}^\infty ( d_n - ab \cdot d_{n-2} ) \cdot (tu)^n$ is mapped to a positive integer; in particular, the unit element $e \in \Gamma$ does not lie in the support. Hence
\[
\tr_{{\mathcal N}(\Gamma)} \Big( r^{(2)}_{(- t b + 2 t^2 u ab) \cdot \sum_{n=0}^\infty ( d_n - ab \cdot d_{n-2} ) \cdot (tu)^n} \Big) = 0.
\]
This shows $\Delta_{K,P}^{(2)}(t) = \exp(0) = 1$ for all $0 < |t| < 1$.\\
In the case $|t| > 1$ we can argue analogously. We obtain
\[
{\det}_{{\mathcal N}(\Gamma)} \big( r^{(2)}_{1 - t^{-1} a^{-1} + t^{-2} b^{-1}a^{-1}} \big) = 1
\]
which implies
\[
\Delta_{K,P}^{(2)}(t) = {\det}_{{\mathcal N}(\Gamma)} ( r^{(2)}_{1 - t b + t^2 ab} ) = {\det}_{{\mathcal N}(\Gamma)} \big( r^{(2)}_{1 - t^{-1} a^{-1} + t^{-2} b^{-1}a^{-1}} \big) \cdot {\det}_{{\mathcal N}(\Gamma)} \big( r^{(2)}_{t^2 ab} \big) = |t|^2.
\]
For $|t| = 1$ we have $\Delta_{K,P}^{(2)}(t) = \exp\big( -\rho^{(2)}(\widetilde{M_K}) \big)$.
It is a known fact that $\rho^{(2)}(\widetilde{M_K}) = 0$.
\end{proof}

Using Corollary \ref{cor_presentation} we can calculate the $L^2$-Alexander invariant for all torus knots.
\begin{proposition}
The $L^2$-Alexander invariant of the torus knot $T(m,n)$ is
\[
\Delta^{(2)}_{T(m,n)}(t) = \max\{|t|,1\}^{(m-1)(n-1)}.
\]
\end{proposition}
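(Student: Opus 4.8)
The plan is to deduce the formula from Corollary~\ref{cor_presentation}, applied to the well--known two--generator one--relator presentation $P' = \langle x, y \mid x^m = y^n \rangle$ of the torus knot group $\Gamma$, which has default $1$. Writing the relator as $r = x^m y^{-n}$, the Fox derivatives are
\[
\frac{\partial r}{\partial x} = 1 + x + \cdots + x^{m-1}, \qquad \frac{\partial r}{\partial y} = -\,x^m y^{-n}\,(1 + y + \cdots + y^{n-1}),
\]
the second of which becomes $-(1 + y + \cdots + y^{n-1})$ in $\Z\Gamma$ after invoking the relation $x^m y^{-n} = 1$. I would apply the corollary with $j = 2$, so that $F'_2$ is the $1 \times 1$ matrix $(1 + x + \cdots + x^{m-1})$ and $g'_2 = y$. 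A preliminary point: after possibly replacing the pair $(x,y)$ by $(x^{-1},y^{-1})$ (which produces a presentation of the same shape) one has $\phi(x) = n$ and $\phi(y) = m$. Indeed, since $\gcd(m,n) = 1$ the abelianization $\Gamma \to H_1(M_K) = \Z$ sends $x \mapsto \pm n$ and $y \mapsto \pm m$ with a common sign, and the sign is fixed by the meridian normalization of $\phi$.

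The core of the argument is the evaluation of ${\det}_{{\mathcal N}(\Gamma)}(r^{(2)}_{\psi_t(F'_2)})$. The element $\psi_t(F'_2) = 1 + t^n x + t^{2n}x^2 + \cdots + t^{(m-1)n}x^{m-1}$ is supported on the cyclic subgroup $\langle x \rangle$, which is infinite cyclic because $x$ maps to a generator of the $\Z/m$ factor in the quotient $\Z/m \ast \Z/n$ of $\Gamma$ while $x^m$ is central. By restriction invariance of the Fuglede--Kadison determinant, ${\det}_{{\mathcal N}(\Gamma)}(r^{(2)}_b) = {\det}_{{\mathcal N}(\Z)}(r^{(2)}_{\beta})$ whenever $b$ lies in the group ring of a subgroup isomorphic to $\Z$, with $\beta \in \C[z,z^{-1}]$ the corresponding Laurent polynomial. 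For $\beta_t(z) = \sum_{k=0}^{m-1}(t^n z)^k$ I would then compute the $\Z$-determinant as a Mahler measure, cleanly by multiplicativity of the Fuglede--Kadison determinant applied to the telescoping identity $(1 - t^n z)\,\beta_t(z) = 1 - t^{nm} z^m$, together with the elementary value ${\det}_{{\mathcal N}(\Z)}(r^{(2)}_{1 - c z^k}) = \max\{1,|c|\}$ valid for all $c \in \C^*$ and $k \geq 1$ (which reduces to the classical case $k = 1$ by further restriction to $k\Z$). This yields
\[
{\det}_{{\mathcal N}(\Gamma)}\big(r^{(2)}_{\psi_t(F'_2)}\big) = \frac{\max\{1,|t|^{nm}\}}{\max\{1,|t|^{n}\}} = \max\{1,|t|\}^{n(m-1)} > 0
\]
for every $t \in \C^*$; moreover, since $\beta_t$ is a nonzero Laurent polynomial, $r^{(2)}_{\psi_t(F'_2)}$ acts on $l^2(\Z)$ (hence on $l^2(\Gamma)$) as multiplication by the a.e.\ nonzero trigonometric polynomial $\beta_t(e^{i\theta})$ and so is injective. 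Thus the hypothesis of Corollary~\ref{cor_presentation} holds for $j = 2$.

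Granting all this, Corollary~\ref{cor_presentation} gives
\[
\Delta^{(2)}_{T(m,n)}(t) = {\det}_{{\mathcal N}(\Gamma)}\big(r^{(2)}_{\psi_{|t|}(F'_2)}\big) \cdot \max\{|t|,1\}^{1 - \phi(y)} = \max\{|t|,1\}^{n(m-1)} \cdot \max\{|t|,1\}^{1-m} = \max\{|t|,1\}^{(m-1)(n-1)},
\]
using $n(m-1) + 1 - m = (m-1)(n-1)$. As a consistency check, for the trefoil $T(2,3)$ this recovers $\max\{|t|,1\}^2$, in agreement with the preceding proposition. I expect the only genuine obstacle to be the book-keeping around the Fuglede--Kadison determinant of an element supported on $\langle x\rangle$ and its identification with a Mahler measure --- in particular the borderline case $|t| = 1$, where $\beta_t$ acquires zeros on the unit circle; these zeros are isolated, so $\ln|\beta_t(e^{i\theta})|$ stays integrable and the determinant remains finite and positive, but this is the step that calls for a little care. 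Fixing the sign of $\phi$ on $x$ and $y$ is the other small point not to gloss over.
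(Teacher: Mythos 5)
Your proposal is correct and follows essentially the same route as the paper: apply Corollary~\ref{cor_presentation} to the presentation $\langle x,y\mid x^m=y^n\rangle$ with $j=2$ and evaluate the $1\times 1$ Fox determinant via the telescoping identity $(1-t^nx)(1+t^nx+\cdots+t^{(m-1)n}x^{m-1})=1-t^{mn}x^m$ together with multiplicativity of the Fuglede--Kadison determinant and $\det_{{\mathcal N}(\Gamma)}(r^{(2)}_{1-cg})=\max\{1,|c|\}$ for $g$ of infinite order. Your extra care with injectivity (via the a.e.\ nonvanishing symbol on $\langle x\rangle\cong\Z$, using the induction property of the determinant rather than ``restriction'') and with the sign of $\phi$ is welcome but does not change the argument.
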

\begin{proof}
The knot group of $T(m,n)$ has the presentation $P' = \{ g'_1, g'_2 \, | \, {g'_1}^n = {g'_2}^m \}$.
Since the natural image of $g'_1$ (resp. $g'_2$) in $H_1(S^3 - T(m,n))$ is $h^m$ (resp. $h^n$), we obtain $\phi(g'_1)=m$ (resp. $\phi(g'_2)=n$).
The entry of the matrix $F'_2$ is
\[
\frac{\partial r}{\partial g'_1} = {g'_1}^{n-1} + {g'_1}^{n-2} + \cdots + g'_1 + 1.
\]
We conclude
\[
{\det}_{{\mathcal N}(\Gamma)}(r^{(2)}_{\psi_{|t|}(F'_2)}) \cdot {\det}_{{\mathcal N}(\Gamma)}(r^{(2)}_{\psi_{|t|}(g'_1-1)}) = {\det}_{{\mathcal N}(\Gamma)}(r^{(2)}_{\psi_{|t|}({g'_1}^n-1)})
\]
with
\begin{align*}
& {\det}_{{\mathcal N}(\Gamma)}(r^{(2)}_{\psi_{|t|}(g'_1-1)}) = {\det}_{{\mathcal N}(\Gamma)}(r^{(2)}_{|t|^m \cdot g'_1 - 1}) = \max\{ |t|^m , 1 \} = \max\{ |t| , 1 \}^m, \\
& {\det}_{{\mathcal N}(\Gamma)}(r^{(2)}_{\psi_{|t|}({g'_1}^n-1)}) = {\det}_{{\mathcal N}(\Gamma)}(r^{(2)}_{|t|^{mn} \cdot {g'_1}^n - 1}) = \max\{ |t|^{mn} , 1 \} = \max\{ |t| , 1 \}^{mn}.
\end{align*}
This shows ${\det}_{{\mathcal N}(\Gamma)}(r^{(2)}_{\psi_{|t|}(F'_2)}) = \max\{ |t| , 1 \}^{mn-m}$.
Using Corollary \ref{cor_presentation} we obtain
\begin{eqnarray*}
\Delta^{(2)}_K(t) & = & {\det}_{{\mathcal N}(\Gamma)}(r^{(2)}_{\psi_{|t|}(F'_j)}) \cdot \max\{ |t| , 1 \}^{1 - \phi(g'_2)} \\
& = & \max\{ |t| , 1 \}^{mn-m} \cdot \max\{ |t| , 1 \}^{1-n} \\
& = & \max\{ |t| , 1 \}^{(m-1)(n-1)}.
\end{eqnarray*}
\end{proof}

\begin{remark}
Observe that the power $(m-1)(n-1)$ which appears in the formula of the $L^2$-Alexander invariant of the torus knot $T(m,n)$ is equal to $2 \cdot \mathrm{genus}(T(m,n))$. This property will be discussed in the case of fibered knots in a forthcoming paper by Dubois and Friedl~\cite{DubFried}.
\end{remark} 

\section*{Appendix: Weighted $L^2$-invariants}
\addtocounter{section}{1}

In this section, we study the weighted $L^2$-invariants in details. For a short summary see section \ref{sec_weighted_L2-invariants}.

Let $\varrho \colon G \to \R^{>0}$ be a group homomorphism. Recall that we define an inner product on the complex group ring $\C G$ by setting
\[
\langle \sum_{g \in G} c_g \cdot g \, , \, \sum_{g \in G} d_g \cdot g \rangle_\varrho := \sum_{g \in G} c_g \cdot \overline{d_g} \cdot \varrho(g).
\]
As we have already observed, the Hilbert space completion with respect to the inner product $\langle \, , \, \rangle_\varrho$ is denoted by $l^2(G,\varrho)$.
The complex group ring $\C G$ together with this inner product and the involution
\[
\big( \sum_{g \in G} c_g \cdot g \big)^* := \sum_{g \in G} c_g \cdot \varrho(g) \cdot g^{-1}
\]
satisfies the axioms of a unital Hilbert algebra, i.e.
\begin{enumerate}
 \item $(c \cdot d)^* = d^* \cdot c^*$,
 \item $\langle c \, , \, d \rangle_\varrho = \langle d^* \, , \, c^* \rangle_\varrho$,
 \item $\langle c \cdot d \, , \, e \rangle_\varrho = \langle d \, , \, c^* \cdot e \rangle_\varrho$,
 \item The map $r_c \colon \C G \to \C G$ given by right multiplication with $c \in \C G$ is continuous.
\end{enumerate}
There are three equivalent definitions of the von Neumann algebra ${\mathcal N}(G,\varrho)$:
\begin{enumerate}
\item ${\mathcal N}(G,\varrho)$ is the algebra of all bounded linear endomorphisms of $l^2(G,\varrho)$ that commute with the left $\C G$-action.
\item ${\mathcal N}(G,\varrho)$ is the double commutant of the right $\C G$-action on $l^2(G,\varrho)$.
\item ${\mathcal N}(G,\varrho)$ is the weak closure of $\C G$ acting from the right on $l^2(G,\varrho)$.
\end{enumerate}
We usually refer to the first definition.

The trace of an element $\phi \in {\mathcal N}(G,\varrho)$ is defined by $\tr_{{\mathcal N}(G,\varrho)}(\phi) := \langle \phi(e) \, , \, e \rangle_\varrho$ where $e \in \C G \subset l^2(G,\varrho)$ denotes the unit element. We can extend this trace to $n \times n$-matrices over ${\mathcal N}(G,\varrho)$ by considering the sum of the traces of the entries on the diagonal.

Obviously, for $\varrho$ the trivial group homomorphism we have $l^2(G,\varrho) = l^2(G)$ and ${\mathcal N}(G,\varrho) = {\mathcal N}(G)$.
If $\varrho \neq 1$, it is sometimes useful to refer to the well-known case $l^2(G)$ resp. ${\mathcal N}(G)$.
Notice that the map
\[
\Phi_\varrho \colon \C G \to \C G, \sum_{g \in G} c_g \cdot g \mapsto \sum_{g \in G} c_g \cdot \sqrt{\varrho(g)} \cdot g
\]
induces an isometry $\Phi_\varrho \colon l^2(G,\varrho) \to l^2(G)$ and an isomorphism $\Phi_\varrho \colon {\mathcal N}(G,\varrho) \to {\mathcal N}(G)$.
We have $\tr_{{\mathcal N}(G,\varrho)}(\phi) = \tr_{{\mathcal N}(G)}(\Phi_\varrho(\phi))$.

\subsection{Hilbert ${\mathcal N}(G,\varrho)$-modules}

A finitely generated Hilbert ${\mathcal N}(G,\varrho)$-module $V$ is a Hilbert space $V$ with a linear left $G$-action such that there exists a $\C G$-linear embedding of $V$ into an orthogonal direct sum of a finite number of copies of $l^2(G,\varrho)$. The existence of the embedding implies $\langle g \cdot v \, , \, g \cdot w \rangle = \varrho(g) \cdot \langle v \, , \, w \rangle$ for all $g \in G$, $v,w \in V$.
A map of Hilbert ${\mathcal N}(G,\varrho)$-modules is a bounded $\C G$-linear map. It is weakly surjective if it has dense image, it is a weak isomorphism if it is injective and weakly surjective.

The von Neumann dimension of a finitely generated Hilbert ${\mathcal N}(G,\varrho)$-module $V$ is defined by $\dim_{{\mathcal N}(G,\varrho)}(V) := \tr_{{\mathcal N}(G,\varrho)}(\pr_V) \in \R^{>0}$. Here $\pr_V \colon \oplus_{i=1}^k l^2(G,\varrho) \to \oplus_{i=1}^k l^2(G,\varrho)$ denotes the orthogonal projection onto $V$. The von Neumann dimension $\dim_{{\mathcal N}(G,\varrho)}(V)$ does not depend on the choice of the embedding of $V$ into a finite number of copies of $l^2(G,\varrho)$. We list some properties of the von Neumann dimension:
\begin{enumerate}
\item $\dim_{{\mathcal N}(G,\varrho)}(V) = 0$ if and only if $V=0$.
\item $\dim_{{\mathcal N}(G,\varrho)}(l^2(G,\varrho)) = 1$
\item If $G$ is finite then $\dim_{{\mathcal N}(G,\varrho)}(V) = \dim_\C(V) / |G|$.
\item If $0 \to U \to V \to W \to 0$ is a weakly exact sequence of finitely generated Hilbert ${\mathcal N}(G,\varrho)$-modules then
\[
\dim_{{\mathcal N}(G,\varrho)}(V) = \dim_{{\mathcal N}(G,\varrho)}(U) + \dim_{{\mathcal N}(G,\varrho)}(W).
\] \label{dim_ses}
\end{enumerate}

We can identify the category of finitely generated Hilbert ${\mathcal N}(G,\varrho)$-modules with the category of finitely generated Hilbert ${\mathcal N}(G)$-modules using the functor $\Psi_\varrho$ which we define next. Let $V$ be a finitely generated Hilbert ${\mathcal N}(G,\varrho)$-module. Then $\Psi_\varrho(V)$ is defined as the Hilbert space $V$ with the $G$-action given by $g(v) := \frac{1}{\sqrt{\varrho(g)}} \cdot g \cdot v$. For a morphism $f$ we set $\Psi_\varrho(f) := f$.
We obtain an isometry
\[
l^2(G) \to \Psi_\varrho(l^2(G,\varrho)), \sum_{g \in G} c_g \cdot g \mapsto \sum_{g \in G} \frac{c_g}{\sqrt{\varrho(g)}} \cdot g.
\]
Notice that
\begin{align*}
\dim_{{\mathcal N}(G,\varrho)}(V) & = \tr_{{\mathcal N}(G,\varrho)}(\pr_V) = \tr_{{\mathcal N}(G)}(\Phi_\varrho(\pr_V)) = \\
& = \tr_{{\mathcal N}(G)}(\pr_{\Psi_\varrho(V)}) = \dim_{{\mathcal N}(G)}(\Psi_\varrho(V)).
\end{align*}

\begin{remark}[Restriction]
Let $H < G$ be a subgroup of finite index. We obtain a functor $\res_H$ from the category of finitely generated Hilbert ${\mathcal N}(G,\varrho)$-modules to the category of finitely generated Hilbert ${\mathcal N}(H,\varrho|_H)$-modules by restricting the left $G$-action to an $H$-action.
For a right transversal $T$ of $H$ in $G$ we obtain a $\C H$-linear isometry
\begin{eqnarray*}
\bigoplus_{g \in T} l^2(H,\varrho|_H) & \to & \res_H(l^2(G,\varrho)),\\
\big\{ v_g \, \big| \, g \in T \big\} & \mapsto & \sum_{g \in T} \frac{1}{\sqrt{\varrho(g)}} \cdot v_g \cdot g.
\end{eqnarray*}
A simple calculation shows
\[
\tr_{{\mathcal N}(H,\varrho|_H)}\big( \res_H(\phi) \big) = [G:H] \cdot \tr_{{\mathcal N}(G,\varrho)}(\phi)
\]
for $\phi \colon l^2(G)^n \to l^2(G)^n$.
We conclude
\[
\dim_{{\mathcal N}(H,\varrho|_H)}(\res_H(V)) = [G:H] \cdot \dim_{{\mathcal N}(G,\varrho)}(V).
\]
Notice that $\res_H \circ \Psi_\varrho = \Psi_{\varrho|_H} \circ \res_H$.
\end{remark}

\begin{remark}[Induction]
Let $\varrho \colon G \to \R^{>0}$ be a group homomorphism and $H < G$ a subgroup. We can assign to a finitely generated Hilbert ${\mathcal N}(H,\varrho|_H)$-module $V$ a finitely generated Hilbert ${\mathcal N}(G,\varrho)$-module $\ind_G(V)$ as follows. Let $T$ be a left transversal of $H$ in $G$. There is a pre-Hilbert structure on $\C G \otimes_{\C H} V$ given by
\[
\langle \sum_{g \in T} g \otimes v_g \, , \, \sum_{g \in T} g \otimes w_g \rangle := \sum_{g \in T} \varrho(g) \cdot \langle v_g \, , \, w_g \rangle_{\varrho|_H}.
\]
Notice that this definition does not depend on the choice of the left transversal. The Hilbert space completion of $\C G \otimes_{\C H} V$ is a finitely generated ${\mathcal N}(G,\varrho)$-module which we denote by $\ind_G(V)$.
Notice that $\ind_G(l^2(H,\varrho|_H)) = l^2(G,\varrho)$. Right multiplication with a matrix $A \in M(m \times n; \C H) \subset M(m \times n; \C G)$ defines morphisms $r^{(2)}_A \colon l^2(H,\varrho|_H)^m \to l^2(H,\varrho|_H)^n$ and $r^{(2)}_A \colon l^2(G,\varrho)^m \to l^2(H,\varrho)^n$ such that $\ind_G(r^{(2)}_A) = r^{(2)}_A$.
We calculate
\[
\tr_{{\mathcal N}(G,\varrho)}(\ind_G(\phi)) = \tr_{{\mathcal N}(H,\varrho|_H)}(\phi)
\]
for $\phi \colon l^2(H)^n \to l^2(H)^n$ and conclude
\[
\dim_{{\mathcal N}(G,\varrho)}(\ind_G(V)) = \dim_{{\mathcal N}(H,\varrho|_H)}(V).
\]
We have a $\C G$-linear isometry $\ind_G(\Psi_{\varrho|_H}(V)) \to \Psi_\varrho(\ind_G(V)), c \otimes v \mapsto \Phi_\varrho(c) \otimes v$.
\end{remark}

\subsection{Weighted $L^2$-Betti numbers}

\begin{definition}
Let $(C_*,c_*)$ be a finite Hilbert ${\mathcal N}(G,\varrho)$-chain complex. We define its (reduced) $L^2$-homology and its $L^2$-Betti numbers by
\begin{eqnarray*}
H^{(2)}_{\varrho,n}(C_*) & :=  & \ker(c_n) / \overline{\im(c_{n+1})},\\
b^{(2)}_{\varrho,n}(C_*) & :=  & \dim_{{\mathcal N}(G,\varrho)}\big(H^{(2)}_{\varrho,n}(C_*)\big).
\end{eqnarray*}
\end{definition}

Notice that $\Psi_\varrho(H^{(2)}_{\varrho,n}(C_*)) = H^{(2)}_n(\Psi_\varrho(C_*))$ and $b^{(2)}_{\varrho,n}(C_*) = b^{(2)}_n(\Psi_\varrho(C_*))$.

\begin{proposition} \label{prop_ex-seq}
\begin{enumerate}
\item An exact sequence $0 \to C_* \to D_* \to E_* \to 0$ of finite Hilbert ${\mathcal N}(G,\varrho)$-chain complexes induces a long homology sequence
\[
\cdots \to H^{(2)}_{\varrho,n+1}(E_*) \to H^{(2)}_{\varrho,n}(C_*) \to H^{(2)}_{\varrho,n}(D_*) \to H^{(2)}_{\varrho,n}(E_*) \to H^{(2)}_{\varrho,n-1}(C_*) \to \cdots
\]
which is weakly exact. \label{prop_ex-seq-1}
\item Let $f_* \colon C_* \to D_*$ be a homotopy equivalence of finite Hilbert ${\mathcal N}(G,\varrho)$-chain complexes. Then
\[
b^{(2)}_{\varrho,n}(C_*) = b^{(2)}_{\varrho,n}(D_*).
\] \label{prop_ex-seq-2}
\end{enumerate}
\end{proposition}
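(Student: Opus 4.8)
The plan is to reduce both statements to the corresponding classical facts for ordinary finite Hilbert $\mathcal{N}(G)$-chain complexes by applying the functor $\Psi_\varrho$ introduced above. The key point, already recorded, is that $\Psi_\varrho$ identifies the category of finitely generated Hilbert $\mathcal{N}(G,\varrho)$-modules with that of finitely generated Hilbert $\mathcal{N}(G)$-modules, acts as the identity on underlying Hilbert spaces and on morphisms, preserves von Neumann dimension (hence the notion of weak isomorphism and of weak exactness), and satisfies $\Psi_\varrho(H^{(2)}_{\varrho,n}(C_*)) = H^{(2)}_n(\Psi_\varrho(C_*))$ and $b^{(2)}_{\varrho,n}(C_*) = b^{(2)}_n(\Psi_\varrho(C_*))$. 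Since a chain complex, an exact sequence of chain complexes, a chain map and a chain homotopy are all notions formulated purely in terms of the operators and the linear relations among them, applying $\Psi_\varrho$ degreewise carries each such datum over a weighted von Neumann algebra to the analogous datum over $\mathcal{N}(G)$ with literally the same differentials and structure maps.

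For part \eqref{prop_ex-seq-1} I would apply $\Psi_\varrho$ to the given short exact sequence $0 \to C_* \to D_* \to E_* \to 0$ of finite Hilbert $\mathcal{N}(G,\varrho)$-chain complexes to obtain a short exact sequence $0 \to \Psi_\varrho(C_*) \to \Psi_\varrho(D_*) \to \Psi_\varrho(E_*) \to 0$ of finite Hilbert $\mathcal{N}(G)$-chain complexes. The classical long weakly exact $L^2$-homology sequence (see \cite[Theorem 1.21]{Luc02}) then supplies the desired weakly exact sequence for the $\Psi_\varrho(-)$; translating back via $\Psi_\varrho(H^{(2)}_{\varrho,n}(-)) = H^{(2)}_n(\Psi_\varrho(-))$ and using that $\Psi_\varrho$ reflects weak exactness gives the claimed long sequence for $0 \to C_* \to D_* \to E_* \to 0$. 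The only thing to note is that the connecting homomorphism built downstairs is a bounded $\C G$-linear map of the corresponding Hilbert $\mathcal{N}(G,\varrho)$-modules, which is immediate because it is the very same operator as in the unweighted construction.

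For part \eqref{prop_ex-seq-2} I would apply $\Psi_\varrho$ to the homotopy equivalence $f_* \colon C_* \to D_*$, obtaining a homotopy equivalence $\Psi_\varrho(f_*) \colon \Psi_\varrho(C_*) \to \Psi_\varrho(D_*)$ of finite Hilbert $\mathcal{N}(G)$-chain complexes (the homotopy inverse and the chain homotopies transport verbatim). Homotopy invariance of ordinary $L^2$-Betti numbers then yields $b^{(2)}_n(\Psi_\varrho(C_*)) = b^{(2)}_n(\Psi_\varrho(D_*))$ for all $n$, and the identities $b^{(2)}_{\varrho,n}(C_*) = b^{(2)}_n(\Psi_\varrho(C_*))$ and $b^{(2)}_{\varrho,n}(D_*) = b^{(2)}_n(\Psi_\varrho(D_*))$ finish the proof.

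There is no analytic difficulty here: $\Psi_\varrho$ changes nothing at the level of operators or spectral data, so no estimate or spectral argument is needed. I expect the only real work to be the bookkeeping in the first paragraph, i.e.\ confirming that $\Psi_\varrho$ is compatible with each categorical ingredient (short exact sequences, chain homotopies, the connecting map, weak exactness) appearing in the classical statements — which is straightforward but should be stated carefully.
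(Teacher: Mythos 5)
Your proposal is correct, and for part~(\ref{prop_ex-seq-1}) it is exactly the paper's argument: apply $\Psi_\varrho$ degreewise and invoke the weakly exact long homology sequence for finite Hilbert ${\mathcal N}(G)$-chain complexes from \cite[Theorem 1.21]{Luc02}, using that $\Psi_\varrho$ preserves dimensions and hence weak exactness. For part~(\ref{prop_ex-seq-2}) your route differs slightly in where the work happens: you transfer the homotopy equivalence downstairs via $\Psi_\varrho$ and quote chain-level homotopy invariance of ordinary $L^2$-Betti numbers, whereas the paper stays in the weighted category and argues directly with the short exact sequence $0 \to D_* \to \cone_*(f_*) \to C_{*-1} \to 0$: the cone is contractible, so its $L^2$-homology vanishes, and the long weakly exact sequence from part~(\ref{prop_ex-seq-1}) yields a weak isomorphism $H^{(2)}_{\varrho,n}(C_*) \to H^{(2)}_{\varrho,n}(D_*)$ and hence equality of Betti numbers. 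The two are equivalent in substance --- the unweighted homotopy-invariance statement you cite is itself proved by the same cone argument --- but the paper's version has the small advantage of being self-contained modulo part~(\ref{prop_ex-seq-1}), while yours is more uniformly a reduction via $\Psi_\varrho$. Either is acceptable; if you keep your version, you should pin down a precise reference for chain-level (as opposed to space-level) homotopy invariance of $L^2$-Betti numbers in the unweighted setting.
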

\begin{proof}
\begin{enumerate}
\item We can use the functor $\Psi_\varrho$ from the category of finitely generated Hilbert ${\mathcal N}(G,\varrho)$-modules to the category of finitely generated Hilbert ${\mathcal N}(G)$-modules. Hence it suffices to show the statement for finite Hilbert ${\mathcal N}(G)$-chain complexes. This is done in \cite[Theorem 1.21]{Luc02}.
\item Consider the short exact sequence $0 \to D_* \to \cone_*(f_*) \to C_{*-1} \to 0$ and the induced long homology sequence. Since $\cone_*(f_*)$ is contractible, we conclude $H^{(2)}_{\varrho,n}(\cone_*(f_*)) = 0$. We obtain a weak isomorphism $H^{(2)}_{\varrho,n}(C_*) \to H^{(2)}_{\varrho,n}(D_*)$. This implies $b^{(2)}_{\varrho,n}(C_*) = b^{(2)}_{\varrho,n}(D_*)$.
\end{enumerate}
\end{proof}

\begin{definition} \label{def-L2-Betti}
Let $X$ be a finite free $G$-CW-complex and $\varrho \colon G \to \R^{>0}$ a group homomorphism.
Notice that the cellular chain complex $C^{cell}_*(X)$ is a free $\Z G$-chain complex and has a cellular $\Z G$-basis which is unique up to permutation and multiplication with $\pm g \in \Z G$ for $g \in G$. After fixing such a basis we can define the Hilbert ${\mathcal N}(G,\varrho)$-chain complex
\[
C^{(2)}_{\varrho,*}(X) := l^2(G,\varrho) \otimes_{\Z G} C^{cell}_*(X).
\]
We define the weighted $L^2$-Betti numbers of $X$ by
\[
b^{(2)}_{\varrho,n}(X) := b^{(2)}_{\varrho,n}\big(C^{(2)}_{\varrho,*}(X)\big).
\]
\end{definition}

Notice that $b^{(2)}_{\varrho,n}(X)$ does not depend on the choice of the cellular basis. If we take another basis then we obtain an isomorphic Hilbert ${\mathcal N}(G,\varrho)$-chain complex. Proposition \ref{prop_ex-seq} (\ref{prop_ex-seq-2}) implies that the $L^2$-Betti numbers coincide.

Obviously, for the trivial group homomorphism we obtain $b^{(2)}_{1,n}(X) = b^{(2)}_n(X)$.

\begin{example} \label{ex-S1-Betti}
The universal covering $\widetilde{S^1}$ of $S^1$ is a finite $\pi_1(S^1)$-chain complex. We write $\Z \cong \pi_1(S^1) = \langle t \rangle$. Let $\varrho \colon \pi_1(S^1) \to \R^{>0}$ be a group homomorphism.
The cellular chain complex $C^{cell}_*(\widetilde{S^1})$ is given by
\[
\Z \langle t \rangle \stackrel{\cdot (t-1)}{\longrightarrow} \Z \langle t \rangle.
\]
We obtain the following Hilbert ${\mathcal N}(G,\varrho)$-chain complex $C^{(2)}_{\varrho,*}(\widetilde{S^1})$:
\[
l^2(G,\varrho) \stackrel{\cdot (t-1)}{\longrightarrow} l^2(G,\varrho).
\]
An easy calculation shows that the multiplication with $t-1$ is injective. Hence $b^{(2)}_{\varrho,1}(\widetilde{S^1})=0$.
We conclude from the properties of the dimension function that every injective endomorphism of a finitely generated Hilbert ${\mathcal N}(G,\varrho)$-module is a weak isomorphism.
This shows $b^{(2)}_{\varrho,0}(\widetilde{S^1})=0$.
\end{example}

The following proposition is taken from \cite[Theorem 1.35 and Theorem 1.40]{Luc02} where it is stated and proven for the case $\varrho = 1$.
\begin{proposition} \label{properties-Betti}
\begin{enumerate}
\item Homotopy invariance \\ \label{properties-Betti-1}
Let $f \colon X \to Y$  be a $G$-map of finite free $G$-CW-complexes and let $\varrho \colon G \to \R^{>0}$ be a group homomorphism. If the map induced on homology with complex coefficients $H_n(f;\C) \colon H_n(X;\C) \to H_n(Y;\C)$ is bijective for $n < d$ and surjective for $n = d$ then
\begin{eqnarray*}
b^{(2)}_{\varrho,n}(X) & = & b^{(2)}_{\varrho,n}(Y) \quad \textrm{for} \; n < d, \\
b^{(2)}_{\varrho,d}(X) & \geq & b^{(2)}_{\varrho,d}(Y).
\end{eqnarray*}
In particular, if $f$ is a weak homotopy equivalence then
\[
b^{(2)}_{\varrho,n}(X) = b^{(2)}_{\varrho,n}(Y) \quad \textrm{for all} \; n.
\]
\item Euler-Poincar\'e formula \\ \label{properties-Betti-2}
Let $X$ be a finite free $G$-CW-complex. Let $\chi(G \setminus X)$ be the Euler characteristic of the finite CW-complex $G \setminus X$, i.e.
\[
\chi(G \setminus X) := \sum_{n \geq 0} (-1)^n \cdot \beta_n(G \setminus X) \in \Z
\]
where $\beta_n(G \setminus X)$ is the number of $n$-cells of $G \setminus X$.
Then
\[
\chi(G \setminus X) = \sum_{n \geq 0} (-1)^n \cdot b^{(2)}_{\varrho,n}(X).
\]
\item Poincar\'e duality \\ \label{properties-Betti-3}
Let $M$ be a cocompact free proper $G$-manifold of dimension $m$ which is orientable. Then
\[
b^{(2)}_{\varrho,n}(M) = b^{(2)}_{\varrho,m-n}(M, \partial M).
\]
\item Wedge \\ \label{properties-Betti-4}
Let $X_1, X_2$ be finite connected pointed CW-complexes and $X = X_1 \vee X_2$ be their wedge.
Let $\varrho \colon \pi_1(X) = \pi_1(X_1) \ast \pi_1(X_2) \to \R^{>0}$ be a group homomorphism.
Then
\begin{eqnarray*}
b^{(2)}_{\varrho,1}(\widetilde{X}) - b^{(2)}_{\varrho,0}(\widetilde{X}) + 1 & = & \sum_{i=1}^2 \big( b^{(2)}_{\varrho|_{\pi_1(X_i),1}}(\widetilde{X_i}) - b^{(2)}_{\varrho|_{\pi_1(X_i),0}}(\widetilde{X_i}) + 1 \big), \\
b^{(2)}_{\varrho,n}(\widetilde{X}) & = & b^{(2)}_{\varrho|_{\pi_1(X_1),n}}(\widetilde{X_1}) + b^{(2)}_{\varrho|_{\pi_1(X_2),n}}(\widetilde{X_2}) \quad \textrm{for} \; n \geq 2.
\end{eqnarray*}
\item Connected sum \\ \label{properties-Betti-5}
Let $M_1, M_2$ be compact connected $m$-dimensional manifolds with $m \geq 3$ and $M = M_1 \# M_2$ be their connected sum.
Let $\varrho \colon \pi_1(M) \cong \pi_1(M_1) \ast \pi_1(M_2) \to \R^{>0}$ be a group homomorphism.
Then
\begin{eqnarray*}
b^{(2)}_{\varrho,1}(\widetilde{M}) - b^{(2)}_{\varrho,0}(\widetilde{M}) + 1 & = & \sum_{i=1}^2 \big( b^{(2)}_{\varrho|_{\pi_1(M_i),1}}(\widetilde{M_i}) - b^{(2)}_{\varrho|_{\pi_1(M_i),0}}(\widetilde{M_i}) + 1 \big), \\
b^{(2)}_{\varrho,n}(\widetilde{M}) & = & b^{(2)}_{\varrho|_{\pi_1(M_1),n}}(\widetilde{M_1}) + b^{(2)}_{\varrho|_{\pi_1(M_2),n}}(\widetilde{M_2}) \quad \textrm{for} \; n \geq 2.
\end{eqnarray*}
\item Zero-th $L^2$-Betti number \\ \label{properties-Betti-6}
Let $X$ be a connected finite free $G$-CW-complex. Then
\[
b^{(2)}_{\varrho,0}(X) = \frac{1}{|G|}
\]
where $\frac{1}{|G|}$ is to be understood to be zero if the order $|G|$ of $G$ is infinite.
\item Restriction \\ \label{properties-Betti-7}
Let $X$ be a finite free $G$-CW-complex and let $H < G$ be a subgroup of finite index.
Let $\res_H(X)$ be the finite free $H$-CW-complex obtained from $X$ by restricting the $G$-action to an $H$-action.
We have
\[
b^{(2)}_{\varrho|_H,n}(\res_H(X)) = [G:H] \cdot b^{(2)}_{\varrho,n}(X).
\]
\item Induction \\ \label{properties-Betti-8}
Let $X$ be a finite free $H$-CW-complex and let $H < G$ be a subgroup.
Then $G \times_H X$ is a finite free $G$-CW-complex and
\[
b^{(2)}_{\varrho,n}(G \times_H X) = b^{(2)}_{\varrho|_H,n}(X).
\]
\item $S^1$-actions \\ \label{properties-Betti-9}
Let $X$ be a connected $S^1$-CW-complex of finite type, for instance a connected compact manifold with smooth $S^1$-action. Suppose that for one orbit $S^1/H$ (and hence for all orbits) the inclusion into $X$ induces a map on $\pi_1$ with infinite image. (In particular the $S^1$-action has no fixed points.) Let $\widetilde{X}$ be the universal covering of $X$ with the canonical $\pi_1(X)$-action. Then we have $b^{(2)}_{\varrho,n}(\widetilde{X}) = 0$.
\end{enumerate}
\end{proposition}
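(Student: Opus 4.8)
The plan is to derive all nine items from the known case $\varrho = 1$, which is \cite[Theorems 1.35 and 1.40]{Luc02}, by means of the functor $\Psi_\varrho$: it is an equivalence between finitely generated Hilbert ${\mathcal N}(G,\varrho)$-modules and finitely generated Hilbert ${\mathcal N}(G)$-modules which preserves the von Neumann dimension and weak exactness and satisfies $b^{(2)}_{\varrho,n}(C_*) = b^{(2)}_n(\Psi_\varrho(C_*))$ for every finite Hilbert ${\mathcal N}(G,\varrho)$-chain complex $C_*$. The first step is to identify $\Psi_\varrho\big(C^{(2)}_{\varrho,*}(X)\big)$. One checks that $\Phi_\varrho \colon \C G \to \C G$ is a $*$-ring automorphism (since $\varrho$ is multiplicative), and that the isometry $l^2(G) \to \Psi_\varrho(l^2(G,\varrho))$ conjugates right multiplication by a matrix $A$ over $\Z G$ into right multiplication by $\Phi_\varrho(A)$. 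Hence, writing the cellular differentials of $C^{cell}_*(X)$ as $r^{(2)}_{A_n}$, the complex $\Psi_\varrho(C^{(2)}_{\varrho,*}(X))$ is the $l^2$-completion over ${\mathcal N}(G)$ of the free $\C G$-chain complex obtained from $C^{cell}_*(X) \otimes_\Z \C$ by applying $\Phi_\varrho$ entrywise to every differential matrix. This twist is not harmless --- already for $\widetilde{S^1}$ it changes the homology as a $\C G$-module --- which is exactly why the independence statement of Conjecture~\ref{conj-Betti} is not immediate.

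For the items of homotopy-theoretic nature, namely~(\ref{properties-Betti-1}), (\ref{properties-Betti-3}), (\ref{properties-Betti-4}) and~(\ref{properties-Betti-5}), I would argue that every relation between matrices over $\C G$ that enters L\"uck's proofs --- the identity saying that a map is a chain map, a chain homotopy equation, exactness of a Mayer--Vietoris sequence, a statement about $H_*(-;\C)$ or about the Poincar\'e duality chain equivalence --- is preserved after applying the $*$-ring automorphism $\Phi_\varrho$ entrywise. Consequently L\"uck's arguments carry over verbatim to the twisted complexes $\Psi_\varrho(C^{(2)}_{\varrho,*}(X))$, and combining this with the weakly exact long homology sequence and the homotopy invariance of $L^2$-Betti numbers of chain complexes (Proposition~\ref{prop_ex-seq}), then translating back through $\Psi_\varrho$, yields the claimed identities.

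Item~(\ref{properties-Betti-2}) follows, just as in L\"uck, from the additivity of $\dim_{{\mathcal N}(G,\varrho)}$ along the weakly exact sequences built from $C^{(2)}_{\varrho,*}(X)$ together with $\dim_{{\mathcal N}(G,\varrho)}(l^2(G,\varrho)) = 1$. Items~(\ref{properties-Betti-7}) and~(\ref{properties-Betti-8}) should follow formally from the Restriction and Induction constructions above, using $\res_H \circ \Psi_\varrho = \Psi_{\varrho|_H} \circ \res_H$, $\ind_G \circ \Psi_{\varrho|_H} \cong \Psi_\varrho \circ \ind_G$ and the trace identities recorded there. For item~(\ref{properties-Betti-6}) I would compute $H^{(2)}_{\varrho,0}(\widetilde{X})$ by hand: reducing to a connected $X$ with a single $0$-cell, this reduced homology is $l^2(G,\varrho)$ modulo the closure of the image of $c_1$, and its orthogonal complement consists of the $w = \sum_{g \in G} w_g\, g$ with $w_g = \varrho(g)^{-1} w_e$; such a $w$ lies in $l^2(G,\varrho)$ only when $|w_e|^2 \cdot \sum_{g \in G} \varrho(g)^{-1} < \infty$, which forces $w = 0$ whenever $G$ is infinite, while for finite $G$ (where $\varrho$ is necessarily trivial) one gets $\C$, of von Neumann dimension $1/|G|$.

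The hard part will be item~(\ref{properties-Betti-9}), the vanishing for $S^1$-CW-complexes. L\"uck's proof proceeds through the orbit structure of an $S^1$-CW-complex, passing to a finite covering which is the total space of an $S^1$-principal bundle, and then uses $b^{(2)}_{\varrho,n}(\widetilde{S^1}) = 0$ (Example~\ref{ex-S1-Betti}) via transfer and multiplicativity arguments. To run this in the weighted setting one must verify at each step the compatibility with $\Psi_\varrho$ and with restricting $\varrho$ along the covering and along the bundle projection, and that the auxiliary $\Phi_\varrho$-twisted chain complexes retain the product structure that L\"uck exploits. This is where I expect the genuine work to lie; once the $\Phi_\varrho$-twist is under control, the remaining eight items are essentially bookkeeping.
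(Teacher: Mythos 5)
Your overall strategy --- transport everything through $\Psi_\varrho$, observe that the cellular differentials get twisted entrywise by $\Phi_\varrho$, and re-run L\"uck's arguments on the twisted complexes --- is exactly the strategy of the paper's proof. Your direct computation for item~(\ref{properties-Betti-6}) is correct (the orthogonal complement of $\overline{\im(c_1)}$ consists of the $w$ with $w_g = \varrho(g)^{-1} w_e$, and $\|w\|_\varrho^2 = |w_e|^2 \sum_g \varrho(g)^{-1}$ diverges for infinite $G$ whether or not $\varrho = 1$) and is a pleasant alternative to the paper's cohomological route. Items~(\ref{properties-Betti-1}), (\ref{properties-Betti-2}), (\ref{properties-Betti-4}), (\ref{properties-Betti-5}), (\ref{properties-Betti-7}), (\ref{properties-Betti-8}) do go through as you say, because chain maps, partial chain contractions and Mayer--Vietoris sequences are matrix identities over $\C G$ and $\Phi_\varrho$ is a ring homomorphism; and your treatment of~(\ref{properties-Betti-9}) is at the same level of detail as the paper's, the essential inputs being Example~\ref{ex-S1-Betti} and the induction property~(\ref{properties-Betti-8}).

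The one place where your blanket argument fails as stated is Poincar\'e duality, item~(\ref{properties-Betti-3}). You call $\Phi_\varrho$ a ``$*$-ring automorphism'', but it intertwines the \emph{weighted} involution $a = \sum c_g g \mapsto \sum \overline{c_g}\,\varrho(g)\, g^{-1}$ on the source with the standard involution on the target; it does not commute with the standard involution $g \mapsto g^{-1}$ (one has $\Phi_\varrho(g^{-1}) = \varrho(g)^{-1/2} g^{-1}$, whereas the standard adjoint of $\Phi_\varrho(g)$ is $\varrho(g)^{1/2} g^{-1}$). The duality chain equivalence lands in the algebraic dual complex $\hom_{\Z G}(C_{m-*}(\widetilde{M},\partial\widetilde{M}),\Z G)$, whose differentials are the standard conjugate transposes $\overline{A}^{T}$ of the cellular differentials. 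After completing in $l^2(G,\varrho)$ this is \emph{not} the Hilbert-space adjoint complex of $C^{(2)}_{\varrho,*}(\widetilde{M},\partial\widetilde{M})$: the adjoint of $r^{(2)}_A$ on $l^2(G,\varrho)$ is right multiplication by the weighted conjugate transpose, which equals $\Phi_{\varrho^{-2}}(\overline{A}^{T})$, so a literal transport of L\"uck's Hodge-theoretic identification produces the $\varrho^{-1}$-weighted Betti numbers on the right-hand side rather than the $\varrho$-weighted ones. This is precisely why the paper's proof inserts a separate remark here: one must define weighted $L^2$-cohomology as $\hom_{\Z G}(C^{cell}_*,l^2(G,\varrho))$, identify it with the adjoint complex of $C^{(2)}_{\varrho,*}$ via the (anti-linear, isometric) weighted involution, and then apply the Laplace operator, all for the same cellular $\Z G$-basis. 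Your write-up needs to isolate and carry out this step rather than subsume it under ``the relations are preserved by $\Phi_\varrho$''.
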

\begin{proof}
For a proof in the case $\varrho = 1$ we refer to \cite[Theorem 1.35 and Theorem 1.40]{Luc02}. The proof in the general case is very similar. We should add some remarks:
\begin{itemize}
\item The proofs of the assertions ``Poincar\'e duality'' and ``Zero-th $L^2$-Betti number'' use $L^2$-cohomology. $L^2$-cohomology is defined by the cellular $L^2$-cochain complex $C_{(2)}^{\varrho,*}(X) := \hom_{\Z G}(C^{cell}_*(X),l^2(G,\varrho))$ instead of the $L^2$-chain complex $C^{(2)}_{\varrho,*}(X)$. Using the Laplace operator we conclude that the Hilbert ${\mathcal N}(G,\varrho)$-modules $H^{(2)}_{\varrho,n}(C^{(2)}_{\varrho,*}(X))$ and $H_{(2)}^{\varrho,n}(C_{(2)}^{\varrho,*}(X))$ are isometrically isomorphic if we choose the same cellular $\Z G$-basis for $C^{cell}_*(X)$ (compare \cite[Lemma 1.18]{Luc02}).
\item For the ``Zero-th $L^2$-Betti number'' notice that we have $\varrho = 1$ if $G$ is finite.
\item For the statement about ``Restriction'' we can use the isomorphism of Hilbert ${\mathcal N}(H,\varrho|_H)$-chain complexes
\[
C^{(2)}_{\varrho|_H,*}(\res_H(X)) \to \res_H(C^{(2)}_{\varrho,*}(X))
\]
induced by the identity. It does not matter that this isomorphism is not an isometry in general because of Proposition \ref{prop_ex-seq} (\ref{prop_ex-seq-2}).
\end{itemize}
\end{proof}

In many examples we see that $b^{(2)}_{\varrho,n}(X)$ is independent of $\varrho$. This leads to the following conjecture.

\begin{conjecture} \label{conj-Betti}
Let $X$ be a finite free $G$-CW-complex.
For any group homomorphism $\varrho \colon G \to \R^{>0}$ we have
\[
b^{(2)}_{\varrho,n}(X) = b^{(2)}_n(X).
\]
\end{conjecture}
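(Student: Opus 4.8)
\emph{Reduction.} I would first turn the statement into a purely algebraic one. By $b^{(2)}_{\varrho,n}(C_*) = b^{(2)}_n(\Psi_\varrho(C_*))$ and the fact that, under the canonical isometry $l^2(G)^m \cong \Psi_\varrho(l^2(G,\varrho))^m$, the functor $\Psi_\varrho$ carries $r^{(2)}_A$ (right multiplication by a matrix $A$ over $\C G$) to $r^{(2)}_{\Phi_\varrho(A)}$, the Hilbert $\mathcal{N}(G)$-chain complex $\Psi_\varrho(C^{(2)}_{\varrho,*}(X))$ is precisely the one obtained from the cellular $\Z G$-chain complex $C^{cell}_*(X)$ by applying $\Phi_\varrho \colon \C G \to \C G$, $\sum_g c_g g \mapsto \sum_g c_g \sqrt{\varrho(g)}\, g$, entrywise to each boundary matrix and then $l^2$-completing. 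Since $\varrho$ takes values in the abelian group $\R^{>0}$, the map $\Phi_\varrho$ is a ring \emph{automorphism} of $\C G$ (with inverse $\Phi_{\varrho^{-1}}$) of a very rigid, ``diagonal'' kind: it merely rescales each group element $g$ by the positive scalar $\sqrt{\varrho(g)}$. Writing $A_n$ for the $n$-th cellular boundary matrix, $c_n$ for the number of $n$-cells of $G\setminus X$, and $\mathrm{rank}_{\mathcal{N}(G)}(r^{(2)}_A) := \dim_{\mathcal{N}(G)}\overline{\im(r^{(2)}_A)}$, one has $b^{(2)}_{\varrho,n}(X) = c_n - \mathrm{rank}_{\mathcal{N}(G)}(r^{(2)}_{\Phi_\varrho(A_n)}) - \mathrm{rank}_{\mathcal{N}(G)}(r^{(2)}_{\Phi_\varrho(A_{n+1})})$, so Conjecture~\ref{conj-Betti} reduces to showing $\mathrm{rank}_{\mathcal{N}(G)}(r^{(2)}_{\Phi_\varrho(A)}) = \mathrm{rank}_{\mathcal{N}(G)}(r^{(2)}_A)$ for every matrix $A$ over $\Z G$.

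\emph{The amenable case.} If $G$ is amenable, the von Neumann rank is computable by F\o lner approximation (see \cite{Luc02}): $\mathrm{rank}_{\mathcal{N}(G)}(r^{(2)}_A) = \lim_F \frac{1}{|F|}\,\mathrm{rank}_\C(A|_F)$, where $A|_F$ is the matrix of the compression of $r^{(2)}_A$ to the finite-dimensional subspace spanned by the translates $f\cdot e_i$, $f\in F$, of the standard basis vectors, and $F$ ranges over an exhausting F\o lner sequence. Now the $((f,i),(f',j))$-entry of $\Phi_\varrho(A)|_F$ equals $\sqrt{\varrho(f')}/\sqrt{\varrho(f)}$ times that of $A|_F$; hence $\Phi_\varrho(A)|_F$ is obtained from $A|_F$ by scaling rows and columns by nonzero reals, so $\mathrm{rank}_\C(\Phi_\varrho(A)|_F) = \mathrm{rank}_\C(A|_F)$ for every $F$. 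Passing to the limit yields $\mathrm{rank}_{\mathcal{N}(G)}(r^{(2)}_{\Phi_\varrho(A)}) = \mathrm{rank}_{\mathcal{N}(G)}(r^{(2)}_A)$, so the conjecture holds for all amenable $G$.

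\emph{The case of torsion-free groups satisfying the Atiyah conjecture.} If $G$ is torsion-free and satisfies the strong Atiyah conjecture, then $\C G$ embeds into a skew field $\mathcal{D}(G)$ --- its division closure inside the algebra of operators affiliated to $\mathcal{N}(G)$ --- and $\mathrm{rank}_{\mathcal{N}(G)}(r^{(2)}_A)$ coincides with the ordinary rank of $A$ as a matrix over $\mathcal{D}(G)$. If moreover $\mathcal{D}(G)$ has the universal property of Cohn's universal division ring of fractions of $\C G$ --- which is the case for locally indicable $G$, and in particular for knot groups, for which the strong Atiyah conjecture is also known --- then the automorphism $\Phi_\varrho$ of $\C G$ extends uniquely to an automorphism of $\mathcal{D}(G)$. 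Since automorphisms of a skew field preserve ranks of matrices, $\mathrm{rank}_{\mathcal{D}(G)}(\Phi_\varrho(A)) = \mathrm{rank}_{\mathcal{D}(G)}(A)$, and again $b^{(2)}_{\varrho,n}(X) = b^{(2)}_n(X)$ for all $n$.

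\emph{The main obstacle.} Both arguments are ways of making sense of ``conjugating by the diagonal rescaling operator $D\colon g\mapsto \sqrt{\varrho(g)}\,g$'' --- at the level of finite F\o lner compressions in the amenable case, and at the level of a universal division ring of fractions in the Atiyah case. The difficulty in general is that $D$, as an operator on $l^2(G)$, is unbounded as soon as $\varrho$ is unbounded and does not commute with the right $G$-action; it is not affiliated to $\mathcal{N}(G)$, so there is no inner conjugation inside $\mathcal{N}(G)$ (nor inside its affiliated algebra) implementing $\Phi_\varrho$. I therefore expect the genuinely hard point to be eliminating the amenability or Atiyah hypotheses: a full proof seems to require either the strong Atiyah conjecture for arbitrary $G$ (which is open, and must be weakened in the presence of torsion) or a new mechanism comparing $\dim_{\mathcal{N}(G)}$ of a module with that of its $\Phi_\varrho$-twist directly.
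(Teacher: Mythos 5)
This statement is a \emph{conjecture} in the paper: the authors do not prove it in general, and the only case they settle is Proposition~\ref{prop-Betti-independant}, for groups $G$ in Linnell's class ${\mathcal C}$ with $\lcm(G)<\infty$. Your proposal, which honestly stops short of a full proof, should therefore be compared with that partial result. Your opening reduction --- to the invariance of the von Neumann rank under the rescaling automorphism $\Phi_\varrho$ of $\C G$ --- is exactly the reduction the paper makes at the start of its proof of Proposition~\ref{prop-Betti-independant}, and your ``Atiyah'' case is close in spirit to the paper's argument: both extend $\Phi_\varrho$ to the division closure ${\mathcal D}(G)$ and check that the extension preserves the relevant rank. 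The differences are real, though. The paper allows torsion: it extends $\Phi_\varrho$ through the Ore localization of the crossed product ${\mathcal D}(\ker\varrho) * G/\ker(\varrho)$ and then uses the surjectivity of $\colim_{H \leq G \textrm{ finite}} K_0(\C H) \to K_0({\mathcal D}(G))$ to show that the induced map on $K_0({\mathcal D}(G))$ commutes with $\dim_{{\mathcal U}(G)}$; your skew-field argument replaces this $K_0$-step by the universal property of ${\mathcal D}(G)$ as Cohn's universal division ring of fractions, which restricts you to torsion-free $G$ and relies on results (strong Atiyah and universality of ${\mathcal D}(G)$ for locally indicable groups) well beyond the toolkit of \cite{Luc02}, though the claims are correct and do cover knot groups. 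Your amenable case is genuinely complementary to the paper: F{\o}lner approximation of the von Neumann rank (Elek's theorem, not in \cite{Luc02}) plus the observation that $\Phi_\varrho(A)|_F$ differs from $A|_F$ by invertible diagonal row and column scalings handles all amenable groups, including those with unbounded torsion, which Proposition~\ref{prop-Betti-independant} does not reach. Finally, your diagnosis of the obstruction --- the diagonal operator implementing $\Phi_\varrho$ is unbounded and not affiliated to ${\mathcal N}(G)$, so no inner conjugation is available --- is precisely why the general statement remains a conjecture; no step you assert is wrong, but be explicit that what you have is two partial results, not a proof.
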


We denote by ${\mathcal C}$ the smallest class of groups which contains all free groups and is closed under directed unions and extensions with elementary amenable quotients.
This class of groups were introduced by Peter Linnell who proved the Atiyah Conjecture for all groups $G$ in this class ${\mathcal C}$ which satisfy $\lcm(G) < \infty$ (see \cite[Theorem 1.5]{Lin93}).

\begin{conjecture}[Atiyah Conjecture]
For a group $G$ we define $\lcm(G)$ as the least common multiple of the orders of the finite subgroups of $G$. If there is no bound on the orders of the finite subgroups of $G$ then we set $\lcm(G) := \infty$.\\
A group $G$ with $\lcm(G) < \infty$ satisfies the Atiyah Conjecture, if for any matrix $A \in M(m \times n; \C G)$ the von Neumann dimension of the kernel of the induced bounded $G$-operator
\[
r^{(2)}_A \colon l^2(G)^m \to l^2(G)^n, \, x \mapsto xA
\]
satisfies
\[
\lcm(G) \cdot \dim_{{\mathcal N}(G)} \big( \ker( r^{(2)}_A ) \big) \in \Z.
\]
\end{conjecture}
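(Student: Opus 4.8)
The plan is to prove the integrality statement for the class of groups for which it is currently accessible, namely Linnell's class $\mathcal{C}$ described just above; in full generality the assertion is a deep open problem, and the strategy I would follow is exactly the one behind \cite[Theorem 1.5]{Lin93}. The organizing idea is to replace the analytic von Neumann dimension by an \emph{algebraic} dimension over a ring of fractions of $\C G$. First I would pass from $\mathcal{N}(G)$ to the larger ring $\mathcal{U}(G)$ of (unbounded) operators affiliated to $\mathcal{N}(G)$, into which $\C G$ embeds; $\mathcal{U}(G)$ is von Neumann regular, and $\dim_{\mathcal{N}(G)}$ extends to finitely generated $\mathcal{U}(G)$-modules. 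Inside $\mathcal{U}(G)$ I would form the division closure $\mathcal{D}(G)$ of $\C G$, the smallest subring containing $\C G$ and closed under inverses of its units. The point is that $\ker(r^{(2)}_A)$, viewed as an $\mathcal{N}(G)$-submodule of $l^2(G)^m$, has the same dimension as the kernel of $A$ acting on $\mathcal{U}(G)^m$, and this in turn is obtained by base change $\mathcal{U}(G) \otimes_{\mathcal{D}(G)} (-)$ from $\ker(A \colon \mathcal{D}(G)^m \to \mathcal{D}(G)^n)$.

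The key reduction is the following equivalence: the Atiyah Conjecture for $G$ with $\lcm(G) < \infty$ holds if and only if $\mathcal{D}(G)$ is semisimple Artinian and the composite $K_0(\mathcal{D}(G)) \to \R$ induced by $\dim_{\mathcal{N}(G)}$ has image contained in $\frac{1}{\lcm(G)} \Z$. Granting this ring-theoretic property, $\ker(A \colon \mathcal{D}(G)^m \to \mathcal{D}(G)^n)$ is a finitely generated projective $\mathcal{D}(G)$-module, so its class lies in $K_0(\mathcal{D}(G))$ and its $\mathcal{U}(G)$-dimension therefore lies in $\frac{1}{\lcm(G)} \Z$; by the identification above this is exactly $\lcm(G) \cdot \dim_{\mathcal{N}(G)}(\ker(r^{(2)}_A)) \in \Z$. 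Thus the entire burden is shifted onto proving the structural statement about $\mathcal{D}(G)$.

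I would establish that structural statement by transfinite induction matching the three closure operations that generate $\mathcal{C}$. \textbf{Base case:} for a free group $F$ one shows that $\mathcal{D}(F)$ is a skew field, using that $\C F$ is a free ideal ring and hence admits a universal field of fractions which one identifies with $\mathcal{D}(F)$. \textbf{Directed unions:} if $G = \colim_\alpha G_\alpha$ with each $G_\alpha$ satisfying the conjecture, continuity of the von Neumann dimension together with the fact that the finite subgroups of $G$ already sit inside some $G_\alpha$ propagates both semisimplicity and the bound by $\lcm(G)$. \textbf{Extensions:} given $1 \to H \to G \to Q \to 1$ with $H \in \mathcal{C}$ satisfying the conjecture and $Q$ elementary amenable, I would realise $\mathcal{D}(G)$ as a crossed product of $\mathcal{D}(H)$ by $Q$ and control its Wedderburn type through its $K_0$, reducing via the hierarchical structure of elementary amenable groups (built from finite and abelian groups by extensions and ascending unions) to the finite and abelian cases, where semisimplicity and the divisibility condition can be verified by hand.

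\textbf{Main obstacle.} The hard step is unquestionably the extension case: tracking the division closure and its semisimple structure across a crossed product by an elementary amenable quotient demands a delicate $K_0$-computation and the full strength of Moody's induction theorem to transport finitely generated projective modules and their ranks through the extension, and it is precisely here that torsion in $G$ produces the denominator $\lcm(G)$. I should stress the scope: this argument is confined to $\mathcal{C}$. For an arbitrary group with $\lcm(G) < \infty$ no method is known, and strengthened versions allowing unbounded torsion are known to fail, so any honest proposal must present the final statement as a conjecture verified on $\mathcal{C}$ rather than as a theorem in full generality.
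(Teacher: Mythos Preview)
The statement in question is labeled a \emph{Conjecture} in the paper and is not proved there; the paper only records, in the paragraph immediately following, Linnell's strategy for the class $\mathcal{C}$: pass to the division closure $\mathcal{D}(G)\subset\mathcal{U}(G)$, establish that $\mathcal{D}(G)$ is semisimple, and show that the map $\colim_{H\le G\ \text{finite}}K_0(\C H)\to K_0(\mathcal{D}(G))$ is surjective. Your proposal expands exactly this strategy (your condition that $\dim_{\mathcal{N}(G)}$ sends $K_0(\mathcal{D}(G))$ into $\tfrac{1}{\lcm(G)}\Z$ is the immediate consequence of the paper's surjectivity condition), and you correctly flag that the general assertion remains open; so your outline is accurate and agrees with the sketch the paper gives.
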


The strategy in Linnell's proof is to show that for any such group $G$ the division closure ${\mathcal D}(G)$ of the complex group ring $\C G$ in the algebra ${\mathcal U}(G)$ of operators affiliated to the group von Neumann algebra ${\mathcal N}(G)$ has the following two properties provided that $\lcm(G) < \infty$ holds:
\begin{itemize}
 \item The ring ${\mathcal D}(G)$ is semisimple.
 \item The composition
 \[
 \colim_{H \leq G \; \textrm{finite}} K_0(\C H) \to K_0(\C G) \to K_0({\mathcal D}(G))
 \]
 is surjective.
\end{itemize}
He shows that these two properties imply the Atiyah Conjecture. For details we refer to section ``10.2 A Strategy for the Proof of the Atiyah Conjecture'' in L\"{u}ck's book on $L^2$-invariants \cite{Luc02}.

\begin{proposition} \label{prop-Betti-independant}
Let $X$ be a finite free $G$-CW-complex and $\varrho \colon G \to \R^{>0}$ a group homomorphism.
Suppose that $G$ lies in the class ${\mathcal C}$ and has the property $\lcm(G) < \infty$.
Then
\[
b^{(2)}_{\varrho,n}(X) = b^{(2)}_n(X) \quad \textrm{for all} \; n.
\]
\end{proposition}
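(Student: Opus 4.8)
The plan is to translate the assertion, via the functor $\Psi_\varrho$, into a statement about matrices over $\C G$, and then to combine Linnell's structure theory for the division closure $\mathcal D(G)$ of $\C G$ in $\mathcal U(G)$ with the fact that $\varrho$ is trivial on torsion.

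First I would fix a cellular $\Z G$-basis of $C^{cell}_*(X)$, so that the boundary maps become right multiplication by matrices $A_*$ over $\Z G$ and $C^{(2)}_{\varrho,*}(X)$ is the complex $(l^2(G,\varrho)^{m_*}, r_{A_*})$. Composing $\Psi_\varrho$ with the isometry $l^2(G)\to\Psi_\varrho(l^2(G,\varrho))$ from the appendix yields an isometric isomorphism of Hilbert $\mathcal N(G)$-chain complexes $(l^2(G)^{m_*}, r_{\Phi_\varrho(A_*)})\xrightarrow{\cong}\Psi_\varrho\big(l^2(G,\varrho)^{m_*}, r_{A_*}\big)$, where $\Phi_\varrho\colon\C G\to\C G$, $\sum c_g g\mapsto\sum c_g\sqrt{\varrho(g)}\,g$, is a $\C$-algebra automorphism (multiplicative since $\sqrt\varrho$ is a homomorphism). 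Hence
\[
b^{(2)}_{\varrho,n}(X)=b^{(2)}_n\big(l^2(G)^{m_*}, r_{\Phi_\varrho(A_*)}\big)\quad\text{and}\quad b^{(2)}_n(X)=b^{(2)}_n\big(l^2(G)^{m_*}, r_{A_*}\big).
\]
Since $b^{(2)}_k$ of such a complex equals $\dim_{\mathcal N(G)}\ker(r^{(2)}_{B_k})+\dim_{\mathcal N(G)}\ker(r^{(2)}_{B_{k+1}})-m_{k+1}$ (using $\dim\overline{\im}=m-\dim\ker$), it suffices to prove
\[
\dim_{\mathcal N(G)}\ker\big(r^{(2)}_{\Phi_\varrho(A)}\big)=\dim_{\mathcal N(G)}\ker\big(r^{(2)}_{A}\big)\qquad\text{for every matrix $A$ over $\C G$.}
\]

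Now I would invoke Linnell's theorem (\cite{Lin93}, see also \cite{Luc02}): as $G\in\mathcal C$ and $\lcm(G)<\infty$, the ring $\mathcal D(G)$ is semisimple and $\colim_{H\leq G\text{ finite}}K_0(\C H)\to K_0(\mathcal D(G))$ is surjective; moreover, for a matrix $B$ over $\C G$ one has $\dim_{\mathcal N(G)}\ker(r^{(2)}_B)=\dim_{\mathcal D(G)}\ker(r_B\colon\mathcal D(G)^m\to\mathcal D(G)^n)\in\tfrac{1}{\lcm(G)}\Z_{\geq0}$. So the desired identity becomes $\dim_{\mathcal D(G)}\ker(r_{\Phi_\varrho(A)})=\dim_{\mathcal D(G)}\ker(r_A)$. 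The crucial claim is that $\Phi_\varrho$ extends to a ring automorphism $\widetilde\Phi_\varrho$ of $\mathcal D(G)$; granting this, $\widetilde\Phi_\varrho$ intertwines $r_A$ with $r_{\Phi_\varrho(A)}$ and therefore carries $\ker(r_A)$ isomorphically onto $\ker(r_{\Phi_\varrho(A)})$. To see that $\widetilde\Phi_\varrho$ preserves $\dim_{\mathcal D(G)}$ I would use that $\R^{>0}$ is torsion-free, so $\varrho|_H=1$, hence $\Phi_\varrho|_{\C H}=\id$, for every finite subgroup $H\leq G$. Thus $\widetilde\Phi_\varrho$ fixes each $\C H\subseteq\mathcal D(G)$ pointwise, so $(\widetilde\Phi_\varrho)_*$ is the identity on the image of $\colim_H K_0(\C H)\to K_0(\mathcal D(G))$, which by Linnell's surjectivity is all of $K_0(\mathcal D(G))$; hence $(\widetilde\Phi_\varrho)_*=\id$ on $K_0(\mathcal D(G))$. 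As $\dim_{\mathcal D(G)}$ of a finitely generated projective module depends only on its $K_0$-class, we conclude $\dim_{\mathcal D(G)}\ker(r_{\Phi_\varrho(A)})=\dim_{\mathcal D(G)}\ker(r_A)$, and summing over the complex gives $b^{(2)}_{\varrho,n}(X)=b^{(2)}_n(X)$.

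The hard part is the extension step. Since $\mathcal D(G)$ is the division (rational) closure of $\C G$ in $\mathcal U(G)$, the automorphism $\Phi_\varrho$ extends to $\mathcal D(G)$ as soon as $\Phi_\varrho$ maps the set of matrices over $\C G$ that become invertible in $\mathcal D(G)$ to itself — equivalently, as soon as weak-isomorphy of $r^{(2)}_B$ is unaffected by replacing $l^2(G)$ with $l^2(G,\varrho)$. This cannot be read off directly from the operator-algebraic picture: $\Phi_\varrho$ is genuinely unbounded and does not extend to $\mathcal N(G)$ (for $\varrho\neq1$ the norm $\|r^{(2)}_{\Phi_\varrho(g)}\|=\varrho(g)^{1/2}$ is unbounded as $g$ ranges over $G$). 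One must instead use the algebraic construction of $\mathcal D(G)$ — for instance Linnell's layerwise build-up of $\mathcal D(G)$ through the class $\mathcal C$, at each layer the monomial rescaling $g\mapsto\sqrt{\varrho(g)}\,g$ respecting all relations and propagating through the iterated Ore and division localizations — or else a semicontinuity argument for $x\mapsto\dim_{\mathcal N(G)}\ker(r^{(2)}_{\Phi_{\varrho^x}(A)})$ combined with the $\tfrac{1}{\lcm(G)}\Z$-valuedness supplied by the Atiyah conjecture; the latter route is delicate because Novikov--Shubin invariants of matrices over $\C G$ may be finite, so a priori one only knows that the vanishing locus of this function is open.
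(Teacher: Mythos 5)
Your overall strategy is the same as the paper's: reduce, via $\Psi_\varrho$, to the identity $\dim_{{\mathcal N}(G)}\ker(r^{(2)}_{\Phi_\varrho(A)})=\dim_{{\mathcal N}(G)}\ker(r^{(2)}_{A})$ for matrices $A$ over $\C G$, extend $\Phi_\varrho$ to a ring automorphism of ${\mathcal D}(G)$, and exploit the surjectivity of $\colim_{H\leq G\ \textrm{finite}}K_0(\C H)\to K_0({\mathcal D}(G))$ together with the fact that $\Phi_\varrho$ does not move the classes coming from finite subgroups. (The paper phrases this last step as $\alpha\circ(\Phi_\varrho)_*=\alpha$ for $\alpha\colon K_0({\mathcal D}(G))\to K_0({\mathcal U}(G))\to\R$, via a trace computation over $\C H$; your observation that $\varrho|_H=1$ because $\R^{>0}$ is torsion-free yields the same conclusion, and your kernel formulation of the dimension is equivalent to the paper's cokernel formulation.)

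The genuine gap is precisely the step you yourself flag as ``the hard part'': you never prove that $\Phi_\varrho$ extends to a ring automorphism of ${\mathcal D}(G)$, and the two routes you gesture at (propagating the rescaling through Linnell's full layerwise construction, or a semicontinuity argument in $x$) are left entirely speculative --- indeed you point out why the second one is problematic. The paper closes this gap with a single, much more economical decomposition that your sketch misses: the extension $1\to\ker(\varrho)\to G\to G/\ker(\varrho)\to 1$. Since ${\mathcal C}$ is closed under passing to subgroups, $\ker(\varrho)$ lies in ${\mathcal C}$ and ${\mathcal D}(\ker(\varrho))$ is semisimple; since $G/\ker(\varrho)$ embeds in $\R^{>0}$ it is torsion-free abelian, and \cite[Lemma 10.69]{Luc02} shows that the crossed product ${\mathcal D}(\ker(\varrho))*G/\ker(\varrho)\subset{\mathcal U}(G)$ satisfies the Ore condition with respect to its non-zero divisors and that the Ore localization is exactly ${\mathcal D}(G)$. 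Now $\Phi_\varrho$ is the identity on $\C\ker(\varrho)$, hence extends identically over ${\mathcal D}(\ker(\varrho))$ and merely rescales the $G/\ker(\varrho)$-homogeneous components of the crossed product; this is visibly a ring automorphism of ${\mathcal D}(\ker(\varrho))*G/\ker(\varrho)$, it preserves the set of non-zero divisors, and therefore extends to the Ore localization ${\mathcal D}(G)$. (The paper also first reduces to the case where $G$ is generated by the supports of the entries of $A$, hence finitely generated, using the induction properties.) Without this --- or some equivalent --- argument, your proof is incomplete at its central point, so you should supply it explicitly.
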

\begin{proof}
Because of the formula of the dimension function for short exact sequences (see property (\ref{dim_ses}) on page \pageref{dim_ses}) it suffices to show
\[
\dim_{{\mathcal N}(G,\varrho)}\big(\ker(r^{(2)}_A)\big) = \dim_{{\mathcal N}(G)}\big(\ker(r^{(2)}_A)\big)
\]
for any matrix $A \in M(m \times n; \C G)$.
Notice that
\[
\dim_{{\mathcal N}(G,\varrho)}\big(\ker(r^{(2)}_A)\big) = \dim_{{\mathcal N}(G)}\big(\ker(r^{(2)}_{\Phi_\varrho(A)})\big).
\]
Because of the induction properties we can assume that $G$ is generated by the support of the entries of the matrix $A$. This means that $G$ is finitely generated.
Since the class ${\mathcal C}$ is closed under taking subgroups, the kernel $\ker(\varrho) < G$ lies in ${\mathcal C}$. In particular, the ring ${\mathcal D}(\ker(\varrho))$ is semisimple. Consider the crossed product ring ${\mathcal D}(\ker(\varrho)) * G/\ker(\varrho) \subset {\mathcal U}(G)$ which is the smallest subring containing ${\mathcal D}(\ker(\varrho))$ and $\C G$. The isomorphism $\Phi_\varrho \colon \C G \to \C G$ extends to an isomorphism $\Phi_\varrho \colon {\mathcal D}(\ker(\varrho)) * G/\ker(\varrho) \to {\mathcal D}(\ker(\varrho)) * G/\ker(\varrho)$ which is the identity on ${\mathcal D}(\ker(\varrho))$. \cite[Lemma 10.69]{Luc02} tells us that the ring ${\mathcal D}(\ker(\varrho)) * G/\ker(\varrho)$ satisfies the Ore condition with respect to the multiplicative set of all non-zero divisors $\NZD({\mathcal D}(\ker(\varrho)) * G/\ker(\varrho))$. Moreover, the Ore localization agrees with ${\mathcal D}(G)$. We conclude that the isomorphism $\Phi_\varrho \colon {\mathcal D}(\ker(\varrho)) * G/\ker(\varrho) \to {\mathcal D}(\ker(\varrho)) * G/\ker(\varrho)$ extends to an isomorphism $\Phi_\varrho \colon {\mathcal D}(G) \to {\mathcal D}(G)$.
There is a dimension function $\dim_{{\mathcal U}(G)} \colon K_0({\mathcal U}(G)) \to \R$ (see \cite[section 8.3]{Luc02}). We conclude from \cite[Theorem 8.29 and Theorem 6.24 (4)]{Luc02} that
\[
\dim_{{\mathcal N}(G)}\big(\ker(r^{(2)}_A)\big) = m - n + \dim_{{\mathcal U}(G)}\big({\mathcal U}(G) \otimes_{\C G} \coker(r_A \colon {\C G}^m \to{\C G}^n )\big).
\]
Hence it suffices to show
\[
\dim_{{\mathcal U}(G)}\big({\mathcal U}(G) \otimes_{\C G} \coker(r_A)\big) = \dim_{{\mathcal U}(G)}\big({\mathcal U}(G) \otimes_{\C G} \coker(r_{\Phi_\varrho(A)})\big).
\]
Since ${\mathcal D}(G)$ is semisimple, every finitely generated ${\mathcal D}(G)$-module is projective.
Consider the group homomorphism
\[
\alpha \colon K_0({\mathcal D}(G)) \longrightarrow K_0({\mathcal U}(G)) \stackrel{\dim_{{\mathcal U}(G)}}{\longrightarrow} \R.
\]
Let $i \colon H \hookrightarrow G$ be the inclusion of a finite subgroup and let $P$ be a finitely generated projective $\C H$-module. Choose a matrix $F \in M(n \times n; \C H)$ with $F^2 = F$ and $\im(r_F) = P$. We calculate
\begin{align*}
\alpha \circ {\Phi_\varrho}_* \circ i_* (P) & = \tr_{\C H}(\Phi_\varrho(F)) = \sum_{i=1}^n \langle \Phi_\varrho(F_{ii}) , e \rangle = \\
& = \sum_{i=1}^n \langle F_{ii} , e \rangle = \tr_{\C H}(F) = \alpha \circ i_* (P).
\end{align*}
This shows $\alpha \circ {\Phi_\varrho}_* \circ i_* = \alpha \circ i_*$.
The surjectivity of the composition
\[
\colim_{H \leq G \; \textrm{finite}} K_0(\C H) \to K_0(\C G) \to K_0({\mathcal D}(G))
\]
yields $\alpha \circ {\Phi_\varrho}_* = \alpha$.
We finally obtain
\begin{align*}
& \dim_{{\mathcal U}(G)}\big({\mathcal U}(G) \otimes_{\C G} \coker(r_A)\big) = \alpha\big([{\mathcal D}(G) \otimes_{\C G} \coker(r_A)]\big) = \\
& = \alpha \circ {\Phi_\varrho}_* \big([{\mathcal D}(G) \otimes_{\C G} \coker(r_A)]\big) =
\alpha\big([{\mathcal D}(G) \otimes_{\C G} \coker(r_{\Phi_\varrho(A)})]\big) = \\
& = \dim_{{\mathcal U}(G)}\big({\mathcal U}(G) \otimes_{\C G} \coker(r_{\Phi_\varrho(A)})\big).
\end{align*}
\end{proof}

\subsection{Weighted Novikov--Shubin invariants}

\begin{definition} \label{def-spec_dens}
Let $f \colon U \to V$ be a map of finitely generated Hilbert ${\mathcal N}(G,\varrho)$-modules.
Denote by $\{E_\lambda^{f^* f} \colon U \to U \, | \, \lambda \in \R\}$ the family of spectral projections of the positive endomorphism $f^*f \colon U \to U$.
We define the spectral density function of $f$ by
\[
F(f)(\lambda) = \dim_{{\mathcal N}(G,\varrho)} \big( \im(E_{\lambda^2}^{f^* f}) \big).
\]
and the Novikov--Shubin invariant of $f$ by
\[
\alpha_{\varrho}(f) := \liminf_{\lambda \to 0+} \frac{\ln\big( F_n(f)(\lambda) - F_n(f)(0) \big)}{\ln(\lambda)} \in [0,\infty],
\]
provided that $F_n(f)(\lambda) > F_n(f)(0)$ for all $\lambda > 0$. Otherwise, we set $\alpha_{\varrho}(f) := \infty^+$.
(Here, $\infty^+$ is a new formal symbol.)
\end{definition}

Notice that $\alpha_{\varrho}(f) = \infty^+$ if $f$ is an isomorphism.

%Should we mention \cite[Lemma 2.11 (8),(9)]{Luc02}?

\begin{definition}
Let $(C_*,c_*)$ be a finite Hilbert ${\mathcal N}(G,\varrho)$-chain complex.
We define its Novikov--Shubin invariants by
\[
\alpha_{\varrho,n}(C_*) := \alpha_{\varrho}(c_n) \in [0,\infty] \cup \{\infty^+\}.
\]
\end{definition}

Notice that we have $\alpha_{\varrho}(f) = \alpha(\Psi_\varrho(f))$ and $\alpha_{\varrho,n}(C_*) = \alpha_n(\varrho(C_*))$.

\begin{proposition} \label{prop_NS-homotopy}
Let $f_* \colon C_* \to D_*$ be a homotopy equivalence of finite Hilbert ${\mathcal N}(G,\varrho)$-chain complexes. Then we have
\[
\alpha_{\varrho,n}(C_*) = \alpha_{\varrho,n}(D_*).
\]
\end{proposition}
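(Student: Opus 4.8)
The plan is to reduce at once to the untwisted case $\varrho = 1$ by means of the functor $\Psi_\varrho$, and then to quote the classical homotopy invariance of Novikov--Shubin invariants of finite Hilbert ${\mathcal N}(G)$-chain complexes.

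First I would observe that $\Psi_\varrho$ is a functor from finitely generated Hilbert ${\mathcal N}(G,\varrho)$-modules to finitely generated Hilbert ${\mathcal N}(G)$-modules which is the identity on morphism sets: a $\C G$-linear map $\phi$ stays $\C G$-linear for the rescaled actions $g(v) := \varrho(g)^{-1/2} \cdot g \cdot v$, since the scalars $\varrho(g)^{\pm 1/2}$ cancel. Hence $\Psi_\varrho$ carries $C_*$ and $D_*$ to finite Hilbert ${\mathcal N}(G)$-chain complexes $\Psi_\varrho(C_*)$, $\Psi_\varrho(D_*)$, and it carries the chain homotopy equivalence $f_*$ --- together with a homotopy inverse and the two chain homotopies witnessing it, all of which are $\C G$-linear bounded maps --- to a chain homotopy equivalence $\Psi_\varrho(f_*) \colon \Psi_\varrho(C_*) \to \Psi_\varrho(D_*)$ of finite Hilbert ${\mathcal N}(G)$-chain complexes.

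Next I would invoke the identity $\alpha_{\varrho,n}(C_*) = \alpha_n(\Psi_\varrho(C_*))$, and likewise for $D_*$, noted just before the statement; it holds because $\Psi_\varrho$ is compatible with adjoints, spectral projections and the von Neumann dimension, so the spectral density functions of $c_n$ and of $\Psi_\varrho(c_n)$ coincide. By the homotopy invariance of the Novikov--Shubin invariants for finite Hilbert ${\mathcal N}(G)$-chain complexes (see \cite[Theorem 2.19]{Luc02}; the analogue for CW-complexes is \cite[Theorem 2.55]{Luc02}) we get $\alpha_n(\Psi_\varrho(C_*)) = \alpha_n(\Psi_\varrho(D_*))$. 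Chaining these three equalities yields $\alpha_{\varrho,n}(C_*) = \alpha_{\varrho,n}(D_*)$, as desired.

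The one point that warrants care --- and the place I expect the main (small) obstacle --- is to make sure the cited homotopy invariance is formulated for the Novikov--Shubin invariants of a Hilbert chain complex defined through its differentials $c_n$ as in Definition~\ref{def-spec_dens}, rather than only through the combinatorial Laplacian or an associated manifold; this is precisely what the cited result provides, so no further argument is needed. A self-contained alternative would mimic the cone argument of Proposition~\ref{prop_ex-seq}(\ref{prop_ex-seq-2}): a chain contraction $\gamma$ of a contractible finite Hilbert ${\mathcal N}(G,\varrho)$-chain complex gives $\|x\| \le \|\gamma_{n-1}\| \cdot \|c_n x\|$ for $x \in (\ker c_n)^\perp$, so the positive spectrum of $c_n^* c_n$ stays bounded away from $0$ and all Novikov--Shubin invariants equal $\infty^+$; however, propagating this through $0 \to D_* \to \cone_*(f_*) \to C_{*-1} \to 0$ to control the individual $\alpha_{\varrho,n}$ is more delicate than in the $L^2$-Betti number case, so I would stick with the $\Psi_\varrho$-reduction.
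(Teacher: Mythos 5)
Your proposal is correct and follows exactly the paper's own argument: the paper's proof is the one-line chain $\alpha_{\varrho,n}(C_*) = \alpha_n(\Psi_\varrho(C_*)) = \alpha_n(\Psi_\varrho(D_*)) = \alpha_{\varrho,n}(D_*)$, using the functor $\Psi_\varrho$ and \cite[Theorem 2.19]{Luc02}. Your additional verifications (that $\Psi_\varrho$ preserves homotopy equivalences and spectral density functions) are correct fillings-in of details the paper leaves implicit.
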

\begin{proof}
Using \cite[Theorem 2.19]{Luc02} we obtain
\[
\alpha_{\varrho,n}(C_*) = \alpha_n(\Psi_\varrho(C_*)) = \alpha_n(\Psi_\varrho(D_*)) = \alpha_{\varrho,n}(D_*).
\]
\end{proof}

\begin{definition} \label{def-NS-Inv}
Let $X$ be a finite free $G$-CW-complex and $\varrho \colon G \to \R^{>0}$ a group homomorphism.
We define the weighted Novikov--Shubin invariants of $X$ by
\[
\alpha_{\varrho,n}(X) := \alpha_{\varrho,n}\big(C^{(2)}_{\varrho,*}(X)\big)
\]
where $C^{(2)}_{\varrho,*}(X) := l^2(G,\varrho) \otimes_{\Z G} C^{cell}_*(X)$ is the Hilbert ${\mathcal N}(G,\varrho)$-chain complex defined in Definition \ref{def-L2-Betti}.
\end{definition}
The definition of $C^{(2)}_{\varrho,*}(X)$ depends on the choice of a cellular basis for $C^{cell}_*(X)$. Using Proposition \ref{prop_NS-homotopy} we conclude as in the case of weighted $L^2$-Betti numbers that $\alpha_{\varrho,n}(X)$ is independent of the choice of the cellular basis.

\begin{proposition} \label{properties-NS}
\begin{enumerate}
\item Homotopy invariance \\
Let $f \colon X \to Y$  be a $G$-map of finite free $G$-CW-complexes and $\varrho \colon G \to \R^{>0}$ a group homomorphism. If the map induced on homology with complex coefficients $H_n(f;\C) \colon H_n(X;\C) \to H_n(Y;\C)$ is bijective for $n < d$ then
\[
\alpha_{\varrho,n}(X) = \alpha_{\varrho,n}(Y) \quad \textrm{for} \; n \leq d.
\]
In particular, if $f$ is a weak homotopy equivalence then
\[
\alpha_{\varrho,n}(X) = \alpha_{\varrho,n}(Y) \quad \textrm{for all} \; n.
\]
\item Poincar\'e duality \\
Let $M$ be a cocompact free proper $G$-manifold of dimension $m$ which is orientable. Then
\[
\alpha_{\varrho,n}(M) = \alpha_{\varrho,m+1-n}(M, \partial M).
\]
\item Connected sum \\
Let $M_1, M_2$ be compact connected $m$-dimensional manifolds with $m \geq 3$ and $M = M_1 \# M_2$ be their connected sum.
Let $\varrho \colon \pi_1(M) \cong \pi_1(M_1) \ast \pi_1(M_2) \to \R^{>0}$ be a group homomorphism.
Then
\[
\alpha_{\varrho,n}(\widetilde{M}) = \min\{ \alpha_{\varrho|_{\pi_1(M_1),n}}(\widetilde{M_1}), \alpha_{\varrho|_{\pi_1(M_2),n}}(\widetilde{M_2}) \} \quad \textrm{for} \; n \geq 2.
\]
\item First Novikov--Shubin invariant \\
Let $X$ be a connected finite free $G$-CW-complex.
Then $G$ is finitely generated and following statements hold.
\begin{enumerate}
\item $\alpha_{\varrho,1}(X)$ is finite if and only if $\varrho = 1$ and $G$ is infinite virtually nilpotent. In this case $\alpha_{\varrho,1}(X) = \alpha_1(X)$ is the growth rate of $G$.
\item $\alpha_{\varrho,1}(X) = \infty$ if and only if $\varrho = 1$ and $G$ is finite or non-amenable.
\item $\alpha_{\varrho,1}(X) = \infty^+$ if and only if $\varrho \neq 1$ or $G$ is amenable and not virtually nilpotent.
\end{enumerate}
\item Restriction \\
Let $X$ be a finite free $G$-CW-complex and let $H < G$ be a subgroup of finite index.
Let $\res_H(X)$ be the finite free $H$-CW-complex obtained from $X$ by restricting the $G$-action to an $H$-action.
We have
\[
\alpha_{\varrho|_H,n}(\res_H(X)) = \alpha_{\varrho,n}(X).
\]
\item Induction \\
Let $X$ be a finite free $H$-CW-complex and let $H < G$ be a subgroup.
Then $G \times_H X$ is a finite free $G$-CW-complex and
\[
\alpha_{\varrho,n}(G \times_H X) = \alpha_{\varrho|_H,n}(X).
\]
\item $S^1$-actions \\
Let $X$ be a connected $S^1$-CW-complex of finite type, for instance a connected compact manifold with smooth $S^1$-action. Suppose that for one orbit $S^1/H$ (and hence for all orbits) the inclusion into $X$ induces a map on $\pi_1$ with infinite image. (In particular the $S^1$-action has no fixed points.) Let $\widetilde X$ be the universal covering of $X$ with the canonical $\pi_1(X)$-action. Then we have $\alpha_{\varrho,n}(\widetilde{X}) \geq 1$.
\end{enumerate}
\end{proposition}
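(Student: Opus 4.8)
The plan is to push the whole proposition through the category equivalence $\Psi_\varrho$ and reduce to the untwisted case treated in \cite[Theorem 2.55]{Luc02}, and then to isolate the single place where the weight $\varrho$ genuinely changes the answer, namely item (4). Recall that $\Psi_\varrho$ is the identity on underlying Hilbert spaces and on morphisms, is compatible with taking adjoints (via the isometry $l^2(G)\to\Psi_\varrho(l^2(G,\varrho))$), and satisfies $\alpha_\varrho(f)=\alpha(\Psi_\varrho(f))$ and $\alpha_{\varrho,n}(C_*)=\alpha_n(\Psi_\varrho(C_*))$; via the ring isomorphism $\Phi_\varrho$ the differentials of the finite Hilbert ${\mathcal N}(G)$-chain complex $\Psi_\varrho(C^{(2)}_{\varrho,*}(X))$ are the right multiplications $r^{(2)}_{\Phi_\varrho(\partial_*)}$. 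Hence, item (4) aside, every assertion translates into the corresponding assertion of \cite[Theorem 2.55]{Luc02}, and proving it amounts to ``apply $\Psi_\varrho$ and quote \cite{Luc02}'', with a few remarks of the same flavour as in the proof of Proposition \ref{properties-Betti}.

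In more detail: homotopy invariance follows because a $G$-map $f\colon X\to Y$ induces a chain map of Hilbert ${\mathcal N}(G,\varrho)$-chain complexes, hence of Hilbert ${\mathcal N}(G)$-chain complexes after $\Psi_\varrho$, and the hypothesis on $H_*(f;\C)$ supplies, as in \cite{Luc02}, a chain map which is a homotopy equivalence through the relevant degrees; Propositions \ref{prop_NS-homotopy} and \ref{prop_ex-seq} then close it. For Poincar\'e duality one uses the cellular $L^2$-cochain complex $C_{(2)}^{\varrho,*}(X):=\hom_{\Z G}(C^{cell}_*(X),l^2(G,\varrho))$; the Laplace-operator argument already invoked for Proposition \ref{properties-Betti} identifies its cohomology isometrically with the homology of $C^{(2)}_{\varrho,*}(X)$ for a fixed cellular $\Z G$-basis, and since $\Psi_\varrho$ respects adjoints the index-shifted duality for Novikov--Shubin invariants passes through. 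Restriction and induction are read off from the $\C H$-linear isometries $\bigoplus_{g\in T}l^2(H,\varrho|_H)\cong\res_H(l^2(G,\varrho))$ and $\ind_G(l^2(H,\varrho|_H))=l^2(G,\varrho)$ recorded earlier together with \cite[Theorem 2.55]{Luc02}. The connected-sum formula and the $S^1$-action estimate follow from the same Mayer--Vietoris, respectively $S^1$-principal-bundle, arguments as in \cite{Luc02} after applying $\Psi_\varrho$; for the latter one additionally needs the weighted analogue of Example \ref{ex-S1-Betti}, that $\alpha_\varrho$ of multiplication by $t-1$ on $l^2(\langle t\rangle,\varrho)$ is $1$ if $\varrho=1$ and $\infty^+$ if $\varrho\ne1$ (then $\Phi_\varrho$ makes it the invertible operator $r^{(2)}_{\sqrt{\varrho(t)}\,t-1}$), so the bound ``$\ge1$'' holds in both cases.

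The substance of the proposition is item (4). That $G$ is finitely generated is clear, since $G\setminus X$ is a finite connected complex whose fundamental group surjects onto $G$; and for $\varrho=1$ the three sub-cases are exactly \cite[Theorem 2.55]{Luc02}. The new statement is the implication $\varrho\ne1\Rightarrow\alpha_{\varrho,1}(X)=\infty^+$, and I would prove it as follows. Using the homotopy invariance of item (1) we may, by collapsing the preimage of a maximal tree of $G\setminus X^{(1)}$ (a $G$-homotopy equivalence), assume $X$ has a single $G$-$0$-cell; then $C^{(2)}_{\varrho,0}(X)=l^2(G,\varrho)$ and the first differential is $r^{(2)}_A$ with $A=(g_1-1,\dots,g_k-1)^{\mathrm t}$, where $g_1,\dots,g_k$ generate $G$. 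Since $\varrho\ne1$, some $g:=g_{i_0}$ has $\varrho(g)\ne1$; then $\varrho$ is injective on $\langle g\rangle$, which therefore embeds into the torsion-free group $\R^{>0}$, so $g$ has infinite order. Consequently the spectrum of right multiplication by $g$ on $l^2(G)$ is the unit circle, so $\sqrt{\varrho(g)}\,r_g-\id$, equivalently $r^{(2)}_{\sqrt{\varrho(g)}\,g-1}$, is invertible on $l^2(G)$. Applying $\Psi_\varrho$ to the first differential gives $r^{(2)}_{\Phi_\varrho(A)}\colon l^2(G)^k\to l^2(G)$, which is already surjective (restrict to the $i_0$-th coordinate) and hence has closed image; therefore its spectral density function is constant near $0$, i.e.\ $\alpha_{\varrho,1}(X)=\infty^+$.

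I expect item (4) to be the main obstacle. The two delicate points there are: the reduction to a one-vertex model has to be arranged as a genuine $G$-homotopy equivalence, so that item (1) applies and so that the first differential really has the form $r^{(2)}_A$ with the $g_i$ generating $G$; and one must match the trichotomy of item (4) with the precise statement of \cite[Theorem 2.55]{Luc02} on the first Novikov--Shubin invariant of a group and splice in the new ``$\varrho\ne1$'' case. The spectral computation for $\sqrt{\varrho(g)}\,r_g-\id$ is the one genuinely new analytic input, but it is elementary; everything else is bookkeeping on top of \cite{Luc02} carried out through $\Psi_\varrho$.
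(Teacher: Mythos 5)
Your proposal is correct and follows essentially the same route as the paper: every item except (4) is reduced to L\"uck's Theorem 2.55 via the functor $\Psi_\varrho$, and for $\varrho\neq 1$ in item (4) both arguments hinge on the invertibility of $r^{(2)}_{s-1}$ on $l^2(G,\varrho)$ for a generator $s$ with $\varrho(s)\neq 1$, which forces the first differential to have closed image and hence Novikov--Shubin invariant $\infty^+$. The only (immaterial) difference is that you deduce this invertibility from the spectrum of the unitary $r_s$ lying in the unit circle, whereas the paper exhibits the inverse explicitly as the convergent Neumann series $r^{(2)}_{\sum_{k\geq 0} s^k}$ (after replacing $s$ by $s^{-1}$ if necessary).
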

\begin{proof}
Except for the assertion ``First Novikov--Shubin invariant'' the proof is essentially the same as in the case $\varrho = 1$. For this case we refer to \cite[Theorem 2.55 and Theorem 2.61]{Luc02}.\\
To the assertion ``First Novikov--Shubin invariant'': We know from \cite[Theorem 2.55 (5)]{Luc02} that $G$ is finitely generated and that the assertion holds in the case $\varrho = 1$. Now consider the case $\varrho \neq 1$. We have to show $\alpha_{\varrho,1}(X) = \infty^+$. Following the proof of \cite[Lemma 2.45]{Luc02} we conclude $\alpha_{\varrho,1}(X) = \alpha_{\varrho}(c_S)$. Here $S$ is any finite set of generators of $G$ and $c_S$ is defined by
\[
c_S \colon \xymatrix@1@+3pc{\bigoplus_{s \in S} l^2(G,\varrho) \ar[r]^-{\bigoplus_{s \in S} r^{(2)}_{s-1}} & l^2(G,\varrho)}
\]
where $r^{(2)}_{s-1}$ is right multiplication with $(s-1)$. We will show that $c_S|_{\ker(c_S)^\perp} \colon \ker(c_S) \to l^2(G,\varrho)$ is invertible. This would imply
\[
\alpha_{\varrho,1}(X) = \alpha_{\varrho}(c_S) = \alpha_{\varrho}(c_S|_{\ker(c_S)^\perp}) = \infty^+.
\]
Since $\varrho \neq 1$, there exist $s' \in S$ with $\varrho(s') \neq 1$. Notice that $r^{(2)}_{s'-1} \colon l^2(G,\varrho) \to l^2(G,\varrho)$ is invertible. If $\varrho(s') < 1$ then the inverse is given by right multiplication with $\sum_{k=0}^\infty {s'}^k$. This is a bounded operator because
\[
\big\| r^{(2)}_{\sum_{k=0}^\infty {s'}^k} \big\| \leq \sum_{k=0}^\infty \| r^{(2)}_{s'} \|^k = \sum_{k=0}^\infty \sqrt{\varrho(s')}^k = \frac{1}{1-\sqrt{\varrho(s')}}.
\]
If $\varrho(s') > 1$ then ${r^{(2)}_{s'-1}}^{-1} = r^{(2)}_{-{s'}^{-1}} \circ {r^{(2)}_{{s'}^{-1}-1}}^{-1}$.
We define the map
\[
\alpha \colon \xymatrix@1@+1pc{l^2(G,\varrho) \ar[r]^-{{r^{(2)}_{s'-1}}^{-1}}& l^2(G,\varrho) \ar[r]^-{i_{s'}} & \bigoplus_{s \in S} l^2(G,\varrho) \ar[r]^-{\pr}& \ker(c_S)^\perp}.
\]
We have $c_S|_{\ker(c_S)^\perp} \circ \alpha = \id$. Since $c_S|_{\ker(c_S)^\perp}$ is injective, we conclude $\alpha \circ c_S|_{\ker(c_S)^\perp} = \id$. This shows that $c_S|_{\ker(c_S)^\perp}$ is invertible. Hence $\alpha_{\varrho,1}(X) = \alpha_{\varrho}(c_S|_{\ker(c_S)^\perp}) = \infty^+$.
\end{proof}

\subsection{Weighted $L^2$-torsion}

Let $f \colon U \to V$ be a map of finitely generated Hilbert ${\mathcal N}(G,\varrho)$-modules.
Its spectral density function $F(f)(\lambda)$ (see Definition \ref{def-spec_dens}) is monotonous and right-continuous. It defines a measure on the Borel $\sigma$-algebra on $\R$ which is uniquely determined by $dF(f)((a,b]) := F(f)(b)-F(f)(a)$ for $a<b$.

\begin{definition}
Let $f \colon U \to V$ be a map of finitely generated Hilbert ${\mathcal N}(G,\varrho)$-modules.
We define the Fuglede--Kadison determinant of $f$ by
\[
{\det}_{{\mathcal N}(G,\varrho)}(f) := \exp \big( \int_{0^+}^\infty \ln(\lambda) \, dF(f)(\lambda) \big)
\]
if $\int_{0^+}^\infty \ln(\lambda) \, dF(f)(\lambda) > -\infty$ and by $\det_{{\mathcal N}(G,\varrho)}(f) := 0$ otherwise.
\end{definition}

If $G$ is the trivial group and $f \colon \C^n \to \C^n$ is an isomorphism then we have $\det_{{\mathcal N}(G,\varrho)}(f) = |\det_\C(f)|$.

The following proposition which is the analogue to \cite[Theorem 3.14]{Luc02} states the main properties of the Fuglede--Kadison determinant.
\begin{proposition} \label{properties-det}
Let $\varrho \colon G \to \R^{>0}$ be a group homomorphism.
\begin{enumerate}
\item Let $f \colon U \to V$ and $g \colon V \to W$ be morphisms of finitely generated Hilbert ${\mathcal N}(G,\varrho)$-modules such that $f$ has dense image and $g$ is injective. Then
\[
{\det}_{{\mathcal N}(G,\varrho)}(g \circ f) = {\det}_{{\mathcal N}(G,\varrho)}(f) \cdot {\det}_{{\mathcal N}(G,\varrho)}(g).
\] \label{properties-det-1}
\item Let $f_1 \colon U_1 \to V_1$, $f_2 \colon U_2 \to V_2$ and $f_3 \colon U_2 \to V_1$ be morphisms of finitely generated Hilbert ${\mathcal N}(G,\varrho)$-modules such that $f_1$ has dense image and $f_2$
is injective. Then
\[
{\det}_{{\mathcal N}(G,\varrho)}(\begin{pmatrix} f_1 & f_3 \cr 0 & f_2 \end{pmatrix}) = {\det}_{{\mathcal N}(G,\varrho)}(f_1) \cdot {\det}_{{\mathcal N}(G,\varrho)}(f_2).
\] \label{properties-det-2}
\item Let $f \colon U \to V$ be a morphism of finitely generated Hilbert ${\mathcal N}(G,\varrho)$-modules. Then
\[
{\det}_{{\mathcal N}(G,\varrho)}(f) = {\det}_{{\mathcal N}(G,\varrho)}(f^*).
\] \label{properties-det-3}
\item If the Novikov--Shubin invariant of the morphism $f \colon U \to V$ of finitely generated Hilbert ${\mathcal N}(G,\varrho)$-modules satisfies $\alpha_\varrho(f) > 0$ then
\[
{\det}_{{\mathcal N}(G,\varrho)}(f) > 0.
\] \label{properties-det-4}
\item Let $f \colon U \to V$ be a morphism of finitely generated Hilbert ${\mathcal N}(G,\varrho)$-modules and let $H < G$ be a subgroup of finite index. Then
\[
{\det}_{{\mathcal N}(H,\varrho|_H)}(\res_H(f)) = {\det}_{{\mathcal N}(G,\varrho)}(f)^{[G:H]}.
\] \label{properties-det-5}
\item Let $\varrho \colon G \to \R^{>0}$ be a group homomorphism and $H < G$ a subgroup. Let $f \colon U \to V$ be a morphism of finitely generated Hilbert ${\mathcal N}(H,\varrho|_H)$-modules. Then
\[
{\det}_{{\mathcal N}(G,\varrho)}(\ind_G(f)) = {\det}_{{\mathcal N}(H,\varrho|_H)}(f).
\] \label{properties-det-6}
\end{enumerate}
\end{proposition}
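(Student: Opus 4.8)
The plan is to reduce each of the six assertions to the corresponding statement for the trivial weight $\varrho = 1$, that is, to \cite[Theorem 3.14]{Luc02}, by means of the functor $\Psi_\varrho$ from the category of finitely generated Hilbert ${\mathcal N}(G,\varrho)$-modules to the category of finitely generated Hilbert ${\mathcal N}(G)$-modules introduced above. The key observation to establish first is that $\Psi_\varrho$ changes neither the underlying Hilbert spaces nor the underlying bounded operators; it merely rescales the $G$-action by the positive scalars $\varrho(g)^{-1/2}$. Consequently, for a morphism $f \colon U \to V$ of finitely generated Hilbert ${\mathcal N}(G,\varrho)$-modules the adjoint $f^*$, the positive operator $f^*f$, and its spectral projections $E_{\lambda^2}^{f^*f}$ coincide with those of $\Psi_\varrho(f)$. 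Since $\Psi_\varrho$ preserves the von Neumann dimension, it follows that $F(f)(\lambda) = F(\Psi_\varrho(f))(\lambda)$ for all $\lambda$, hence $dF(f) = dF(\Psi_\varrho(f))$ as Borel measures on $\R$, and therefore
\[
{\det}_{{\mathcal N}(G,\varrho)}(f) = {\det}_{{\mathcal N}(G)}(\Psi_\varrho(f)),
\]
both sides vanishing precisely when the defining integral diverges to $-\infty$. I would also record at this point that $\Psi_\varrho$ is a functor which preserves injectivity and density of images and which is compatible with direct sums.

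Granting this identity, assertions (\ref{properties-det-1}), (\ref{properties-det-2}) and (\ref{properties-det-3}) are immediate: one rewrites each left-hand side in terms of ${\mathcal N}(G)$-determinants of $\Psi_\varrho(f)$, $\Psi_\varrho(g)$ (and of the block operator of (\ref{properties-det-2})), using in addition $\Psi_\varrho(f^*) = (\Psi_\varrho f)^*$ for (\ref{properties-det-3}) — which holds because the adjoint only depends on the unchanged inner products — then invokes the relevant clause of \cite[Theorem 3.14]{Luc02}, whose hypotheses of dense image and injectivity are inherited under $\Psi_\varrho$, and finally translates back; the degenerate cases in which a determinant vanishes come for free since they are already covered in \cite[Theorem 3.14]{Luc02}. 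For (\ref{properties-det-4}) I would combine the displayed identity with the equality $\alpha_\varrho(f) = \alpha(\Psi_\varrho(f))$ noted after Definition \ref{def-spec_dens} and the positivity clause of \cite[Theorem 3.14]{Luc02}.

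For (\ref{properties-det-5}) and (\ref{properties-det-6}) I would additionally use the behaviour of $\Psi_\varrho$ under restriction and induction established above. In the restriction case, the identity $\res_H \circ \Psi_\varrho = \Psi_{\varrho|_H} \circ \res_H$ gives
\[
{\det}_{{\mathcal N}(H,\varrho|_H)}(\res_H f) = {\det}_{{\mathcal N}(H)}(\res_H \Psi_\varrho(f)) = {\det}_{{\mathcal N}(G)}(\Psi_\varrho(f))^{[G:H]} = {\det}_{{\mathcal N}(G,\varrho)}(f)^{[G:H]},
\]
the middle step being \cite[Theorem 3.14]{Luc02}. In the induction case, the $\C G$-linear isometric isomorphism $\ind_G(\Psi_{\varrho|_H}(V)) \cong \Psi_\varrho(\ind_G(V))$ — under which $\ind_G(f)$ corresponds to $\ind_G(\Psi_{\varrho|_H}(f))$ — together with the invariance of the Fuglede--Kadison determinant under composition with isometric isomorphisms gives
\[
{\det}_{{\mathcal N}(G,\varrho)}(\ind_G f) = {\det}_{{\mathcal N}(G)}(\ind_G \Psi_{\varrho|_H}(f)) = {\det}_{{\mathcal N}(H)}(\Psi_{\varrho|_H}(f)) = {\det}_{{\mathcal N}(H,\varrho|_H)}(f),
\]
again by \cite[Theorem 3.14]{Luc02}.

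The only step that is more than bookkeeping is the verification of the displayed identity ${\det}_{{\mathcal N}(G,\varrho)}(f) = {\det}_{{\mathcal N}(G)}(\Psi_\varrho(f))$, i.e.\ that $\Psi_\varrho$ preserves spectral density functions; but this reduces to the two facts already at our disposal, namely that $\Psi_\varrho$ alters neither the Hilbert spaces nor the operators, and that it preserves the von Neumann dimension. Once that is in place, I do not expect any genuine obstacle: each of the six parts is a one-line transcription of the corresponding clause of \cite[Theorem 3.14]{Luc02}.
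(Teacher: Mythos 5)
Your proposal is correct and follows essentially the same route as the paper: items (1)--(5) are reduced to \cite[Theorem 3.14]{Luc02} via the identity ${\det}_{{\mathcal N}(G,\varrho)}(f) = {\det}_{{\mathcal N}(G)}(\Psi_\varrho(f))$, which holds because $\Psi_\varrho$ changes neither the Hilbert spaces nor the operators (hence not $f^*f$ or its spectral projections) and preserves the von Neumann dimension. The only divergence is in (6), where the paper argues directly from $\ind_G(E_{\lambda^2}^{f^*f}) = E_{\lambda^2}^{\ind_G(f)^*\ind_G(f)}$ together with $\dim_{{\mathcal N}(G,\varrho)}(\ind_G V) = \dim_{{\mathcal N}(H,\varrho|_H)}(V)$, whereas you route through the isometry $\ind_G(\Psi_{\varrho|_H}(V)) \cong \Psi_\varrho(\ind_G(V))$ and the unweighted induction clause of \cite[Theorem 3.14]{Luc02}; both arguments are valid and of comparable length.
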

\begin{proof}
The first five statements are direct consequences of \cite[Theorem 3.14]{Luc02} because of the equation $\det_{{\mathcal N}(G,\varrho)}(f) = \det_{{\mathcal N}(G)}(\Psi_\varrho(f))$.
The statement (\ref{properties-det-6}) follows from the equation $\ind_G(E_{\lambda^2}^{f^* f}) = E_{\lambda^2}^{\ind_G(f)^* \ind_G(f)}$.
\end{proof}

\begin{definition}
Let $(C_*,c_*)$ be a finite Hilbert ${\mathcal N}(G,\varrho)$-chain complex.
Suppose that $b^{(2)}_{\varrho,n}(C_*) = 0$ and $\det_{{\mathcal N}(G,\varrho)}(c_n) > 0$ for all $n$.
We define its $L^2$-torsion by
\[
\rho^{(2)}_\varrho(C_*) := - \sum_{n \in \Z} (-1)^n \cdot \ln {\det}_{{\mathcal N}(G,\varrho)}(c_n) \in \R.
\]
\end{definition}

Notice that $\det_{{\mathcal N}(G,\varrho)}(f) = \det_{{\mathcal N}(G)}(\Psi_\varrho(f))$ and $\rho^{(2)}_\varrho(C_*) = \rho^{(2)}(\Psi_\varrho(C_*))$.

The following proposition is a direct consequence of \cite[Theorem 3.35 (1),(5), Lemma 3.41 and Lemma 3.44]{Luc02}.
\begin{proposition} \label{prop_torsion}
\begin{enumerate}
\item Let $0 \to C_* \stackrel{i_*}{\longrightarrow} D_* \stackrel{p_*}{\longrightarrow} E_* \to 0$ be an exact sequence of finite Hilbert ${\mathcal N}(G,\varrho)$-chain complexes.
Suppose that two of the Hilbert ${\mathcal N}(G,\varrho)$-chain complexes have the properties that the $L^2$-Betti numbers vanish and that the determinant of the differentials are positive.
Then all three have these properties, $\det_{{\mathcal N}(G,\varrho)}(i_n), \det_{{\mathcal N}(G,\varrho)}(p_n) > 0$ and
\begin{align*}
& \rho^{(2)}_\varrho(C_*) - \rho^{(2)}_\varrho(D_*) + \rho^{(2)}_\varrho(E_*) = \\
& \sum_{n \in \Z} (-1)^n \cdot \ln {\det}_{{\mathcal N}(G,\varrho)}(p_n) - \sum_{n \in \Z} (-1)^n \cdot \ln {\det}_{{\mathcal N}(G,\varrho)}(i_n).
\end{align*} \label{prop_torsion-1}
\item Let $C_*$ and $D_*$ be finite Hilbert ${\mathcal N}(G,\varrho)$-chain complexes of determinant class and $f_* \colon C_* \to D_*$ be a homotopy equivalence.
Suppose that one of these Hilbert ${\mathcal N}(G,\varrho)$-chain complexes has the properties that the $L^2$-Betti numbers vanish and that the determinant of the differentials are positive.
Then both $C_*$, $D_*$ and the mapping cone $\cone_*(f_*)$ have these properties and
\[
\rho^{(2)}_\varrho(C_*) - \rho^{(2)}_\varrho(D_*) = - \rho^{(2)}_\varrho(\cone_*(f_*)).
\]
Moreover, we have $\det_{{\mathcal N}(G,\varrho)}(f_n) > 0$. If $f_*$ is a chain isomorphism then
\[
\rho^{(2)}_\varrho(C_*) - \rho^{(2)}_\varrho(D_*) = \sum_{n \in \Z} (-1)^n \cdot \ln {\det}_{{\mathcal N}(G,\varrho)}(f_n).
\] \label{prop_torsion-2}
\item Let $C_*$ be a finite Hilbert ${\mathcal N}(G,\varrho)$-chain complex with a chain contraction $\gamma_*$.
Suppose that $\det_{{\mathcal N}(G,\varrho)}(c_n) > 0$ for all $n$.
Then the map
\[
(c_* + \gamma_*)_{odd} \colon \oplus_{n \in \Z} C_{2n+1} \to \oplus_{n \in \Z} C_{2n}
\]
is an isomorphisms with $\det_{{\mathcal N}(G,\varrho)}((c_* + \gamma_*)_{odd}) > 0$ and we get
\[
\rho^{(2)}_\varrho(C_*) = \ln {\det}_{{\mathcal N}(G,\varrho)}((c_* + \gamma_*)_{odd}).
\] \label{prop_torsion-3}
\end{enumerate}
\end{proposition}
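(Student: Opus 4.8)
The plan is to reduce every assertion to the untwisted case $\varrho = 1$ by means of the functor $\Psi_\varrho$ and then to quote the corresponding results from L\"uck's book.

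First I would recall the features of $\Psi_\varrho$ established above: it is an equivalence from the category of finitely generated Hilbert ${\mathcal N}(G,\varrho)$-modules to the category of finitely generated Hilbert ${\mathcal N}(G)$-modules which is the identity on underlying Hilbert spaces and on morphisms, rescaling only the left $G$-action by $g \mapsto \varrho(g)^{-1/2} \cdot g$. In particular $\Psi_\varrho$ leaves the differentials of a Hilbert ${\mathcal N}(G,\varrho)$-chain complex unchanged, so it sends a finite Hilbert ${\mathcal N}(G,\varrho)$-chain complex $C_*$ to a finite Hilbert ${\mathcal N}(G)$-chain complex $\Psi_\varrho(C_*)$ with the same differentials, and likewise for chain maps, chain homotopies, chain contractions, mapping cones and short exact sequences --- all of these notions are formulated purely in terms of the $\C G$-linear structure, which $\Psi_\varrho$ preserves. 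Concretely, $\Psi_\varrho(\cone_*(f_*)) = \cone_*(\Psi_\varrho(f_*))$, and $\Psi_\varrho$ commutes with the formation of the odd/even collapse $(c_* + \gamma_*)_{odd}$. Next I would record the invariance statements that make the reduction faithful: as already noted, $b^{(2)}_{\varrho,n}(C_*) = b^{(2)}_n(\Psi_\varrho(C_*))$, $\det_{{\mathcal N}(G,\varrho)}(f) = \det_{{\mathcal N}(G)}(\Psi_\varrho(f))$ and $\rho^{(2)}_\varrho(C_*) = \rho^{(2)}(\Psi_\varrho(C_*))$; hence the three hypotheses occurring in the proposition --- vanishing of the $L^2$-Betti numbers, positivity of $\det_{{\mathcal N}(G,\varrho)}(c_n)$, and being of determinant class --- hold for $C_*$ over ${\mathcal N}(G,\varrho)$ if and only if they hold for $\Psi_\varrho(C_*)$ over ${\mathcal N}(G)$.

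With these two observations in hand, each part follows by applying $\Psi_\varrho$ to the data, invoking the corresponding untwisted statement, and translating back:
\begin{enumerate}
\item apply $\Psi_\varrho$ to $0 \to C_* \to D_* \to E_* \to 0$ (still exact) and use \cite[Theorem 3.35 (1)]{Luc02};
\item apply $\Psi_\varrho$ to the homotopy equivalence $f_* \colon C_* \to D_*$ (still of determinant class) and use \cite[Theorem 3.35 (5)]{Luc02} together with \cite[Lemma 3.44]{Luc02};
\item apply $\Psi_\varrho$ to $(C_*,\gamma_*)$ (still a chain contraction) and use \cite[Lemma 3.41]{Luc02}.
\end{enumerate}
The conclusions about positivity of $\det_{{\mathcal N}(G,\varrho)}(i_n)$, $\det_{{\mathcal N}(G,\varrho)}(p_n)$, $\det_{{\mathcal N}(G,\varrho)}(f_n)$ and $\det_{{\mathcal N}(G,\varrho)}((c_* + \gamma_*)_{odd})$, as well as the three torsion identities, then follow by reading the untwisted conclusions through the dictionary $\det_{{\mathcal N}(G,\varrho)} = \det_{{\mathcal N}(G)} \circ \Psi_\varrho$ and $\rho^{(2)}_\varrho = \rho^{(2)} \circ \Psi_\varrho$.

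The step that requires the most care --- though it remains routine --- is the bookkeeping in the signed and summed expressions: one must check that the hypotheses and conclusions phrased with $\sum (-1)^n$, the distinction between $i_*$ and $p_*$, and the ``two out of three'' condition are transported verbatim by $\Psi_\varrho$. This is immediate once one observes that $\Psi_\varrho$ does not change which object sits in which chain degree. There is no genuine obstacle here; the mathematical content is entirely contained in the cited results of \cite{Luc02}, and $\Psi_\varrho$ merely relabels the group action.
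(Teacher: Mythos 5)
Your proposal is correct and matches the paper's own argument: the paper states this proposition as a direct consequence of \cite[Theorem 3.35 (1),(5), Lemma 3.41 and Lemma 3.44]{Luc02}, using precisely the identities $\det_{{\mathcal N}(G,\varrho)}(f) = \det_{{\mathcal N}(G)}(\Psi_\varrho(f))$ and $\rho^{(2)}_\varrho(C_*) = \rho^{(2)}(\Psi_\varrho(C_*))$ recorded just before it. Your additional care about $\Psi_\varrho$ preserving exactness, cones, and the odd/even collapse is exactly the (implicit) content of the paper's one-line reduction.
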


Let $X$ be a finite free $G$-CW-complex. We would like to define the weighted $L^2$-torsion of $X$ by
\[
\rho^{(2)}_\varrho(X) := \rho^{(2)}_\varrho\big(C^{(2)}_{\varrho,*}(X)\big)
\]
where $C^{(2)}_{\varrho,*}(X) := l^2(G,\varrho) \otimes_{\Z G} C^{cell}_*(X)$ is the Hilbert ${\mathcal N}(G,\varrho)$-chain complex introduced in Definition \ref{def-L2-Betti}.
But it turns out that this is not well-defined. The problem is that $C^{(2)}_{\varrho,*}(X)$ depends on the choice of a cellular basis for $C^{cell}_*(X)$. The cellular basis is only unique up to permutation and multiplication with $\pm g \in \Z G$ for $g \in G$.
Nevertheless, we have the following lemma.

\begin{lemma}
For different choices of a cellular basis the values $\rho^{(2)}_\varrho\big(C^{(2)}_{\varrho,*}(X)\big)$ differ by $\frac{1}{2} \cdot \ln(\varrho(g))$ for some $g \in G$ which does not depend on $\varrho$.
\end{lemma}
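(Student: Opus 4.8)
The plan is to track how the Hilbert $\mathcal N(G,\varrho)$-chain complex $C^{(2)}_{\varrho,*}(X)$ changes under an elementary change of cellular basis and then apply the chain-isomorphism formula for the $L^2$-torsion. Any two cellular $\Z G$-bases of $C^{cell}_*(X)$ differ, in each dimension $n$, by a monomial matrix $A_n \in M(\beta_n \times \beta_n;\Z G)$, i.e.\ a permutation matrix whose $1$'s have been replaced by elements of the form $\pm g$, $g \in G$ (this is exactly the indeterminacy ``permutation and multiplication with $\pm g$'' recorded before the lemma). Passing from the first basis to the second replaces $C^{(2),1}_{\varrho,*}(X)$ by $C^{(2),2}_{\varrho,*}(X)$, and the identity of $C^{cell}_*(X)$ induces a chain isomorphism $f_* \colon C^{(2),1}_{\varrho,*}(X) \to C^{(2),2}_{\varrho,*}(X)$ with $f_n = r^{(2)}_{A_n}$ (after possibly replacing $A_n$ by its inverse, which is again monomial). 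Since each $r^{(2)}_{A_n}$ is an isomorphism of Hilbert $\mathcal N(G,\varrho)$-modules, Proposition~\ref{prop_ex-seq}~(\ref{prop_ex-seq-2}) together with the multiplicativity of the Fuglede--Kadison determinant (Proposition~\ref{properties-det}~(\ref{properties-det-1})) shows that the hypotheses defining $\rho^{(2)}_\varrho$ (vanishing $b^{(2)}_{\varrho,n}$ and positive determinants of the differentials) hold for one complex if and only if they hold for the other, so the assertion makes sense.

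Next I would feed $f_*$ into Proposition~\ref{prop_torsion}~(\ref{prop_torsion-2}), which yields
\[
\rho^{(2)}_\varrho\big(C^{(2),1}_{\varrho,*}(X)\big) - \rho^{(2)}_\varrho\big(C^{(2),2}_{\varrho,*}(X)\big) = \sum_{n \in \Z} (-1)^n \cdot \ln {\det}_{{\mathcal N}(G,\varrho)}\big(r^{(2)}_{A_n}\big).
\]
It then remains to evaluate $\det_{{\mathcal N}(G,\varrho)}(r^{(2)}_{A_n})$. Factoring $A_n$ as a permutation matrix times the diagonal matrix of its nonzero entries $\pm g_{n,1},\ldots,\pm g_{n,\beta_n}$ and using multiplicativity again, the permutation part is a unitary and contributes $1$, while for a single entry one computes $(r^{(2)}_g)^* r^{(2)}_g = \varrho(g)\cdot\id$ on $l^2(G,\varrho)$ — or, equivalently, transports via the isometry $\Phi_\varrho$ to $r^{(2)}_{\sqrt{\varrho(g)}\,g}$ on $l^2(G)$, exactly as in the computation inside the proof of Proposition~\ref{prop_Alex-weighted} — so that $\det_{{\mathcal N}(G,\varrho)}(r^{(2)}_g) = \sqrt{\varrho(g)}$, the sign being irrelevant. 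Hence $\det_{{\mathcal N}(G,\varrho)}(r^{(2)}_{A_n}) = \sqrt{\varrho(h_n)}$, where $h_n := \prod_i g_{n,i} \in G$ is well defined because $\varrho$ lands in the abelian group $\R^{>0}$.

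Substituting, I obtain
\[
\rho^{(2)}_\varrho\big(C^{(2),1}_{\varrho,*}(X)\big) - \rho^{(2)}_\varrho\big(C^{(2),2}_{\varrho,*}(X)\big) = \tfrac{1}{2}\ln\varrho(g), \qquad g := \prod_{n \in \Z} h_n^{(-1)^n} \in G,
\]
and the point is that $g$ is assembled purely from the two cellular bases (the monomial matrices $A_n$), so it does not involve $\varrho$. Replacing $A_n$ by $A_n^{-1}$ only replaces $g$ by $g^{-1}$, which is still of the required form. There is no real difficulty here; the only things needing care are getting the conjugation convention right so that $f_n$ really is right multiplication by a monomial matrix, and carrying out the determinant computation for $r^{(2)}_g$ with the \emph{weighted} inner product and involution in mind — a term $\pm g$ acts as $\sqrt{\varrho(g)}$ times an isometry rather than as an isometry — both of which are routine given the apparatus already developed in the appendix.
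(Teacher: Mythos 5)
Your proposal is correct and follows essentially the same route as the paper: the change of cellular basis induces a chain isomorphism $f_*$ whose components are right multiplication by monomial matrices, each contributing $\sqrt{\varrho(g_n)}$ to the Fuglede--Kadison determinant, and Proposition~\ref{prop_torsion}~(\ref{prop_torsion-2}) then gives the alternating sum $\frac{1}{2}\ln\varrho(g)$ with $g$ the alternating product of the $g_n$. Your version merely spells out the monomial-matrix factorization and the transfer of the det-$L^2$-acyclicity hypotheses in more detail than the paper does.
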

\begin{proof}
Let $C_*, D_*$ be two ${\mathcal N}(G,\varrho)$-chain complexes of the shape $l^2(G,\varrho) \otimes_{\Z G} C^{cell}_*(X)$.
Since the cellular basis is unique up to permutation and multiplication with $\pm g \in \Z G$ for $g \in G$, we have a chain map $f_* \colon C_* \to D_*$ with the properties that $f_n$ is invertible and
\[
{\det}_{{\mathcal N}(G,\varrho)}(f_n) = {\det}_{{\mathcal N}(G,\varrho)}(g_n) = \sqrt{\varrho(g_n)}.
\]
for some elements $g_n \in G$ (compare Lemma \ref{properties-det}).
From \ref{prop_torsion} (\ref{prop_torsion-2}) we conclude
\[
\rho^{(2)}_\varrho(C_*) - \rho^{(2)}_\varrho(D_*) = \sum_{n \in \Z} (-1)^n \cdot \ln {\det}_{{\mathcal N}(G,\varrho)}(f_n) = \frac{1}{2} \cdot \ln(\varrho(g))
\]
with $g := g_0 \cdot {g_1}^{-1} \cdot g_2 \cdot {g_3}^{-1} \cdots$.
\end{proof}

\begin{definition}
Let $X$ be a finite free $G$-CW-complex and $\varrho \colon G \to \R^{>0}$ a group homomorphism.
For $x \in \R$ we set
\[
\varrho^x \colon G \to \R^{>0}, g \mapsto \varrho(g)^x.
\]
Notice that $\{ x \mapsto x \cdot \ln(\varrho(g)) \mid g \in G \}$ is a subgroup of the additive group $\map(\R,\R)$.
Suppose that $X$ is det-$L^2$-acyclic with respect to $\varrho$ i.e. $b^{(2)}_{\varrho^x,n}(X) = 0$ and $\det_{{\mathcal N}(G,\varrho^x)}(c^{(2)}_n(X)) > 0$ for all $n \in \Z$ and $x \in \R$.
We define the weighted $L^2$-torsion of $X$
\[
\rho^{(2)}_\varrho(X) \in \map(\R,\R) / \{ x \mapsto \frac{x}{2} \cdot \ln(\varrho(g)) \mid g \in G \}
\]
by $\rho^{(2)}_\varrho(X)(x) := [\rho^{(2)}_{\varrho^x}(C^{(2)}_{\varrho^x,*}(X))]$.
\end{definition}
Notice that $\rho^{(2)}_\varrho(X)(0) = \rho^{(2)}(X)$. We have $\rho^{(2)}_1(X)(x) = \rho^{(2)}(X)$ for all $x \in \R$.

\begin{example}
As in Example \ref{ex-S1-Betti} we consider the universal covering of $S^1$.
Let $\varrho \colon \pi_1(S^1) \to \R^{>0}$ be a group homomorphism.
The Hilbert ${\mathcal N}(G,\varrho)$-chain complex $C^{(2)}_{\varrho,*}(\widetilde{S^1})$ has the shape
\[
l^2(G,\varrho) \stackrel{\cdot (t-1)}{\longrightarrow} l^2(G,\varrho)
\]
with $\langle t \rangle = \pi_1(S^1) \cong \Z$.
We calculate
\begin{align*}
& \rho^{(2)}_{\varrho}(\widetilde{S^1})(x) = \ln {\det}_{{\mathcal N}(G,\varrho^x)}(r^{(2)}_{t-1}) = \ln {\det}_{{\mathcal N}(G)}(\Psi_{\varrho^x}(r^{(2)}_{t-1})) = \ln {\det}_{{\mathcal N}(G)}(r^{(2)}_{\Phi_{\varrho^x}(t-1)}) = \\
& = \ln {\det}_{{\mathcal N}(G)}(r^{(2)}_{\sqrt{\varrho^x(t)} \cdot t-1}) = \ln \max\{ \sqrt{\varrho^x(t)} ,  1\} = \max\{\frac{x}{2} \cdot \ln(\varrho(t)) , 0\}.
\end{align*}
(compare \cite[Example 3.22]{Luc02}).
\end{example}

We have a group homomorphism
\begin{eqnarray*}
A_\varrho \colon \Wh(G) & \to & \map(\R,\R) / \{ x \mapsto \frac{x}{2} \cdot \ln(\varrho(g)) \mid g \in G \} \\
{[M]} & \mapsto & x \mapsto \ln {\det}_{{\mathcal N}(G,\varrho^x)}(r^{(2)}_M)
\end{eqnarray*}
This is well-defined because of Proposition \ref{properties-det} (\ref{properties-det-1}), \ref{properties-det-2} and the equation
\[
\ln {\det}_{{\mathcal N}(G,\varrho^x)}(r^{(2)}_{\pm g}) = \frac{x}{2} \cdot \ln(\varrho(g)).
\]

The following proposition states the basic properties of the weighted $L^2$-torsion of finite free $G$-CW-complexes.
\begin{proposition} \label{properties-torsion}
\begin{enumerate}
\item Homotopy invariance \\ \label{properties-torsion-1}
Let $f \colon X \to Y$ be a $G$-homotopy equivalence of finite free $G$-CW-complexes.
Let $\varrho \colon \pi_1(X) \cong \pi_1(Y) \to \R^{>0}$ be a group homomorphism.
Suppose that $X$ or $Y$ is det-$L^2$-acyclic w.r.t. $\varrho$. Then both $X$ and $Y$ are det-$L^2$-acyclic w.r.t. $\varrho$ and
\[
 \rho^{(2)}_\varrho(Y) - \rho^{(2)}_\varrho(X) = A_\varrho(\tau(f))
\]
where $\tau(f) \in \Wh(G)$ denotes the Whitehead torsion of $f$.
\item Sum formula \\ \label{properties-torsion-2}
Consider the $G$-pushout of finite free $G$-CW-complexes such that $j_1$ is an inclusion of $G$-CW-complexes, $j_2$ is cellular and $X$ inherits its $G$-CW-complex structure from $X_0$, $X_1$ and $X_2$.
\begin{equation*}
\xymatrix{
X_0 \ar[r]^{j_1} \ar[d]_{j_2} & X_1 \ar[d]_{i_1} \\
X_2 \ar[r]^{i_2} & X }
\end{equation*}
Assume that three of the $G$-CW-complexes $X_0$, $X_1$, $X_2$ and $X$ are det-$L^2$-acyclic w.r.t. $\varrho \colon G \to \R^{>0}$.
Then all four $G$-CW-complexes $X_0$, $X_1$, $X_2$ and $X$ are det-$L^2$-acyclic w.r.t. $\varrho$ and
\[
\rho^{(2)}_\varrho(X) = \rho^{(2)}_\varrho(X_1) + \rho^{(2)}_\varrho(X_2) - \rho^{(2)}_\varrho(X_0).
\]
\item Poincar\'e duality \\ \label{properties-torsion-3}
Let $M$ be a cocompact free proper $G$-manifold without boundary of even dimension which is orientable.
Suppose that $G$ acts orientation preserving on $M$ and that $M$ is det-$L^2$-acyclic w.r.t. $\varrho \colon G \to \R^{>0}$.
Then
\[
\rho^{(2)}_\varrho(M) = 0.
\]
\item Restriction \\ \label{properties-torsion-4}
Let $X$ be a finite free $G$-CW-complex and let $H < G$ be a subgroup of finite index.
Let $\res_H(X)$ be the finite $H$-CW-complex obtained from $X$ by restricting the $G$-action to an $H$-action.
Then $X$ is det-$L^2$-acyclic w.r.t. $\varrho \colon G \to \R^{>0}$ if and only if $\res_H(X)$ is det-$L^2$-acyclic w.r.t. $\varrho|_H$, and in this case
\[
\rho^{(2)}_{\varrho|_H}(\res_H(X)) = [G:H] \cdot \rho^{(2)}_\varrho(X).
\]
\item Induction \\ \label{properties-torsion-5}
Let $H$ be a subgroup of $G$ and let $X$ be a finite free $H$-CW-complex.
Then the finite free $G$-CW-complex $G \times_H X$ is det-$L^2$-acyclic w.r.t. $\varrho \colon G \to \R^{>0}$ if and only if $X$ is det-$L^2$-acyclic w.r.t. $\varrho|_H$, and in this case
\[
\rho^{(2)}_\varrho(G \times_H X) = \rho^{(2)}_{\varrho|_H}(X).
\]
\item Positive Novikov--Shubin invariants and determinant class \\ \label{properties-torsion-6}
If $X$ is a finite free $G$-CW-complex with $b^{(2)}_{\varrho,n}(X) = 0$ and $\alpha^{(2)}_{\varrho,n}(X) > 0$ for all $n$, then $X$ is det-$L^2$-acyclic w.r.t. $\varrho$.
\item $S^1$-actions \\ \label{properties-torsion-7}
Let $X$ be a connected $S^1$-CW-complex of finite type and $\varrho \colon \pi_1(X) \to \R^{>0}$ a group homomorphism.
Suppose that for one orbit $S^1/H$ (and hence for all orbits) the inclusion into $X$ induces a map on $\pi_1$ with infinite image.
(In particular the $S^1$-action has no fixed points.)
Let $\widetilde X$ be the universal covering of $X$ with the canonical $\pi_1(X)$-action.
Then $\widetilde X$ is det-$L^2$-acyclic w.r.t. $\varrho$ and there exists $g \in G$ with
\[
\rho^{(2)}_\varrho(\widetilde{X})(x) = \max\{\frac{x}{2} \cdot \ln(\varrho(g)) , 0\}.
\]
\end{enumerate}
\end{proposition}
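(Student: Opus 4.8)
The plan is to transport the whole statement to the classical case $\varrho=1$ treated in \cite{Luc02}, using the functor $\Psi_\varrho$ and the identities $\det_{\mathcal N(G,\varrho^x)}(f)=\det_{\mathcal N(G)}(\Psi_{\varrho^x}(f))$ and $\rho^{(2)}_{\varrho^x}(C_*)=\rho^{(2)}(\Psi_{\varrho^x}(C_*))$ established above. For a finite free $G$-CW-complex $X$ the complex $\Psi_{\varrho^x}\big(C^{(2)}_{\varrho^x,*}(X)\big)$ is a finite Hilbert $\mathcal N(G)$-chain complex whose differentials are the right multiplications by $\Phi_{\varrho^x}$ of the integral incidence matrices, so every result of \cite{Luc02} about finite Hilbert $\mathcal N(G)$-chain complexes applies to it verbatim; the only change in bookkeeping is that the $\Z G$-basis ambiguity now produces the subgroup $\{x\mapsto\tfrac x2\ln\varrho(g)\mid g\in G\}$, which is exactly the target of $A_\varrho$.

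With this, items (\ref{properties-torsion-1})--(\ref{properties-torsion-6}) should follow quickly. For (\ref{properties-torsion-1}) I would note that a $G$-homotopy equivalence induces a chain homotopy equivalence, apply Proposition \ref{prop_torsion} (\ref{prop_torsion-2}), and identify the resulting difference $\sum_n(-1)^n\ln\det_{\mathcal N(G,\varrho^x)}(f_n)$ with $A_\varrho(\tau(f))$ via the definition of $A_\varrho$ and multiplicativity of Whitehead torsion. For (\ref{properties-torsion-2}) the pushout gives a short exact sequence of cellular chain complexes, so Proposition \ref{prop_torsion} (\ref{prop_torsion-1}) applies and the correction terms $\sum_n(-1)^n\big(\ln\det_{\mathcal N(G,\varrho)}(i_n)-\ln\det_{\mathcal N(G,\varrho)}(p_n)\big)$ vanish because $i_n,p_n$ come from inclusions and projections of $\Z G$-bases. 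For (\ref{properties-torsion-3}) I would feed $\Psi_{\varrho^x}$ of the Poincar\'e duality chain homotopy equivalence between $C^{(2)}_{\varrho^x,*}(M)$ and the cellular $L^2$-cochain complex, together with even-dimensionality, into the classical vanishing theorem. Items (\ref{properties-torsion-4}) and (\ref{properties-torsion-5}) come from Proposition \ref{properties-det} (\ref{properties-det-5}), (\ref{properties-det-6}) and the evident isomorphisms of restricted resp.\ induced cellular chain complexes, and (\ref{properties-torsion-6}) is immediate from Proposition \ref{properties-det} (\ref{properties-det-4}).

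The substantial work is (\ref{properties-torsion-7}); write $G=\pi_1(X)$. I would filter $X$ by its $S^1$-skeleta $X_0\subset X_1\subset\cdots$ and take preimages in $\widetilde X$, getting a filtration of $\widetilde X$ by free $G$-CW-subcomplexes in which each stage arises from the previous one by a $G$-pushout governed by (\ref{properties-torsion-2}). For an equivariant $n$-cell of $X$ with isotropy $H_j$, set $m_j:=|H_j|$; then the orbit $S^1/H_j$ is a circle, its inclusion into $X$ is injective on $\pi_1$ with image an infinite cyclic $\langle c_j\rangle\le G$, and the preimage of $S^1/H_j\times D^n$ in $\widetilde X$ is $G$-homotopy equivalent to $G\times_{\langle c_j\rangle}\widetilde{S^1}$ (the Whitehead obstruction lies in $\Wh(\Z)=0$). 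By (\ref{properties-torsion-5}) and the Example computing $\rho^{(2)}_\varrho(\widetilde{S^1})(x)=\max\{\tfrac x2\ln\varrho(t),0\}$, this piece is det-$L^2$-acyclic with $\rho^{(2)}_\varrho$-value $\max\{\tfrac x2\ln\varrho(c_j),0\}$; the attaching sphere $S^1/H_j\times S^{n-1}$ has preimage $G\times_{\langle c_j\rangle}(\widetilde{S^1}\times S^{n-1})$, and since $C_*(\widetilde{S^1}\times S^{n-1})\cong C_*(\widetilde{S^1})\oplus C_*(\widetilde{S^1})[n-1]$ over $\Z[\langle c_j\rangle]$ its $\rho^{(2)}_\varrho$-value is $\big(1+(-1)^{n-1}\big)\max\{\tfrac x2\ln\varrho(c_j),0\}$. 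Running the sum formula (\ref{properties-torsion-2}) through the filtration then shows $\widetilde X$ is det-$L^2$-acyclic and, modulo the indeterminacy, $\rho^{(2)}_\varrho(\widetilde X)(x)=\sum_j(-1)^{n_j}\max\{\tfrac x2\ln\varrho(c_j),0\}$, the sum over all equivariant cells.

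Finally I would collapse this finite combination. The standard orbit map $S^1\to X$ carries the generator of $\pi_1(S^1)$ to a \emph{central} element $c\in G$, of infinite order by hypothesis, with $c_j^{m_j}=c$ for all $j$; hence $\varrho(c_j)=\varrho(c)^{1/m_j}$, so $\max\{\tfrac x2\ln\varrho(c_j),0\}=\tfrac1{m_j}\max\{\tfrac x2\ln\varrho(c),0\}$ and $\rho^{(2)}_\varrho(\widetilde X)(x)=\big(\textstyle\sum_j\tfrac{(-1)^{n_j}}{m_j}\big)\max\{\tfrac x2\ln\varrho(c),0\}$ modulo the indeterminacy. Since $\varrho(c)^{1/m_j}\in\varrho(G)$ for every $j$ and $\varrho(G)\le\R^{>0}$ is a subgroup, a B\'ezout--$\lcm$ argument gives $\varrho(c)^{1/L}\in\varrho(G)$ for $L:=\lcm_j m_j$, and writing $\sum_j(-1)^{n_j}/m_j=N/L$ yields $\varrho(c)^{N/L}=\varrho(g)$ for some $g\in G$, which is the element sought. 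I expect this last step to be the main obstacle: one has to keep track not merely of which element of $\varrho(G)$ shows up, but also of the coefficient $\sum_j(-1)^{n_j}/m_j$ (an orbifold-Euler-characteristic-type invariant of $X/S^1$), and verify that after reduction modulo $\{x\mapsto\tfrac x2\ln\varrho(h)\}$ the whole expression is absorbed into a single term $\max\{\tfrac x2\ln\varrho(g),0\}$.
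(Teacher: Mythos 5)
Your handling of items (\ref{properties-torsion-1})--(\ref{properties-torsion-6}) is essentially the paper's own argument: transport everything to the case $\varrho=1$ via $\Psi_\varrho$, then invoke Proposition~\ref{prop_torsion} for (\ref{properties-torsion-1})--(\ref{properties-torsion-2}) and Proposition~\ref{properties-det} (\ref{properties-det-4})--(\ref{properties-det-6}) for (\ref{properties-torsion-4})--(\ref{properties-torsion-6}). Two small inaccuracies that do not affect the substance: for (\ref{properties-torsion-1}) the quantity $\sum_n(-1)^n\ln\det_{{\mathcal N}(G,\varrho)}(f_n)$ only makes sense when $f_*$ is a chain isomorphism, and the general identification with $A_\varrho(\tau(f))$ goes through the mapping cone and Proposition~\ref{prop_torsion} (\ref{prop_torsion-3}); for (\ref{properties-torsion-4}) the comparison isomorphism of restricted chain complexes is not an isometry, and the paper disposes of the resulting correction term by observing that it is $\chi(G\setminus X)\cdot\ln\det_{{\mathcal N}(H,\varrho|_H)}(\alpha)$ with $\chi(G\setminus X)=0$. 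Your skeletal induction for (\ref{properties-torsion-7}) is likewise the argument the paper points to (the proof of L\"uck's Theorem~3.105, run with $\rho^{(2)}_\varrho(\widetilde{S^1})(x)=\max\{\tfrac x2\ln\varrho(t),0\}$ in place of $0$), and your cell-by-cell bookkeeping, including the factor $(1+(-1)^{n-1})$ for the attaching tori and the relation $c_j^{m_j}=c$, is correct.

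The genuine gap is exactly the step you flag at the end, and it cannot be closed in the form stated. Your induction yields $\rho^{(2)}_\varrho(\widetilde X)(x)=N\cdot\max\{\tfrac x2\ln\varrho(c),0\}$ with $N=\sum_j(-1)^{n_j}/m_j$ the orbifold Euler characteristic of $X/S^1$ and $c$ the central fiber class. The identity $N\cdot\max\{y,0\}=\max\{Ny,0\}$ holds only for $N\geq 0$; when $N<0$ and $\varrho(c)\neq 1$ (e.g.\ $X=S^1\times\Sigma_2$ with $N=-2$, or any Seifert manifold over a hyperbolic base orbifold), the function $N\cdot\max\{\tfrac x2\ln\varrho(c),0\}$ is concave and nonlinear, whereas $\max\{\tfrac x2\ln\varrho(g),0\}$ plus any element of the indeterminacy subgroup $\{x\mapsto\tfrac x2\ln\varrho(h)\}$ is convex; so no choice of $g$ and no B\'ezout manipulation of exponents can produce the claimed form. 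Note that your explicit formula is the one consistent with the rest of the paper: for the trefoil complement (a Seifert manifold with $\chi^{orb}=-1/6$ and $\phi(c)=6$) it gives $\rho^{(2)}_\varrho(\widetilde{M_K})(x)=-\max\{x,0\}$, which is exactly what one reads off from $\Delta^{(2)}_K(t)=\max\{|t|,1\}^2$ and Corollary~\ref{cor_Alex-torsion}, and which is \emph{not} of the form $\max\{\tfrac x2\ln\varrho(g),0\}$ modulo the indeterminacy. So as a proof of (\ref{properties-torsion-7}) as literally stated your proposal is incomplete, but the obstruction is in the statement rather than in your method: the conclusion should read $d\cdot\rho^{(2)}_\varrho(\widetilde X)(x)=\pm\max\{\tfrac x2\ln\varrho(g),0\}$ (equivalently, allow a rational, possibly negative, coefficient). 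The paper's own one-line reduction to L\"uck's Theorem~3.105 glosses over the same point: in the weighted setting each orbit contributes a nonzero element of a set of convex functions that is not closed under negation, so the induction does not stay inside the asserted class.
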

\begin{proof}
The proof is essentially the same as the proofs of \cite[Theorem 3.93 and Theorem 3.105]{Luc02}.
\begin{enumerate}
\item This follows from Proposition \ref{prop_torsion} (\ref{prop_torsion-2}) and (\ref{prop_torsion-3}).
\item This follows from Proposition \ref{prop_torsion} (\ref{prop_torsion-1}) applied to the exact sequence of Hilbert ${\mathcal N}(G,\varrho)$-chain complexes
\[
0 \to C^{(2)}_{\varrho,*}(X_0) \longrightarrow C^{(2)}_{\varrho,*}(X_1) \oplus C^{(2)}_{\varrho,*}(X_2) \longrightarrow C^{(2)}_{\varrho,*}(X) \to 0.
\]
\item The proof is analogous to the proof of \cite[Theorem 3.93 (3)]{Luc02}.
\item Fix a cellular $\Z G$-basis for $C^{cell}_*(X)$ and a right transversal $T$ of $H$ in $G$. Using the $\Z H$-isomorphism $\oplus_{g \in T} \Z H \to \Z G, (x_g) \mapsto \sum_{g \in T} x_g \cdot g$ we obtain a cellular $\Z H$-basis for $C^{cell}_*(\res_H(X))$.
The isomorphism
\[
\alpha \colon \oplus_{g \in T} l^2(H,\varrho|_H) \to l^2(G,\varrho), (v_g) \mapsto \sum_{g \in T} v_g \cdot g
\]
induces a chain isomorphism $f_* \colon C^{(2)}_{\varrho|_H,*}(\res_H(X)) \to \res_H(C^{(2)}_{\varrho,*}(X))$ with $f_n = \diag(\alpha)$.
Proposition \ref{prop_torsion} (\ref{prop_torsion-2}) tells us that
\[
\rho^{(2)}_{\varrho|_H}(\res_H(X)) - \rho^{(2)}_{\varrho|_H}\big(\res_H(C^{(2)}_{\varrho,*}(X))\big) = \sum_{n \in \Z} (-1)^n \cdot \ln {\det}_{{\mathcal N}(H,\varrho|_H)}(f_n).
\]
Proposition \ref{properties-det} (\ref{properties-det-5}) implies
\[
\rho^{(2)}_{\varrho|_H}\big(\res_H(C^{(2)}_{\varrho,*}(X))\big) = [G:H] \cdot \rho^{(2)}_\varrho(X).
\]
We obtain
\begin{align*}
& \rho^{(2)}_{\varrho|_H}(\res_H(X)) - [G:H] \cdot \rho^{(2)}_\varrho(X) = \sum_{n \in \Z} (-1)^n \cdot \ln {\det}_{{\mathcal N}(H,\varrho|_H)}(f_n) = \\
& \sum_{n \in \Z} (-1)^n \cdot \ln {\det}_{{\mathcal N}(H,\varrho|_H)}(\alpha)^{\beta_n(G \setminus X)} = \chi(G \setminus X) \cdot \ln {\det}_{{\mathcal N}(H,\varrho|_H)}(\alpha) = 0
\end{align*}
where $\beta_n(G \setminus X)$ is the number of $n$-cells of $G \setminus X$ and $\chi(G \setminus X)$ is the Euler characteristic of the finite CW-complex $G \setminus X$.
Notice that $\chi(G \setminus X) = \sum_{n \geq 0} (-1)^n \cdot b^{(2)}_{\varrho,n}(X) = 0$ (see Proposition \ref{properties-Betti} (\ref{properties-Betti-2})).
\item Fix a cellular $\Z H$-basis for $C^{cell}_*(X)$. We obtain an induced cellular $\Z G$-basis for $C^{cell}_*(G \times_H X) = \Z G \otimes_{\Z H} C^{cell}_*(X)$.
Now the statement follows from the equation $C^{(2)}_{\varrho,*}(G \times_H X) = \ind_G(C^{(2)}_{\varrho|_H,*}(X))$ and Proposition \ref{properties-det} (\ref{properties-det-6}).
\item This follows from Proposition \ref{properties-det} (\ref{properties-det-4}).
\item We consider
\[
[\rho^{(2)}_\varrho(\widetilde{X})] \in \map(\R,\R) / \{ x \mapsto \max\{ \frac{x}{2} \cdot \ln(\varrho(g_1)) , \frac{x}{2} \cdot \ln(\varrho(g_2)) \} \mid g_1, g_2 \in G \}
\]
and have to show $[\rho^{(2)}_\varrho(\widetilde{X})] = 0$.
Since $[\rho^{(2)}_\varrho(\widetilde{S^1})] = 0$, we can apply the same arguments as in the proof of \cite[Theorem 3.105]{Luc02}.
\end{enumerate}
\end{proof}

\begin{remark} \label{rem_sum-formula}
We are mostly interested in $G$-CW-complexes given by the universal covering $\widetilde{X}$ of a finite CW-complex $X$ (here $G = \pi_1(X)$).
In this context the sum formula (Proposition \ref{properties-torsion} (\ref{properties-torsion-2})) carries over to the following statement:\\
Consider the pushout of finite CW-complexes such that $j_1$ is an inclusion of CW-complexes, $j_2$ is cellular and $X$ inherits its CW-complex structure from $X_0$, $X_1$ and $X_2$.
\begin{equation*}
\xymatrix{
X_0 \ar[r]^{j_1} \ar[d]_{j_2} & X_1 \ar[d]_{i_1} \\
X_2 \ar[r]^{i_2} & X }
\end{equation*}
Let $\varrho \colon \pi_1(X) \to \R^{>0}$ be a group homomorphism. We obtain induced group homomorphisms $\varrho_i \colon \pi_1(X_i) \to \pi_1(X) \stackrel{\varrho}{\longrightarrow} \R^{>0}$ ($i=0,1,2$).
Assume that $\widetilde{X_i}$ is det-$L^2$-acyclic w.r.t. $\varrho_i$ for $i=0,1,2$.
Then $\widetilde{X}$ is det-$L^2$-acyclic w.r.t. $\varrho$ and
\[
\rho^{(2)}_\varrho(\widetilde{X}) = \rho^{(2)}_{\varrho_1}(\widetilde{X_1}) + \rho^{(2)}_{\varrho_2}(\widetilde{X_2}) - \rho^{(2)}_{\varrho_0}(\widetilde{X_0}).
\]
\end{remark}

A way to construct out of
\[
\rho^{(2)}_\varrho(X) \in \map(\R,\R) / \{ x \mapsto \frac{x}{2} \cdot \ln(\varrho(g)) \mid g \in G \}
\]
a simplified invariant with values in $\R$ is described in the following definition.

\begin{definition}
Let $X$ be a finite free $G$-CW-complex and $\varrho \colon G \to \R^{>0}$ a group homomorphism. We define the simplified weighted $L^2$-torsion of $X$ by
\[
\tilde{\rho}^{(2)}_\varrho(X) := \frac{1}{2} \cdot \big( \rho^{(2)}_\varrho(X)(1) + \rho^{(2)}_\varrho(X)(-1) \big) \in \R
\]
\end{definition}

For $\varrho = 1$ we obtain $\tilde{\rho}^{(2)}_\varrho(X) = \rho^{(2)}(X)$.
Obviously, all statements of Theorem \ref{properties-torsion} carry over to this simplified weighted $L^2$-torsion.
%Concerning $S^1$-action we should mention that $\rho^{(2)}_\varrho(\widetilde{X})(x) = \max\{\frac{x}{2} \cdot \ln(\varrho(g)) , 0\}$ implies $\tilde{\rho}^{(2)}_\varrho(\widetilde{X}) = \frac{1}{4} \cdot |\ln(\varrho(g))|$. 

\end{document}